\numberwithin{equation}{section}
\begin{document}

\title{The energy-critical quantum harmonic oscillator}
\author{Casey Jao}
\begin{abstract}
  We consider the energy critical nonlinear Schr\"{o}dinger equation
  in dimensions $d \ge 3$ with a harmonic oscillator potential $V(x) =
  \tfr{1}{2} |x|^2$. When the nonlinearity is defocusing, we prove
  global wellposedness for all initial data in the energy space
  $\Sigma$, consisting of all functions $u_0$ such that both $\nabla
  u_0$ and $x u_0$ belong to $L^2$. This result extends a theorem of
  Killip-Visan-Zhang \cite{kvz_quadratic_potentials}, which treats the
  radial case. For the focusing problem, we obtain global
  wellposedness for all data satisfying an analogue of the usual size
  restriction in terms of the ground state $W$. The proof uses the
  concentration compactness variant of the induction on energy
  paradigm. In particular, we develop a linear profile decomposition
  adapted to the propagator $\exp[ it(\tfr{1}{2}\Delta -
  \tfr{1}{2}|x|^2)]$ for bounded sequences in $\Sigma$.
\end{abstract}
\maketitle
\tableofcontents
\section{Introduction}

We study the initial value problem for the energy-critical nonlinear 
Schr\"{o}dinger
equation on $\mf{R}^d, \ d \ge 3$, with a harmonic oscillator potential:
\begin{equation}
\label{eqn:nls_with_quadratic_potential}
\left\{ \begin{array}{c} i\partial_t u = (-\tfr{1}{2}\Delta + \tfr{1}{2}|x|^2) u 
+ \mu |u|^{\fr{4}{d-2}} u,  \quad \mu = \pm 1, \\[2mm]
u(0) = u_0 \in \Sigma(\mf{R}
^d).
\end{array}\right.
\end{equation}
The equation is \emph{defocusing} if $\mu = 1$ and \emph{focusing} if
$\mu = -1$. Solutions to this PDE conserve energy, which is defined as
\begin{equation}
E(u(t)) = \int_{\mf{R}^d} \left[\tfr{1}{2} |\nabla u(t)|^2 + \tfr{1}{2} |x|^2
|u(t)|^2 + \tfr{d-2}{d} \mu |u(t)|^{\fr{2d}{d-2}} \right] dx = E(u(0)).
\end{equation}
Indeed \eqref{eqn:nls_with_quadratic_potential} can be viewed as
defining the (formal) Hamiltonian flow of $E$. 
The term ``energy-critical'' refers to the fact that
if we ignore the $|x|^2/2$ term in the equation and the energy, the scaling 
\begin{equation}
\label{eqn:energy-critical_scaling}
u(t, x) \mapsto u^\lambda(t, x) := \lambda^{-\fr{2}{d-2}}
u(\lambda^{-2} t, \lambda^{-1} x)
\end{equation}
preserves both the equation and the energy. 
We take our initial data in the weighted Sobolev space~$\Sigma$, which is 
the natural space of functions associated with the energy functional. This space 
is equipped with the norm
\begin{equation}
\|f\|_{\Sigma}^2 = \| \nabla f\|_{L^2}^2 + \| xf\|_{L^2}^2 = \|
f\|_{\dot{H}^1}^2 + \|f\|_{L^2(|x|^2 \, dx)}^2  \label{eqn:defn_of_Sigma_norm}
\end{equation}
We will frequently employ 
the notation
\[
 H = -\tfr{1}{2}\Delta + \tfr{1}{2}|x|^2, \quad F(z) = \mu |z|^{\fr{4}{d-2}} z.
\] 
Let us first clarify what we mean by a solution.
\begin{define_nonum}
A (strong) \emph{solution} to \eqref{eqn:nls_with_quadratic_potential} is a function 
$u: I \times \mf{R}^d \to 
\mf{C}$ that belongs to $C^0_t(K; \Sigma)$ for 
every compact interval $K \subset I$, and that satisfies the Duhamel formula
\begin{equation}
u(t) = e^{-itH} u(0) - i \int_0^t e^{-i(t-s)H} F(u(s)) \, ds \quad
\text{for all} \quad t \in I. \label{eqn:definition_of_solution}
\end{equation}
The hypothesis on $u$ implies that $F(u) \in C^0_{t, loc} 
L^{\fr{2d}{d+2}}_x(I \times \mf{R}^d)$. Consequently,the right side above is well-defined, at least as a 
weak integral of tempered distributions. 
\end{define_nonum}

Equation \eqref{eqn:nls_with_quadratic_potential} and its variants
\[
i\partial_t u = (-\tfr{1}{2}\Delta + V)u + F(u), \quad V = \pm \tfr{1}{2}|x|^2, 
\quad F(u) = \pm |u|^p u, \quad p > 0
\]
have received considerable attention, especially in the
energy-subcritical regime $p < 4/(d-2)$. The equation with a
confining potential $V = |x|^2/2$ has been used to model
Bose-Einstein condensates in a trap (see \cite{zhang_bec},
for example). Let us briefly review some of the mathematical
literature. 

Carles \cite{carles_confining_oscillator},
\cite{carles_repulsive_oscillator} proved global
wellposedness for a defocusing nonlinearity $F(u) = |u|^p u, \ p <
4/(d-2)$ when the potential $V(x) = |x|^2/2$ is either
confining or repulsive, and obtained various wellposedness
and blowup results for a focusing nonlinearity $F(u) = -|u|^p
u$. In \cite{carles_sign-indefinite_oscillator}, he also studied \cite{carles_sign-indefinite_oscillator} the
case of an anisotropic harmonic oscillator with $V(x) = \sum_j \delta_j
x_j^2/2, \ \delta_j \in \{1, 0, -1\}$.

There has also been interest in more general potentials. The paper
\cite{oh} proves long-time existence in the presence of a focusing, mass-subcritical
nonlinearity $F(u) = - |u|^p u, \ p < 4/d$ when $V(x)$ is merely assumed to grow 
subquadratically (by which we mean
$\partial^\alpha V \in L^\infty$ for all $|\alpha| \ge 2$). More
recently, Carles \cite{carles_time-dependent_potential} considered
\emph{time-dependent} subquadratic potentials $V(t, x)$. Taking
initial data in $\Sigma$, he
established global existence and uniqueness when $4/d \le p < 4/(d-2)$ 
for the defocusing nonlinearity and $0 < p < 4/d$ in the focusing case.

This paper studies the energy-critical problem $p = 4/(d-2)$. While the critical
equation still admits a local theory, the duration of local existence
obtained by the usual fixed-point argument depends on the profile
and not merely on the norm of the initial data $u_0$. Therefore, one
cannot pass directly from local wellposedness to global
wellposedness using conservation laws as in the subcritical case. This issue is 
most evident
if we temporarily discard the potential and consider the equation
\begin{equation}
\label{eqn:ckstt_eqn}
i\partial_t u = -\tfr{1}{2}\Delta u + \mu |u|^{\fr{4}{d-2}}u, \quad u(0) = u_0
\in \dot{H}^1(\mf{R}^d), \quad d \ge 3,
\end{equation}
which has the Hamiltonian
\[
E_{\Delta}(u) = \int \tfr{1}{2} |\nabla u|^2 + \mu\tfr{d-2}{d}
|u|^{\fr{2d}{d-2}} \, dx.
\]
We will refer to this equation in the sequel as the
``potential-free''', ``translation-invariant'', or ``scale-invariant''
problem. Since the spacetime scaling
\eqref{eqn:energy-critical_scaling} preserves both the equation and
the $\dot{H}^1$ norm of the initial data, the time of existence
guaranteed by the local wellposedness theory cannot depend merely on
$\| u_0\|_{\dot{H}^1}$. One cannot iterate the local existence
argument to obtain global existence because with each iteration the
solution could conceivably become more concentrated in space while
remaining bounded in $\dot{H}^1$, so that the duration of local
existence could decrease with each iteration.  The scale invariance
makes the analysis of \eqref{eqn:ckstt_eqn} highly nontrivial.

We mention equation \eqref{eqn:ckstt_eqn} because the original
equation increasingly resembles \eqref{eqn:ckstt_eqn} as the initial
data concentrates at a point; see
sections~\ref{subsection:convergence}
and~\ref{section:localized_initial_data} for more precise statements
concerning this sort of limit. Hence, one would expect the essential
difficulties in the energy-critical NLS to also manifest themselves in
the energy-critical harmonic oscillator. Understanding the
scale-invariant problem is therefore an important step toward
understanding the harmonic oscillator. The last fifteen years have
witnessed intensive study of the former, and the following conjecture
has been verified in all but a few cases:
\begin{conjecture}
\label{conjecture:ckstt}
  When $\mu = 1$, solutions to \eqref{eqn:ckstt_eqn} exist
  globally and scatter. That is, for any $u_0 \in \dot{H}^1(\mf{R}^d)$,
  there exists a unique global solution $u : \mf{R} \times \mf{R}^d \to
  \mf{C}$ to \eqref{eqn:ckstt_eqn} with $u(0) = u_0$, and
  this solution satisfies a spacetime bound
\begin{equation}
\label{eqn:ckstt_spacetime_bound}
S_{\mf{R}}(u) := \int_{\mf{R}} \int_{\mf{R}^d}
|u(t,x)|^{\fr{2(d+2)}{d-2}} \, dx \, dt \le C(E_{\Delta}(u_0)) < \infty.
\end{equation}
Moreover, there exist functions $u_{\pm} \in \dot{H}^1(\mf{R}^d)$ such that
\[
\lim_{t \to \pm \infty} \|u(t) - e^{\pm \fr{it\Delta}{2}} u_{\pm}
\|_{\dot{H}^1} = 0,
\]
and the correspondences $u_0 \mapsto u_{\pm}(u_0)$ are homeomorphisms
of $\dot{H}^1$. 

  When $\mu = -1$, one also has global
  wellposedness and scattering provided that 
\[
E_{\Delta}(u_0) < E_{\Delta}(W), \quad \| \nabla u_0\|_{L^2} < \|
\nabla W\|_{L^2},
\]
where the \emph{ground state}
\[
W(x) = \tfr{1}{ (1 + \fr{2|x|^2}{d(d-2)} )^{\fr{d-2}{2}}} \in
\dot{H}^1 (\mf{R}^d)
\]
solves the elliptic equation $\tfr{1}{2}\Delta + |W|^{\fr{4}{d-2}} W =
0$. 
\end{conjecture}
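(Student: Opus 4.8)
\medskip

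\noindent The plan for establishing Conjecture~\ref{conjecture:ckstt} is to run the concentration--compactness and rigidity program of Kenig and Merle for the scale-invariant equation~\eqref{eqn:ckstt_eqn}, in the streamlined form developed by Killip, Visan and their collaborators; this program has by now been carried through in all but a handful of cases. First I would assemble the local theory: Strichartz estimates for $e^{it\Delta/2}$, local wellposedness in $\dot H^1$, small-data global wellposedness and scattering, and---most importantly---a long-time perturbation (stability) lemma, to the effect that any approximate solution with small error and finite scattering norm is shadowed by an actual solution with the same asymptotic behavior. A standard consequence is that if $S_I(u) < \infty$ then $u$ extends to all of $\mf{R}$ and scatters in $\dot H^1$. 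One then argues by contradiction. In the defocusing case, if the conjecture fails there is a least energy $E_c \in (0,\infty)$ such that every solution with $E_{\Delta}(u_0) < E_c$ obeys a uniform spacetime bound $S_{\mf{R}}(u) \le C(E_{\Delta}(u_0))$, while this fails at energy exactly $E_c$. In the focusing case the analogous threshold satisfies $E_c \le E_{\Delta}(W)$; here one first uses the variational (Aubin--Talenti) characterization of $W$ as the Sobolev optimizer to show that on the region $E_{\Delta}(u_0) < E_{\Delta}(W),\ \|\nabla u_0\|_{L^2} < \|\nabla W\|_{L^2}$ the energy is coercive and $\|\nabla u(t)\|_{L^2}$ stays below $\|\nabla W\|_{L^2}$ for the whole lifespan, so that the defocusing-style argument applies verbatim there.

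Next I would extract a minimal-energy obstruction. The tool is a linear profile decomposition for bounded sequences in $\dot H^1$ adapted to $e^{it\Delta/2}$: passing to a subsequence, any bounded sequence splits into a sum of profiles $e^{it_n^j\Delta/2}$ translated and rescaled by symmetry parameters $(x_n^j,\lambda_n^j)$ that are pairwise asymptotically orthogonal, plus a tail that vanishes asymptotically in every relevant Strichartz norm, with the energy decoupling across the profiles. Applying this to a sequence of initial data whose solutions have scattering norm tending to infinity while their energies approach $E_c$, replacing each linear profile by its nonlinear counterpart, and invoking the stability lemma together with the minimality of $E_c$, one shows that exactly one nonlinear profile can survive. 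This produces a maximal-lifespan solution $u$ with $E_{\Delta}(u) = E_c$ and $S_{\ge 0}(u) = S_{\le 0}(u) = \infty$ whose orbit is precompact in $\dot H^1$ modulo the symmetries: there exist $N(t) > 0$ and $x(t) \in \mf{R}^d$ such that $\bigl\{ N(t)^{-(d-2)/2} u(t, N(t)^{-1}x + x(t)) : t \in I \bigr\}$ is precompact in $\dot H^1$.

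The remaining, decisive step is rigidity: such almost periodic solutions cannot exist. After normalizing $N(t)$, one reduces to three scenarios according to its behavior---a finite-time blowup solution, a soliton-like solution with $N(t) \equiv 1$, and a low-to-high frequency cascade with $N(t) \to \infty$ on a half-line. The finite-time case is excluded by a ``no waste of energy'' argument: evaluating the Duhamel formula toward the blowup time and using compactness forces $u_0 \in L^2$ with vanishing mass, whence $u \equiv 0$ by conservation of mass. The cascade is excluded because $N(t) \to \infty$ together with compactness (which first has to be upgraded to finite mass) forces $\|u(t)\|_{L^2}^2 \lesssim N(t)^{-2}\|\nabla u(t)\|_{L^2}^2 \to 0$, again contradicting conservation of mass; for non-radial data in low dimensions this step is instead handled via a frequency-localized interaction Morawetz estimate. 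The soliton is excluded by a virial / Lin--Strauss--Morawetz inequality, which for an almost periodic solution with $N(t) \equiv 1$ bounds $S_I(u)$ in terms of the energy uniformly in $I$, contradicting $S_{\mf{R}}(u) = \infty$; in the focusing regime one checks that the Morawetz quantity keeps a favorable sign using the coercivity established above. That $E_{\Delta}(W)$ is the sharp threshold is clear, since $W$ itself is a global solution with infinite scattering norm.

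I expect the genuine obstacle to be the construction of the critical element, and within it the nonlinear profile decomposition. Profiles may live at wildly different spatial scales $\lambda_n^j$, and for a profile concentrating at a small scale the finite-time local theory corresponds, after rescaling, to an arbitrarily long time interval; one must therefore evolve such profiles either by appealing to the inductive hypothesis at strictly smaller energy or by treating them perturbatively, and then prove that the resulting nonlinear flows stay asymptotically orthogonal---which leans heavily on a sharp stability theory. The second real difficulty is the Morawetz step in dimensions $d = 3, 4$ and for non-radial data, where the clean interaction Morawetz inequality is unavailable and one needs the long-time and frequency-localized Morawetz estimates of Colliander--Keel--Staffilani--Takaoka--Tao and of Dodson, together with incoming/outgoing decompositions. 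The local theory, the variational analysis of $W$, and the reduction to the three scenarios for $N(t)$ are, by comparison, routine.
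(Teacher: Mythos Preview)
The statement you are attempting to prove is labeled in the paper as a \emph{conjecture}, not a theorem, and the paper does not supply a proof. What the paper does is record in Theorem~\ref{thm:ckstt} which cases of Conjecture~\ref{conjecture:ckstt} are currently known, and the ``proof'' of that theorem consists entirely of citations to the literature (Bourgain, CKSTT, Ryckman--Visan, Visan for the defocusing problem; Kenig--Merle and Killip--Visan for the focusing problem). The focusing, non-radial cases in dimensions $d = 3, 4$ remain open, which is precisely why the main result of the paper (Theorem~\ref{thm:main_theorem}) is stated conditionally on Conjecture~\ref{conjecture:ckstt}. So there is no ``paper's own proof'' to compare your proposal against.

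That said, your outline is a reasonable high-level summary of the Kenig--Merle concentration-compactness/rigidity road map as implemented in the cited references. A couple of minor inaccuracies: the frequency-localized interaction Morawetz estimate is used to rule out the soliton-like scenario, not the cascade (the cascade is handled via the additional regularity/negative Sobolev arguments, and for non-radial $d=3,4$ defocusing the long-time Strichartz estimates of Dodson's style enter in the focusing work of Killip--Visan); and your description of the cascade argument glosses over the step of proving that the critical element actually lies in $L^2$ (which uses the double Duhamel trick in high dimensions and more delicate arguments otherwise). But the larger point is that if your outline could be made to work uniformly in all dimensions and without radial symmetry in the focusing case, you would be resolving an open problem rather than reproducing anything in this paper.
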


\begin{thm}
  \label{thm:ckstt}
Conjecture \ref{conjecture:ckstt} holds for the defocusing
equation. For the focusing equation, the conjecture holds for radial
initial data when $d \ge 3$, and for all initial data when $d \ge 5$. 
\end{thm}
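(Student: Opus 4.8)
The plan is to treat Theorem~\ref{thm:ckstt} as a synthesis of the existing literature rather than to reprove it from scratch: the defocusing cases $d=3$, $d=4$, and $d\ge 5$ are due respectively to Colliander--Keel--Staffilani--Takaoka--Tao, to Ryckman--Visan, and to Visan; the focusing radial cases $d=3,4,5$ are due to Kenig--Merle; and the focusing case for all data in $d\ge 5$ is due to Killip--Visan, with an alternative route through Dodson's long-time Strichartz estimates. Accordingly I would present the architecture common to these proofs, which is precisely the concentration-compactness induction-on-energy paradigm that this paper will later adapt to the harmonic oscillator, and then attach the relevant citations.

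First I would record the small-data theory and the stability/perturbation lemma: these follow from the Strichartz estimates for $e^{it\Delta/2}$ together with a contraction-mapping argument, and they yield local wellposedness in $\dot{H}^1$, persistence of regularity, and a blowup criterion which asserts that a solution extends globally and scatters as soon as its scattering norm $S_I(u)$ is finite. The content of the theorem is therefore the bound $S_{\mf{R}}(u)\le C(E_\Delta(u_0))$; in the focusing case one additionally uses $\|\nabla u_0\|_{L^2}<\|\nabla W\|_{L^2}$, which via the sharp Sobolev/Gagliardo--Nirenberg inequality makes $E_\Delta$ coercive and keeps $\|\nabla u(t)\|_{L^2}$ uniformly below $\|\nabla W\|_{L^2}$ along the flow. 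If this bound fails, a linear profile decomposition for $e^{it\Delta/2}$ applied to a sequence of near-counterexamples, combined with the stability lemma, produces a minimal blowup solution $u_\ast$ that is almost periodic modulo the symmetry group (scaling, spatial translations, and, absent radial symmetry, Galilean boosts): there are $N(t)>0$ and $x(t)\in\mf{R}^d$ so that $\{N(t)^{-(d-2)/2}\,u_\ast(t,N(t)^{-1}(\cdot-x(t)))\}_{t}$ is precompact in $\dot{H}^1$.

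The next step is to reduce to a short list of enemies according to the behavior of $N(t)$ --- the self-similar case, the frequency-cascade case, and the soliton-like case $N(t)\equiv 1$ --- and to exclude each. For the defocusing problem this uses conservation of mass together with a frequency-localized interaction Morawetz inequality, which carries a favorable sign and forces $u_\ast\equiv 0$; controlling the error terms when $N(t)$ can grow requires harvesting negative regularity, or, in higher dimensions, the refined long-time Strichartz estimates. For the focusing problem one instead runs a truncated virial/Morawetz argument in the style of Kenig--Merle: the sub-$W$ sign conditions make the localized virial quantity strictly monotone, contradicting almost periodicity and pinning down the soliton case, which is then eliminated by a unique-continuation/rigidity argument identifying $u_\ast$ with a rescaled $W$ --- impossible since $W$ does not satisfy $E_\Delta(W)<E_\Delta(W)$.

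The main obstacle, and the source of the dimensional and symmetry hypotheses in the focusing statement, is controlling the spatial center $x(t)$ and ruling out a traveling soliton when the data is not radial: for $\mu=-1$ the interaction Morawetz inequality has the wrong sign, so in $d=3,4$ one imposes radial symmetry to suppress translations and boosts outright, whereas in $d\ge 5$ the extra dispersion permits a double-Duhamel (``in/out'') decomposition and the additional Strichartz estimates needed to treat the general case. I would not reproduce these arguments here; instead I would cite the works above and, in the sections that follow, exhibit the analogue of each ingredient --- the profile decomposition, the stability theory, and the rigidity mechanism --- for the propagator $e^{-itH}$.
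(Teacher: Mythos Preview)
Your proposal is correct and takes essentially the same approach as the paper: the paper's entire proof is a one-line citation of \cite{bourgain_nls_radial_gwp, ckstt, ryckman-visan, visan_nls_general_d} for the defocusing case and \cite{kenig-merle_focusing_nls, kv_focusing_nls} for the focusing case, and you do the same while additionally sketching the concentration-compactness architecture. One small inaccuracy in your sketch: the symmetry group for the $\dot H^1$-critical profile decomposition consists of scaling and space-time translations, not Galilean boosts (those enter the mass-critical theory), so you should drop that parenthetical.
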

\begin{proof}
  See \cite{bourgain_nls_radial_gwp, ckstt, ryckman-visan,
    visan_nls_general_d} for the defocusing case and
  \cite{kenig-merle_focusing_nls, kv_focusing_nls} for the focusing
  case.
\end{proof}

One can formulate a similar conjecture for
\eqref{eqn:nls_with_quadratic_potential}; however, as the linear
propagator is periodic in time, one only expects uniform
local-in-time spacetime bounds.
\begin{conjecture}
\label{conjecture:harmonic_oscillator}
When $\mu = 1$, equation \eqref{eqn:nls_with_quadratic_potential} is
globally wellposed. That is, for each $u_0 \in \Sigma$ there is a
unique global solution $u: \mf{R} \times \mf{R}^d \to \mf{C}$ with
$u(0) = u_0$. This solution obeys the spacetime bound
\begin{equation}
\label{eqn:main_theorem_spacetime_bound}
S_I(u) := \int_I \int_{\mf{R}^d} |u(t,x)|^{\fr{2(d+2)}{d-2}} \, dx \,
dt \le C(|I|, \|u_0\|_{\Sigma} )
\end{equation}
for any compact interval $I \subset \mf{R}$.

If $\mu = -1$, then the same is true provided also that 
\[E(u_0) <
E_{\Delta}(W) \quad \text{and} \quad \| \nabla u_0 \|_{L^2} \le \| \nabla W \|_{L^2}.
\]

\end{conjecture}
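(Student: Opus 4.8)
The plan is to run the concentration--compactness / induction-on-energy machinery of Kenig--Merle, adapted to the confined setting as by Killip--Visan--Zhang, but carried out in $\Sigma$ rather than $\dot H^1$. One first needs a robust local theory for \eqref{eqn:nls_with_quadratic_potential} in $\Sigma$: Strichartz estimates for $e^{-itH}$, local wellposedness, small-data global existence, persistence of the $\Sigma$-regularity, and a long-time perturbation (stability) lemma. Since Mehler's formula gives $e^{-itH}$ the same short-time dispersive bound as $e^{it\Delta/2}$, and since the \emph{lens transform}
\[
(\mathcal{L}v)(t,x) = (\cos t)^{-d/2}\, e^{-\frac{i|x|^2\tan t}{2}}\, v\!\left(\tan t,\tfrac{x}{\cos t}\right),\qquad t\in\left(-\tfrac{\pi}{2},\tfrac{\pi}{2}\right),
\]
intertwines solutions of the potential-free equation \eqref{eqn:ckstt_eqn} with solutions of \eqref{eqn:nls_with_quadratic_potential} --- the exponent $\tfrac{4}{d-2}$ being exactly the one making the induced nonlinearity weight constant in time --- and fixes the Cauchy data at $t=0$, the free local/stability theory transplants. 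Because $e^{-itH}$ is time-periodic and because $E$ (in the focusing case, $E$ together with the variational characterization of $W$ --- the Kenig--Merle energy-trapping lemma) controls $\sup_t\|u(t)\|_\Sigma$ a priori, it then suffices to prove a uniform spacetime bound $S_{I_0}(u)\le C(\|u(0)\|_\Sigma)$ on one fixed compact interval $I_0$.

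Suppose this fails, so that there is a critical threshold $E_c\in(0,\infty)$ --- for $\mu=1$ the supremum of energies, for $\mu=-1$ its analogue constrained by the ground-state hypotheses --- below which every solution obeys a uniform bound on $S_{I_0}$, but at which the bound degenerates. Producing the critical element requires a \emph{linear profile decomposition adapted to $e^{-itH}$}: for any sequence $(\psi_n)$ bounded in $\Sigma$, along a subsequence $\psi_n = \sum_{j=1}^J\phi_n^j + r_n^J$ with $\limsup_n\|e^{-itH}r_n^J\|_{L^{2(d+2)/(d-2)}_{t,x}(I_0\times\mf{R}^d)}\to 0$ as $J\to\infty$, with the $\Sigma$-norm and the energy decoupling along the $\phi_n^j$, and with each profile of one of two types: \emph{oscillator bubbles} $\phi_n^j = e^{-it_n^j H}\phi^j$ at the fixed scale $1$ with $t_n^j$ in a fundamental domain of the period, and \emph{concentrating bubbles} $\phi_n^j(x) = (\lambda_n^j)^{-\frac{d-2}{2}}\,(e^{is_n^j\Delta/2}\phi^j)\!\left(\tfrac{x-x_n^j}{\lambda_n^j}\right)$ with $\lambda_n^j\to 0$, with $\lambda_n^j|x_n^j|$ bounded (confinement forbidding translations beyond scale $1/\lambda_n^j$), and with $s_n^j\to s^j\in[-\infty,\infty]$; distinct profiles have asymptotically orthogonal parameters. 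This is proved by iterating an inverse Strichartz inequality --- a sequence with non-vanishing Strichartz norm concentrates, along a subsequence, at some scale/position/time --- the extraction being performed by pushing $\psi_n$ through $\mathcal{L}$ into the free setting, invoking there the Keraani-type refined Sobolev inequality, and then translating the resulting free parameters back into admissible $\Sigma$-data. I expect this profile decomposition to be the main obstacle: one must handle the two heterogeneous bubble families simultaneously, respect the constrained translations and the time parameters modulo the period, and control the full $\Sigma$-structure --- not merely $\dot H^1$ --- through the lens transform.

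Granting the profile decomposition, the reduction to a minimal blowup solution is by now routine. To each linear profile one attaches a \emph{nonlinear profile}: to an oscillator bubble, the maximal-lifespan solution of \eqref{eqn:nls_with_quadratic_potential} with that data, which has finite $S_{I_0}$ by the induction hypothesis when its energy is $<E_c$; to a concentrating bubble, one rescales the equation by $\lambda_n^j$, observes that the potential degenerates like $(\lambda_n^j)^4|x|^2\to 0$, and concludes via the stability lemma that the nonlinear profile is well approximated by the rescaled transplant of the solution of the \emph{potential-free} equation \eqref{eqn:ckstt_eqn} with data $e^{is^j\Delta/2}\phi^j$ --- which by Theorem \ref{thm:ckstt} (or, in the focusing cases not covered by it, by Conjecture \ref{conjecture:ckstt}) is global with finite spacetime norm, since the decoupling places that profile below the relevant threshold. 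If two or more profiles were present, superposing their nonlinear evolutions together with $e^{-itH}r_n^J$ would, through the stability lemma, furnish a genuine solution of \eqref{eqn:nls_with_quadratic_potential} with data $\psi_n$ and finite $S_{I_0}$ --- contradicting the choice of $\psi_n$ along a blowup sequence at energy $E_c$. Hence the critical element $u_c$ is, modulo the residual symmetries, a single profile; in particular there are $N(t)\ge 1$ and $x(t)$ with $\{N(t)^{-\frac{d-2}{2}}u_c(t,x(t)+\cdot/N(t))\}$ precompact in $\Sigma$ while $S_{I_c}(u_c)=\infty$.

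It remains to preclude $u_c$. If $N(t)$ stays bounded --- equivalently, the orbit $\{u_c(t)\}$ is itself precompact in $\Sigma$, where one uses conservation of mass and the a priori control of $\|x u_c(t)\|_{L^2}$ to see that $x(t)$ stays bounded --- then the local existence time is bounded below on the compact orbit, by the small-data theory and Strichartz, and concatenating local solutions contradicts $S_{I_c}(u_c)=\infty$. Otherwise $N(t)\to\infty$ along a sequence of times approaching an endpoint of $I_c$; rescaling $u_c$ there by $1/N(t)$ and passing to the limit --- the potential once more dropping out --- yields, via a reduced-Duhamel argument, a non-zero, non-scattering almost periodic solution of the potential-free equation \eqref{eqn:ckstt_eqn}, contradicting Theorem \ref{thm:ckstt} (respectively Conjecture \ref{conjecture:ckstt}). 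Thus no critical element exists, $E_c=\infty$, and \eqref{eqn:main_theorem_spacetime_bound} holds; combined with the local theory and the a priori $\Sigma$-bound this yields global wellposedness, proving Conjecture \ref{conjecture:harmonic_oscillator}.
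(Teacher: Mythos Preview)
Your overall strategy---local theory, a profile decomposition adapted to $e^{-itH}$, embedding of the potential-free NLS for concentrating profiles, and a Palais--Smale contradiction---matches the paper's. Two points, however, diverge from the paper and one of them is an actual error.

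First, your claim about the lens transform is wrong: the exponent for which the induced nonlinearity weight is constant under $\mathcal{L}$ is the \emph{mass-critical} $p=4/d$, not the energy-critical $p=4/(d-2)$. Under $\mathcal{L}$ the equation \eqref{eqn:nls_with_quadratic_potential} corresponds to a free NLS with time-dependent coupling $(\cos t)^{4/(d-2)}$, so one cannot simply transplant Keraani's $\dot H^1$ decomposition and the free stability theory. The paper instead develops the profile decomposition \emph{directly} for $e^{-itH}$ (Section~\ref{section:lpd}): Mehler's formula is used only at the linear level, to prove convergence lemmas (Lemmas~\ref{lma:strong_convergence}--\ref{lma:weak_convergence}) showing that $(G_n)^{-1}e^{-it_nH}G_n$ tends to a modulated free propagator as the frame concentrates, and the inverse Strichartz argument is run with $H$-adapted Littlewood--Paley projections. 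The constraint $|x_n|\lesssim N_n$ (Lemma~\ref{lma:x_n_bounded_by_N_n}) and the spatial cutoffs $S_n$ needed to keep profiles in $\Sigma$ rather than merely $\dot H^1$---issues you correctly anticipate---are handled there and would not be finessed by the lens transform.

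Second, your endgame (extract an almost-periodic critical element $u_c$, then rule out bounded and unbounded $N(t)$ via separate rigidity arguments) is heavier than what the paper actually does. Because the target bounds are \emph{local in time}, if the theorem fails one can arrange solutions $u_n$ with $E(u_n)\to E_c$ and $S_{[-1/n,\,1/n]}(u_n)\to\infty$. The Palais--Smale proposition (Proposition~\ref{prop:palais-smale})---whose proof already disposes of the concentrating single-profile case via Proposition~\ref{prop:localized_initial_data} and the multi-profile case via stability---forces $u_n(0)\to\phi$ in $\Sigma$. But then stability and dominated convergence give $S_{[-1/n,\,1/n]}(u_n)\to 0$, an immediate contradiction. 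No critical element is ever constructed, and no rigidity or Morawetz argument is needed. Your route would presumably work, but the paper's shortcut is the point.
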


In \cite{kvz_quadratic_potentials}, Killip-Visan-Zhang verifed this
conjecture with $\mu = 1$ and spherically symmetric initial data. By
adapting an argument of Bourgain-Tao for the equation without potential
\eqref{eqn:ckstt_eqn}, they proved that the defocusing problem
\eqref{eqn:nls_with_quadratic_potential} is globally wellposed, and
also proved scattering for the repulsive potential. We consider only
the confining potential. In this paper, we remove the assumption of
spherical symmetry for the defocusing harmonic oscillator, and also
establish global wellposedness for the focusing problem under the
assumption that Conjecture \eqref{conjecture:ckstt} holds for all
dimensions. Specifically, we prove
\begin{thm}
\label{thm:main_theorem}
Assume that Conjecture \ref{conjecture:ckstt} holds. Then Conjecture 
\ref{conjecture:harmonic_oscillator} holds.
\end{thm}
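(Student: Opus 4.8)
The plan is to transfer the scale-invariant scattering theory (Conjecture~\ref{conjecture:ckstt}) to the harmonic oscillator via the concentration-compactness/induction-on-energy machinery. First I would set up the local theory: using Strichartz estimates for $e^{-itH}$ (which, via the Mehler kernel and the lens transform, are equivalent on short time intervals to those for the free Schr\"odinger group), establish local wellposedness in $\Sigma$ with a small-data global (in the subcritical-norm sense) result, a stability/perturbation lemma, and the fact that a solution can be continued as long as its scattering-type norm $S_I(u)$ stays finite. The key conserved/controlled quantities are the energy $E(u)$ and, crucially, control of $\|u(t)\|_\Sigma$ on bounded intervals: because $H$ does not commute with the scaling generator, one must track $\|u(t)\|_\Sigma$ by differentiating in time and using the equation; on a compact interval $I$ this yields a bound $\sup_{t\in I}\|u(t)\|_\Sigma \le C(|I|,\|u_0\|_\Sigma, S_I(u))$. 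Thus the whole problem reduces to proving the a priori spacetime bound \eqref{eqn:main_theorem_spacetime_bound} on an arbitrary fixed compact interval $I$.

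Next I would run the standard contradiction argument. If the spacetime bound fails, then there is a critical threshold (of energy, or of $\|u_0\|_\Sigma$ together with $|I|$) at which $S_I$ first becomes infinite, and one extracts a sequence of near-threshold solutions whose spacetime norms blow up. Applying the linear profile decomposition adapted to $e^{-itH}$ for bounded sequences in $\Sigma$ — announced in the abstract and which I would invoke as the main technical input — to the sequence of initial data, one decomposes $u_{0,n}$ into a sum of profiles transported by the propagator, each associated with parameters (time translations, and spatial scales/translations encoded through the relevant symmetry group). Here the dichotomy in the profile decomposition is essential: a profile whose concentration scale $\lambda_n$ stays $\gtrsim 1$ behaves like a solution of the harmonic-oscillator NLS itself, while a profile with $\lambda_n \to 0$ concentrates at a point and, after rescaling, converges to a solution of the \emph{scale-invariant} equation \eqref{eqn:ckstt_eqn} — this is exactly the limit alluded to in sections~\ref{subsection:convergence} and~\ref{section:localized_initial_data}. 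For the second type one invokes Conjecture~\ref{conjecture:ckstt} (global wellposedness and scattering with the universal bound $C(E_\Delta)$) to obtain a good approximate solution; for the first type one uses the inductive hypothesis (strictly smaller energy/data). Feeding all the nonlinear profiles, plus the small linear error, into the stability lemma for $e^{-itH}$ contradicts the assumed blowup of $S_I(u_n)$ unless there is exactly one profile carrying the full critical energy, with all parameters trivial — a minimal (almost periodic) blowup solution.

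The final step is to preclude such a minimal blowup solution. In the defocusing case one expects it to be ruled out by a virial/Morawetz-type monotonicity argument: the compactness of the critical solution (its orbit is precompact in $\Sigma$ modulo the relevant symmetries) forces it to live at a fixed spatial scale, and then the interaction Morawetz or a localized virial identity — adapted to account for the $|x|^2$ potential, which actually helps by providing a coercive term — yields a contradiction with the assumed infinite lifespan of $S_I$. In the focusing case, the hypotheses $E(u_0) < E_\Delta(W)$ and $\|\nabla u_0\|_{L^2} \le \|\nabla W\|_{L^2}$, combined with the sharp Sobolev (Gagliardo-Nirenberg) inequality, give the needed coercivity to keep the solution in the "subthreshold" regime and again close the virial argument. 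The main obstacle, I expect, is the profile decomposition itself and the attendant decoupling: the propagator $e^{-itH}$ is not scale-invariant and is time-periodic, so one must carefully identify the right group of symmetries along which profiles can escape, prove that profiles with $\lambda_n\to 0$ genuinely decouple in $\Sigma$ (not just in $\dot H^1$), and control the nonlinear interaction between the "oscillator-type" and "scale-invariant-type" profiles — in particular verifying that the rescaled oscillator evolution of a concentrating profile is well approximated by the free evolution of the corresponding $\dot H^1$ solution on the relevant (shrinking) time scales, uniformly enough to apply stability.
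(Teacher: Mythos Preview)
Your overall architecture---local theory, profile decomposition adapted to $e^{-itH}$, nonlinear profiles (using Conjecture~\ref{conjecture:ckstt} for concentrating profiles and the inductive hypothesis for the rest), and stability to recombine---matches the paper. The difference lies in your proposed final step.

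You plan to extract a minimal almost-periodic blowup solution and then preclude it by a virial/Morawetz rigidity argument. The paper does \emph{not} do this, and in fact the rigidity step is unnecessary here. The key observation (stated explicitly in the introduction and executed in Section~\ref{section:palais-smale}) is that one only seeks a \emph{local-in-time} spacetime bound, so it suffices to rule out blowup on arbitrarily short intervals. Concretely: if the theorem fails, there is a critical energy $E_c$ and a sequence $u_n$ with $E(u_n)\to E_c$ and $S_{[-1/n,\,1/n]}(u_n)\to\infty$. The Palais-Smale proposition (your profile-decomposition step) shows, after passing to a subsequence, that $u_n(0)\to\phi$ \emph{strongly in $\Sigma$}---not merely modulo symmetries, since for the single surviving profile one rules out $N_n\to\infty$ directly via Proposition~\ref{prop:localized_initial_data}. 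But then local wellposedness produces a solution $u_\infty$ near $\phi$, stability gives $S_{[-1/n,1/n]}(u_n)\to S_{\{0\}}(u_\infty)=0$, and one has an immediate contradiction. No minimal blowup solution is ever constructed, and no Morawetz identity is invoked.

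Your proposed virial route is not obviously wrong, but it is both unnecessary and not clearly viable as stated: a Morawetz/virial identity typically yields a global-in-time monotonicity, and it is not apparent how that would contradict $S_I(u_c)=\infty$ on a \emph{bounded} interval $I$. (Note also that precompactness here is in $\Sigma$ itself, not modulo a noncompact symmetry group, which already combined with the local theory forces $S_I<\infty$---so even within your framework the Morawetz step is superfluous.) The paper's shortcut of working with shrinking time windows is what makes the argument close without any rigidity analysis.
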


By Theorem \ref{thm:ckstt}, this result is conditional only in the
focusing situation for nonradial data in dimensions $3$ and
$4$. Moreover, in the focusing case we have essentially the same
blowup result as for the potential-free NLS with the same proof as in
that case; see \cite{kv_focusing_nls}. We recall the argument in
Section \ref{section:focusing_blowup}.

\begin{thm}[Blowup]
\label{thm:focusing_blowup}
Suppose $\mu = -1$ and $d \ge 3$. If $u_0 \in \Sigma$ satisfies $E(u_0) < E_{\Delta}(W)$ and 
$\| \nabla u_0\|_2 \ge \| \nabla W\|_2$, then the solution to \eqref{eqn:nls_with_quadratic_potential} 
blows up in finite time.
\end{thm}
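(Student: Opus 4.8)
The plan is to run the finite–variance virial argument of Glassey, in the form adapted to the energy-critical regime by Kenig--Merle \cite{kenig-merle_focusing_nls, kv_focusing_nls}, observing that the confining potential only helps. Because $u_0 \in \Sigma$ and the solution stays in $C^0_t(\Sigma)$ on its maximal interval of existence, the variance $\mathcal{V}(t) := \int_{\mf{R}^d} |x|^2|u(t,x)|^2\,dx = \|xu(t)\|_{L^2}^2$ is finite and continuous in $t$. The first step is to establish the virial identities
\[
\mathcal{V}'(t) = 2\,\mathrm{Im}\int_{\mf{R}^d}\bar u\, x\cdot\nabla u\, dx, \qquad \mathcal{V}''(t) = 2\|\nabla u(t)\|_{L^2}^2 - 2\|xu(t)\|_{L^2}^2 - 4\int_{\mf{R}^d}|u(t)|^{\fr{2d}{d-2}}\, dx
\]
(valid here since $\mu = -1$); relative to the potential-free equation \eqref{eqn:ckstt_eqn} the only new term is $-2\|xu\|_{L^2}^2 = -2\mathcal{V}$, which carries a favorable sign. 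At the level of regularity provided these identities are not quite formal, so I would justify them by approximating $u_0$ by Schwartz data, verifying the computation for the resulting smooth solutions, and passing to the limit using the local theory and conservation of $E$; this approximation step is the only genuinely technical point. Eliminating $\|\nabla u\|_{L^2}^2$ via energy conservation rewrites the identity as $\mathcal{V}''(t) + 4\mathcal{V}(t) = 4E(u_0) - \tfr{8}{d}\int|u(t)|^{\fr{2d}{d-2}}\,dx$, and since $\mathcal{V} \ge 0$ this yields $\mathcal{V}''(t) \le 4E(u_0) - \tfr{8}{d}\int |u(t)|^{\fr{2d}{d-2}}\,dx$ for all $t$ in the existence interval.

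The second step is to show the right-hand side is bounded above by a negative constant depending only on $E_\Delta(W) - E(u_0)$; this is where the variational characterization of $W$ enters, exactly as in the scale-invariant problem. Writing $E_\Delta(u(t)) = \tfr{1}{2}\|\nabla u(t)\|_{L^2}^2 - \tfr{d-2}{d}\int|u(t)|^{\fr{2d}{d-2}}$, so that $E(u(t)) = E_\Delta(u(t)) + \tfr{1}{2}\|xu(t)\|_{L^2}^2$, and recalling $E_\Delta(W) = \tfr{1}{d}\|\nabla W\|_{L^2}^2$ from the Pohozaev identity for $W$, one first observes that the hypothesis $\|\nabla u_0\|_{L^2} \ge \|\nabla W\|_{L^2}$ is in fact strict: equality would force $E_\Delta(u_0) \ge E_\Delta(W)$ by the sharp Sobolev inequality (for which $W$ is an extremizer), contradicting $E_\Delta(u_0) \le E(u_0) < E_\Delta(W)$. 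Since $E_\Delta(u(t)) = E(u_0) - \tfr{1}{2}\|xu(t)\|_{L^2}^2 \le E(u_0) < E_\Delta(W)$ throughout the existence interval, the same Sobolev argument forbids $\|\nabla u(t)\|_{L^2} = \|\nabla W\|_{L^2}$, so by continuity of $t \mapsto \|\nabla u(t)\|_{L^2}$ we retain $\|\nabla u(t)\|_{L^2} > \|\nabla W\|_{L^2}$. Then $\int|u(t)|^{\fr{2d}{d-2}} = \tfr{d}{d-2}\bigl(\tfr{1}{2}\|\nabla u(t)\|_{L^2}^2 - E_\Delta(u(t))\bigr) \ge \tfr{d}{d-2}\bigl(\tfr{1}{2}\|\nabla W\|_{L^2}^2 - E(u_0)\bigr)$, and substituting $E_\Delta(W) = \tfr{1}{d}\|\nabla W\|_{L^2}^2$ and simplifying collapses this to $\mathcal{V}''(t) \le -c$ with $c := \tfr{4d}{d-2}\bigl(E_\Delta(W) - E(u_0)\bigr) > 0$.

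The final step is a convexity contradiction. If the solution existed on all of $[0,\infty)$, then $\mathcal{V}'' \le -c$ there, so $\mathcal{V}(t) \le \mathcal{V}(0) + \mathcal{V}'(0)t - \tfr{c}{2}t^2 \to -\infty$ as $t\to\infty$, contradicting $\mathcal{V}(t) = \|xu(t)\|_{L^2}^2 \ge 0$. Hence the forward maximal existence time is finite, and by the time-reversal symmetry $u(t,x) \mapsto \overline{u(-t,x)}$ of \eqref{eqn:nls_with_quadratic_potential} the backward one is finite as well; that is, the solution blows up in finite time. In short, the argument is essentially the one for the potential-free focusing NLS recalled in \cite{kv_focusing_nls}, with the harmonic oscillator contributing only the helpful term $-2\mathcal{V}$ rather than any obstruction — the sole place requiring care is the rigorous passage to the virial identities at $\Sigma$-regularity.
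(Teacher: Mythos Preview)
Your proposal is correct and follows essentially the same approach as the paper: compute the second derivative of the variance $\int |x|^2|u|^2\,dx$, use the Kenig--Merle energy trapping (which you reproduce inline rather than cite as a lemma) to bound it above by a negative constant, and conclude by concavity. The paper's proof is terser---it simply invokes the virial identity from \cite{cazenave} and the trapping corollary---but the structure is identical, and your observation that the harmonic term $-2\|xu\|_{L^2}^2$ only helps is exactly the point.
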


Mathematically, the energy-critical NLS with quadratic potential has
several interesting properties. On one hand, it is a nontrivial
variant of the potential-free equation. If the quadratic potential is
replaced by a weaker potential, the proof of global wellposedness can
sometimes ride on the coat tails of Theorem \ref{thm:ckstt}.  For
example, we show in Section \ref{section:bounded_linear_potentials}
that for smooth, bounded potentials with bounded derivative, one
obtains global wellposedness by treating the potential as a
perturbation to \eqref{eqn:ckstt_eqn}.  Further, the Avron-Herbst
formula given in \cite{carles_time-dependent_potential} reduces the
problem with a linear potential $V(x) = Ex$ to
\eqref{eqn:ckstt_eqn}. On the other hand, the quantum harmonic
oscillator enjoys the remarkable property that its linear propagator
$e^{-itH}$ has an explicit integral kernel in the form of Mehler's
formula. In view of the preceding remarks, we believe that
\eqref{eqn:nls_with_quadratic_potential} is the most accessible
generalization of \eqref{eqn:ckstt_eqn} which does not come for free.

\textbf{Proof outline}. The local theory for
\eqref{eqn:nls_with_quadratic_potential} shows that global existence
is equivalent to the uniform \emph{a priori} spacetime bound
\eqref{eqn:main_theorem_spacetime_bound}.  To prove this bound for all
solutions, we apply the general strategy of induction on energy
pioneered by Bourgain \cite{bourgain_nls_radial_gwp} and refined over
the years by Colliander-Keel-Staffilani-Takaoka-Tao \cite{ckstt},
Keraani \cite{keraani_nls_blowup}, Kenig-Merle
\cite{kenig-merle_focusing_nls}, and others.  These arguments proceed
roughly as follows. One assumes for a contradiction that Theorem
\ref{thm:main_theorem} fails, and is then faced with two main tasks:
\begin{itemize}
\item [(1)] Prove the existence of a minimal counterexample (where 
``minimal" will be defined shortly). 
\item [(2)] Show that this counterexample violates properties obeyed 
by all solutions.
\end{itemize}

Let us elaborate a little on these steps. The energy $E(u)$ of a
solution (which equals the energy of the initial data) will serve as
our induction parameter.  By the local wellposedness theory (which we
review in Section \ref{section:local_theory}), uniform spacetime
bounds hold for all solutions with sufficiently small energy
$E(u)$. Assuming that Theorem~\ref{thm:main_theorem} fails, we obtain
a positive threshold $0 < E_c < \infty$ such that
\eqref{eqn:main_theorem_spacetime_bound} holds whenever $E(u) < E_c$
and fails when $E(u) > E_c$. The first step described above would have
us construct a solution $u_c : I \times \mf{R}^d \to \mf{C}$ on a
bounded interval $I$ with $S_I(u_c) = \infty$, and whose energy equals
precisely the critical threshold $E_c$.  Thus ``minimal" in our
setting refers to minimal energy.

We will carry out a variant of this strategy that is better adapted to
what we are trying to prove. Since the spacetime estimates of interest
are local-in-time, it suffices to prevent the blowup of spacetime norm
on arbitrarily small time intervals. That is, we need only show that
for each energy $E > 0$, there exists some $L = L(E) > 0$ so that
$S_I(u) \le C(E)$ whenever $E(u) \le E$ and $|I| \le L$. To prove this
statement, we work not with minimal-energy blowup solutions directly
but rather with the Palais-Smale compactness theorem (Proposition
\ref{prop:palais-smale}) that would beget such solutions.  Our
argument will ultimately reduce the question of global wellposedness
for \eqref{eqn:nls_with_quadratic_potential} to that of global
wellposedness and scattering for the potential-free equation
\eqref{eqn:ckstt_eqn}. In effect, we shall discover that the only
scenario where blowup could possibly occur is when the solution is
highly concentrated at a point and behaves like a solution
to \eqref{eqn:ckstt_eqn}.

This paradigm of recovering the potential-free NLS in certain limiting
regimes is by now well-known and has been applied to the study of
other equations.  See \cite{kksv_gkdv, 2d_klein_gordon, MR3006640,
  MR2925134, ionescu, kvz_exterior_convex_obstacle} for adaptations to
gKdV, Klein-Gordon, and NLS in various domains and manifolds. While
the particulars are unique to each case, a common key step is to prove
an appropriate compactness theorem in the style of Proposition
\ref{prop:palais-smale}. As in the previous work, our proof of that
proposition uses three main ingredients.

The first prerequisite is a local wellposedness theory that gives
local existence and uniqueness as well 
as stability of solutions with respect to perturbations of the 
initial data or the equation itself. In our case, local wellposedness 
will follow from familiar arguments employing the dispersive 
estimate satisfied 
by the linear propagator $e^{-itH}$, as well the fractional product and 
chain 
rules for the operators $H^{\gamma}, \ \gamma \ge 0$. We 
review the relevant results in Section \ref{section:local_theory}.

We also need a linear profile decomposition for 
the Strichartz inequality 
\begin{equation}
\label{eqn:intro_strichartz}
\|e^{-itH} f\|_{L^{\fr{2(d+2)}{d-2}}_{t, x}} 
\lesssim \|H^{\fr{1}{2}} f\|_{L^2_x}.
\end{equation}
Such a decomposition in the context of energy-critical Schr\"{o}dinger
equations was first proved by Keraani
\cite{keraani_compactness_defect} in the translation-invariant setting
for the free particle Hamiltonian $H = - \Delta$, and quantifies the
manner in which a sequence of functions $f_n$ with $\| H^{1/2}
f_n\|_{L^2}$ bounded may fail to produce a subsequence of $e^{-itH}
f_n$ converging in the spacetime norm. The defect of compactness
arises in Keraani's case from a noncompact group of symmetries of the
inequality \eqref{eqn:intro_strichartz}, which includes spatial
translations and scaling.  In our setting, there are no obvious
symmetries of \eqref{eqn:intro_strichartz}; nonetheless, compactness
can fail and in Section \ref{section:lpd} we formulate a profile
decomposition for \eqref{eqn:intro_strichartz} when $H$ is the
Hamiltonian of the harmonic oscillator.

The final ingredient 
is an analysis of \eqref{eqn:nls_with_quadratic_potential} when the 
initial data is highly concentrated 
in space, corresponding to a single profile in the linear profile 
decomposition just discussed. 
In Section \ref{section:localized_initial_data}, we show 
that blowup cannot occur in this regime. The basic idea is that 
while the solution 
to \eqref{eqn:nls_with_quadratic_potential} remains highly localized 
in space, it can be well-approximated  
up to a phase factor by the corresponding 
solution to the scale-invariant energy-critical 
NLS
\begin{equation}
\label{eqn:intro_ckstt}
(i\partial_t + \tfr{1}{2}\Delta)  u = \pm |u|^{\fr{4}{d-2}} u.
\end{equation}
By the time this approximation breaks down, the solution to the 
original equation will have dispersed and can instead be approximated by
 a solution 
to the linear equation $(i\partial_t - H)u = 0$.  
We use as a black box the nontrivial fact (which is still a conjecture
in several cases) that solutions to \eqref{eqn:ckstt_eqn}
obey global spacetime bounds. By stability theory, the spacetime bounds 
for the approximations will be transferred to the solution for 
the original equation and will therefore preclude blowup.

We have chosen to focus on the concrete potential $V(x) =
\tfr{1}{2}|x|^2$ mainly for concreteness. In a forthcoming paper we will
indicate how to extend the main result to a more general class of
subquadratic potentials.

\subsection*{Acknowledgements}
The author is indebted to his advisors Rowan Killip and Monica Visan
for their helpful discussions as well as their feedback on the
paper. This work was supported in part by NSF grants DMS-0838680
(RTG), DMS-1265868 (PI R.~Killip), DMS-0901166, and
DMS-1161396 (both PI M.~Visan).



\section{Preliminaries}

\subsection{Notation and basic estimates} 
\label{subsection:notation}
We write $X \lesssim Y$ to mean $X \le C Y$ for some constant
$C$. Similarly $X \sim Y$ means $X \lesssim Y$ and $Y \lesssim
X$. Denote by $L^p(\mf{R}^d)$ the Banach space of functions $f: \mf{R}^d \to \mf{C}$ with finite norm
\[
\|f\|_{L^p(\mf{R}^d)} = \left( \int_{\mf{R}^d}  |f|^p \, dx \right)^{\fr{1}{p}}.
\]
We will sometimes use the more compact notation
$\|f\|_p$. If $I \subset \mf{R}^d$ is an interval, the mixed Lebesgue
norms on $I \times \mf{R}^d$ are defined by
\[
\| f\|_{L^q_t L^r_x (I \times \mf{R}^d)} = \left( \int_I \left( \int_{\mf{R}^d} 
|f(t, x)|^r \, dx \right)^{\fr{q}{r}} dt 
\right)^{\fr{1}{q}} = \| f(t) \|_{L^q_t(I ; L^r_x(\mf{R}^d))},
\]
where one regards $f(t) = f(t, \cdot)$ as a function from $I$ to 
$L^r(\mf{R}^d)$.

The operator $H = -\tfr{1}{2}\Delta + \tfr{1}{2}|x|^2$ is positive on
$L^2(\mf{R}^d)$. Its associated heat kernel is given by Mehler's
formula \cite{folland}:
\begin{equation}
e^{-tH} (x, y) = e^{\tilde{\gamma}(t) (x^2 + y^2)} e^{\fr{\sinh (t) \Delta}{2}} 
(x, y) \label{eqn:mehler_heat_kernel}
\end{equation}
where 
\[
\tilde{\gamma}(t) = \fr{1 - \cosh t}{2\sinh t} = -\fr{t}{4} + O(t^3)
\quad \text{as} \quad t \to 0.
\]
By analytic continuation, the associated one-parameter unitary group
has the integral kernel
\begin{equation}
e^{-itH} f (x) = \fr{1}{ (2\pi i \sin t )^{\fr{d}{2}} } \int e^{\fr{i}{\sin t} 
\left(\fr{x^2+y^2}{2} \cos t - xy 
\right) } 
f(y) \, dy. \label{eqn:mehler}
\end{equation}
Comparing this to the well-known free propagator
\begin{equation}
e^{\fr{it\Delta}{2}} f (x) = \tfr{1}{ (2\pi it )^{\fr{d}{2}}} \int e^{ \fr{ i 
|x-y|^2 }{ 2t } } f(y) \, dy,
\end{equation}
we obtain the relation
\begin{equation}
e^{-itH}f = e^{i\gamma(t)|x|^2} e^{\fr{i \sin(t) \Delta}{2}}( e^{i\gamma(t)|x|^2} f) \label{eqn:lens_transformation}
\end{equation}
where 
\[
\gamma(t) = \fr{\cos t - 1}{2\sin t} = - \fr{t}{4} + O(t^3) \quad
\text{as} \quad t \to 0.
\]
Mehler's formula immediately implies the local-in-time dispersive estimate
\begin{equation}
\| e^{-itH} f\|_{L^\infty_x} \lesssim |\sin t|^{-\fr{d}{2}}
\|f\|_{L^1}. 
\label{eqn:dispersive_estimate}
\end{equation}
For $d \ge 3$, call a pair of exponents $(q, r)$ \emph{admissible} if $q \ge 2$ and $\fr{2}{q} 
+ \fr{d}{r} = \fr{d}
{2}$. Write
\[
\|f\|_{S(I)} = \| f\|_{L^\infty_t L^2_x } + \|
f\|_{L^2_t L^{\fr{2d}{d-2}}_x}
\]
with all norms taken over the spacetime slab $I \times \mf{R}^d$. 
By interpolation, we see that this norm controls the $L^q_t L^r_x$
norm for all other  admissible pairs. We let 
\[
\|F\|_{N(I)}  = \inf\{ \|F_1\|_{L^{q_1'}_t L^{r_1'}_x} +
\|F_2\|_{L^{q_2'}_t L^{r_2'}_x} : (q_k, r_k) \ \text{admissible}, \ F
= F_1 + F_2\},
\] 
where $(q_k', r_k')$ is the H\"{o}lder dual to $(q_k, r_k)$.

\begin{lma}[Strichartz estimates]
\label{lma:strichartz}
Let $I$ be a compact time interval containing $t_0$, and let $u: I \times 
\mf{R}^d \to \mf{C}$ be a 
solution to the inhomogeneous Schr\"{o}dinger equation
\begin{equation}
(i\partial_t - H)u = F. \nonumber
\end{equation}
Then there is a constant $C = C(|I|)$, depending only on the length of the 
interval, such that
\[
\| u\|_{S(I)} \le C ( \| u(t_0) \|_{L^2} + \| F\|_{N(I)}).
\]
\end{lma}
\begin{proof}
  This follows from the dispersive estimate \eqref{eqn:dispersive_estimate}, the unitarity of $e^{-itH}$ on $L^2$, and general
  considerations; see \cite{keel-tao}. By partitioning time
  into unit intervals, we see that the constant $C$ grows 
  at worst like $|I|^{\fr{1}{2}}$ (which corresponds to the 
  time exponent $q = 2$). 
\end{proof}

 It will be convenient to introduce the 
 operators which represent the time  
 evolution of the momentum and position operators under the linear
 propagator. These are well-known in the literature and were used in 
 \cite{carles_confining_oscillator} or
 \cite{kvz_quadratic_potentials}, for example. We define
 \begin{equation}
 \label{eqn:time_evolution_of_position_and_momentum}
 \begin{split}
 P(t) &= e^{itH} i\nabla e^{-itH} = i\nabla \cos t - x \sin t \\
 X(t) &= e^{itH} x e^{-itH} = i\nabla \sin t + x \cos t.
 \end{split}
 \end{equation}
One easily verifies the identity
\[
\| P(t) f\|_{L^2}^2 + \| X(t)f \|_{L^2}^2 = \| P(t) f\|_{L^2}^2 + \|P(t + \tfr{\pi}{2}) f\|_{L^2}^2
=\| f\|_{\Sigma}^2.
\]

We use the fractional powers $H^{\gamma}$ of the operator $H$,
defined via the Borel functional calculus, as a substitute for the usual derivative 
$(-\Delta)^{\gamma}$, which 
does not 
commute with the linear propagator $e^{-itH}$. We have trivially that
\begin{align*}
\| H^{\fr{1}{2}} f\|_{L^2} \sim  \| (-\Delta)^{\fr{1}{2}} f\|_{L^2} + \| |x| 
f\|_{L^2} \sim \| f\|_{\Sigma}.
\end{align*}
Perhaps less obvious is the fact that this equivalence generalizes to other
$L^p$ norms and other powers of $H$. Using complex interpolation,
Killip, Visan, and Zhang showed that this is the case:
\begin{lma}[{\cite[Lemma 2.7]{kvz_quadratic_potentials}}]
\label{lma:equivalence_of_norms}
For $0 \le \gamma \le 1$ and $1 < p < \infty$, one has 
\[
\| H^{\gamma} f\|_{L^p(\mf{R}^d)} \sim \| (-\Delta)^{\gamma} f\|_{L^p 
(\mf{R}^d) } + \| |x|
^{2\gamma} f\|_{L^p(\mf{R}^d)}.
\]
\end{lma}
As a consequence, $H^\gamma$ inherits many properties of
$(-\Delta)^{\gamma}$, including Sobolev embedding:
\begin{lma}[{\cite[Lemma 2.8]{kvz_quadratic_potentials}}]
\label{lma:sobolev_embedding}
Suppose $\gamma \in [0, 1]$ and $1 < p < \fr{d}{2\gamma}$, and define $p^*$
by $\fr{1}{p^*} = \fr{1}{p} - \fr{2\gamma}{d}$. Then
\[
\| f\|_{L^{p^*}(\mf{R}^d)} \lesssim \| H^{\gamma} f\|_{L^p(\mf{R}^d)}.
\]
\end{lma}

Similarly, the fractional chain and product rules carry over to the 
current setting: 

\begin{cor}[{\cite[Proposition~2.10]{kvz_quadratic_potentials}}]
\label{cor:fractional_chain_rule}
Let $F(z) = |z|^{\fr{4}{d-2}} z$. For any $0 \le \gamma \le \fr{1}{2}$ and 
$1 < p < \infty$,
\[
\| H^{\gamma} F(u)\|_{L^p(\mf{R}^d)} \lesssim \| F'(u)\|_{L^{p_0}(\mf{R}^d)} \| 
H^{\gamma}f \|
_{L^{p_1}(\mf{R}^d)}
\]
for all $p_0, p_1 \in (1, \infty)$ with $p^{-1} = p_0^{-1} +
p_1^{-1}$. 
\end{cor}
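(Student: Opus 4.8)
The plan is to split the weighted operator $H^\gamma$ into its two constituent pieces via Lemma~\ref{lma:equivalence_of_norms} and estimate each separately. Applying that lemma with the exponent $\gamma\in[0,\tfr{1}{2}]\subset[0,1]$ gives
\[
\| H^\gamma F(u)\|_{L^p(\mf{R}^d)} \sim \| (-\Delta)^\gamma F(u)\|_{L^p(\mf{R}^d)} + \| \, |x|^{2\gamma} F(u)\, \|_{L^p(\mf{R}^d)},
\]
so it suffices to bound each term on the right by $\| F'(u)\|_{L^{p_0}}\| H^\gamma u\|_{L^{p_1}}$. The hypothesis $p^{-1}=p_0^{-1}+p_1^{-1}$ with $p_0,p_1\in(1,\infty)$ keeps every exponent in the range where both Lemma~\ref{lma:equivalence_of_norms} and the classical estimates below apply; the endpoint $\gamma=0$ is immediate from the pointwise bound $|F(u)|\sim|F'(u)|\,|u|$ and H\"{o}lder, so we may assume $\gamma>0$.

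For the unweighted term I would invoke the usual (potential-free) fractional chain rule: since $F(z)=|z|^{\fr{4}{d-2}}z$ is of class $C^1$ on $\mf{C}$ and the order of differentiation satisfies $0<\gamma<1$, one has
\[
\| (-\Delta)^\gamma F(u)\|_{L^p} \lesssim \| F'(u)\|_{L^{p_0}}\, \| (-\Delta)^\gamma u\|_{L^{p_1}},
\]
after which $\|(-\Delta)^\gamma u\|_{L^{p_1}}\le\|H^\gamma u\|_{L^{p_1}}$ by another appeal to Lemma~\ref{lma:equivalence_of_norms}. It is worth emphasizing that the chain rule in the form carrying $\|F'(u)\|$ needs only $F\in C^1$, with no Lipschitz or H\"{o}lder hypothesis on $F'$; this is what makes the argument uniform in $d\ge 3$, including the high dimensions where $\fr{4}{d-2}$ is small and $F'$ is only H\"{o}lder continuous of low order.

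For the weighted term the estimate is purely algebraic: since $|F(z)|=|z|^{\fr{4}{d-2}}|z|\sim|F'(z)|\,|z|$ pointwise, we have $|x|^{2\gamma}|F(u)|\lesssim|F'(u)|\cdot|x|^{2\gamma}|u|$, so H\"{o}lder's inequality gives
\[
\| \, |x|^{2\gamma} F(u)\, \|_{L^p} \lesssim \| F'(u)\|_{L^{p_0}}\, \| \, |x|^{2\gamma} u\, \|_{L^{p_1}},
\]
and one more application of Lemma~\ref{lma:equivalence_of_norms} bounds $\||x|^{2\gamma}u\|_{L^{p_1}}$ by $\|H^\gamma u\|_{L^{p_1}}$. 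Summing the two contributions and absorbing constants completes the proof.

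The only genuinely nontrivial ingredient is the flat fractional chain rule used in the second step; everything else is H\"{o}lder's inequality together with the norm equivalence of Lemma~\ref{lma:equivalence_of_norms}. Accordingly, the main obstacle is really just careful bookkeeping of exponents: one must check that the intermediate $L^{p_0}$ and $L^{p_1}$ norms stay in the admissible range $(1,\infty)$ (guaranteed by the hypotheses) and that Lemma~\ref{lma:equivalence_of_norms} is invoked only with a power $\le 1$, which the restriction $\gamma\le\tfr{1}{2}$ makes automatic. No harmonic analysis specific to the harmonic oscillator enters beyond what that lemma already packages.
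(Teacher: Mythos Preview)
Your proof is correct, and it follows exactly the route one would expect: split via Lemma~\ref{lma:equivalence_of_norms}, invoke the classical fractional chain rule for $(-\Delta)^\gamma$ on the derivative piece, and handle the weight $|x|^{2\gamma}$ by the pointwise identity $|F(u)|\sim |F'(u)|\,|u|$ together with H\"{o}lder. The paper does not write out a proof of this corollary at all---it simply cites \cite[Proposition~2.10]{kvz_quadratic_potentials}---but the surrounding discussion (and the proof sketch given for the companion product-rule Corollary~\ref{cor:fractional_product_rule}) makes clear that this is precisely the intended mechanism, so your argument is the expected one.
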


Using Lemma~\ref{lma:equivalence_of_norms} and the Christ-Weinstein
fractional product rule for $(-\Delta)^\gamma$
(e.g. \cite{taylor_tools}), we obtain
\begin{cor}
\label{cor:fractional_product_rule}
For $\gamma \in (0, 1], \  r, p_i, q_i \in (1, \infty)$ with $r^{-1} = p_i^{-1} +
q_{i}^{-1}, \ i = 1, 2$, we have 
\[
\| H^{\gamma} (fg)\|_r \lesssim \| H^{\gamma} f\|_{p_1} \|g\|_{q_1} +
\|f\|_{p_2} \|H^{\gamma} g\|_{q_2}.
\]

\end{cor}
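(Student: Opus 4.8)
The plan is to reduce this to the corresponding Leibniz rule for $(-\Delta)^\gamma$ via the norm equivalence of Lemma~\ref{lma:equivalence_of_norms}, treating the extra weighted piece by hand. First I would apply Lemma~\ref{lma:equivalence_of_norms} with exponent $r$ to write
\[
\| H^{\gamma}(fg)\|_r \sim \| (-\Delta)^{\gamma}(fg)\|_r + \| |x|^{2\gamma} fg\|_r,
\]
so it suffices to dominate each term on the right by the claimed quantity.

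For the first term I would invoke the Christ--Weinstein fractional Leibniz rule for $(-\Delta)^\gamma$ (valid for every positive order and for exponents lying strictly in $(1,\infty)$, which is exactly the present hypothesis), obtaining
\[
\| (-\Delta)^{\gamma}(fg)\|_r \lesssim \| (-\Delta)^{\gamma} f\|_{p_1}\|g\|_{q_1} + \|f\|_{p_2}\|(-\Delta)^{\gamma} g\|_{q_2}.
\]
For the second term, since $r^{-1} = p_1^{-1} + q_1^{-1}$, H\"older's inequality gives
\[
\| |x|^{2\gamma} fg\|_r \le \| |x|^{2\gamma} f\|_{p_1}\,\|g\|_{q_1},
\]
where I deliberately use the $i=1$ splitting so that the resulting factors match those already present in the first term. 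Applying Lemma~\ref{lma:equivalence_of_norms} in the direction $\| (-\Delta)^{\gamma} f\|_{p_1} + \| |x|^{2\gamma} f\|_{p_1} \lesssim \| H^{\gamma} f\|_{p_1}$ and, similarly, $\| (-\Delta)^{\gamma} g\|_{q_2} \lesssim \| H^{\gamma} g\|_{q_2}$, one combines the three contributions into $\| H^{\gamma} f\|_{p_1}\|g\|_{q_1} + \|f\|_{p_2}\|H^{\gamma} g\|_{q_2}$, which is the asserted bound.

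There is no real obstacle here: the argument is purely bookkeeping once the product rule for $(-\Delta)^\gamma$ and Lemma~\ref{lma:equivalence_of_norms} are in hand. The only points that deserve a moment's attention are verifying that the exponent hypotheses of the Christ--Weinstein inequality are satisfied --- all of $r, p_i, q_i$ are strictly between $1$ and $\infty$ and $\gamma\in(0,1]$ corresponds to a fractional derivative of positive order --- and handling the weighted term with a single H\"older splitting that reappears in the final estimate, rather than using both splittings at once. No cancellation or additional harmonic-analytic input beyond the cited results is needed.
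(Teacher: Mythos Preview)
Your proposal is correct and is exactly the argument the paper has in mind: the paper states the corollary as an immediate consequence of Lemma~\ref{lma:equivalence_of_norms} together with the Christ--Weinstein fractional product rule for $(-\Delta)^\gamma$, and your write-up simply spells out those two steps (plus the H\"older estimate for the weighted piece).
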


The exponent $\gamma = \fr{1}{2}$ is particularly relevant to us, and it 
will be convenient to use the notation $L^{q}_t \Sigma^r_x( I \times 
\mf{R}^d)$ for the space 
of functions $f$ with norm
\[
\| f\|_{L^q_t \Sigma^r_x} = \| H^{\fr{1}{2}} f\|_{L^q_t L^r_x}.
\]
The superscript on $\Sigma$ is assumed to be $2$ if omitted. We shall need 
the following refinement of Fatou's Lemma due to Br\'{e}zis and 
Lieb:
\begin{lma}[Refined Fatou \cite{brezis_lieb}]
\label{lma:refined_fatou}
Fix $1 \le p < \infty$, and suppose $f_n$ is a sequence of functions in 
$L^p(\mf{R}^d)$ such that $\sup_n \|f_n\|_{p} < \infty$ and $f_n \to f$ 
pointwise. Then
\[
\lim_{n \to \infty} \int_{\mf{R}^d} \left| |f_n|^p - |f_n - f|^p - |f|^p 
\right| \, dx = 0.
\]

\end{lma}

Finally, we record an analogue of the H\"{o}rmander-Mikhlin Fourier
multiplier theorem proved by Hebisch \cite{hebisch}. It enables a
Littlewood-Paley theory adapted to $H$, as discussed in the next
section.
\begin{thm}
\label{thm:hebisch}
If $F: \mf{R} \to \mf{C}$ is a bounded function which obeys the derivative 
estimates
\[
| \partial^k F(\lambda)| \lesssim_k |\lambda|^{-k} \quad \text{for all} \quad 0
\le k \le \tfr{d}{2}  + 1, 
\]
then the operator $F(H)$, defined initially on $L^2$ via the Borel
functional calculus, is bounded from $L^p$ to 
$L^p$ for all $1 < p 
< \infty$.
\end{thm}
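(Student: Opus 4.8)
The plan is to realize $F(H)$ as a Calder\'on--Zygmund operator and run the standard singular-integral argument, with Gaussian bounds for the heat semigroup $e^{-tH}$ as the main input. Those bounds are immediate from Mehler's formula \eqref{eqn:mehler_heat_kernel}: for $0 < t \le 1$ the factor $\tilde\gamma(t) = -t/4 + O(t^3)$ is negative and $\sinh t \sim t$, whence
\[
|e^{-tH}(x,y)| \lesssim t^{-d/2}\, e^{-c|x-y|^2/t}\, e^{-ct(|x|^2+|y|^2)},
\]
while for $t \ge 1$ one picks up a further exponential factor $e^{-ct}$ reflecting that $\inf\sigma(H) = d/2 > 0$. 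In particular $e^{-tH}$ satisfies the Gaussian upper bound $|e^{-tH}(x,y)| \lesssim t^{-d/2} e^{-c|x-y|^2/t}$ uniformly in $t > 0$ on $\mf{R}^d$ equipped with the Euclidean metric and Lebesgue measure, a doubling metric measure space; and $H$, being self-adjoint and nonnegative, has a bounded Borel functional calculus on $L^2$. This places $H$ in the classical framework for spectral multiplier theorems.

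From here I would dyadically decompose the multiplier. Fix a Littlewood--Paley partition $1 = \sum_{j \ge 0}\psi_j(\lambda)$ with $\psi_j$ supported in $\{\lambda \sim 2^j\}$ for $j \ge 1$, and write $F(H) = \sum_j m_j(H)$ with $m_j = F\psi_j$; the hypothesis yields $\|\partial^k m_j\|_{\infty} \lesssim 2^{-jk}$ for $0 \le k \le \lfloor d/2\rfloor + 1$, uniformly in $j$. At frequency $\lambda \sim 2^j$ the natural length scale is $\rho_j = 2^{-j/2}$, since $H$ behaves like $-\Delta$ on the high-frequency part of its spectrum, and the crux is the H\"ormander regularity estimate
\[
\sup_{|y-y'| \le \rho_j}\ \int_{|x-y| > 2\rho_j} |K_j(x,y) - K_j(x,y')|\,dx \lesssim 2^{-j\epsilon}
\]
for the kernel $K_j$ of $m_j(H)$ and some $\epsilon > 0$. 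Together with the trivial bound $\|m_j(H)\|_{L^2 \to L^2} \le \|m_j\|_{\infty} \lesssim 1$, this gives weak-type $(1,1)$, and then boundedness on $L^p$ for all $1 < p < \infty$ by interpolation with $L^2$ and self-adjoint duality. To obtain the kernel estimate one inverts the Fourier transform in the variable dual to $\sqrt{2\lambda}$ and invokes finite speed of propagation for the wave group $\cos(t\sqrt{2H})$ --- which propagates at speed one just as $\sqrt{-\Delta}$ does, $H$ being a Schr\"odinger operator --- to localize $m_j(H)$ to spatial scale $\rho_j$; the off-diagonal tails and the $y$-regularity then come from the Gaussian heat bounds via subordination.

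The main obstacle, and what pins down the sharp number of derivatives $\lfloor d/2\rfloor + 1$ (rather than $d/2 + 1 + \eta$ with a loss), is matching the finite-propagation localization to a Plancherel-type estimate $\|\chi_{[0,R]}(H)\|_{L^1 \to L^2} \lesssim R^{d/4}$ at the correct scaling. For the harmonic oscillator this reduces to the pointwise spectral-function bound $\sum_{\lambda_n \le R}|h_n(x)|^2 \lesssim R^{d/2}$ on the diagonal, $h_n$ denoting the Hermite eigenfunctions of $H$; the bound is uniform in $x$ because the spectral function is comparable to $R^{d/2}$ on the classically allowed region $|x| \lesssim \sqrt R$ and decays rapidly outside it. Inserting this into a Cotlar--Stein almost-orthogonality argument for the pieces obtained by dyadically splitting $\widehat{m_j}$ in the propagation variable closes the gap. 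Alternatively --- and this is essentially the route of the cited reference --- one can avoid the explicit wave analysis by appealing to a general multiplier theorem for nonnegative self-adjoint operators with Gaussian semigroup bounds on a doubling space (say in the Duong--Ouhabaz--Sikora form) and merely verifying that $|\partial^k F(\lambda)| \lesssim |\lambda|^{-k}$ for $k \le \lfloor d/2\rfloor + 1$ implies the localized Sobolev condition $\sup_{t>0}\|\eta\,F(t\,\cdot)\|_{W^{s,2}} < \infty$ for every $s < \lfloor d/2\rfloor + 1$, which is more than that theorem requires.
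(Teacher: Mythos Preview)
The paper does not supply its own proof of this theorem; it is recorded as a result of Hebisch and simply cited. Your sketch is a correct outline of the approach underlying Hebisch's theorem and its later refinements (Duong--Ouhabaz--Sikora and successors): Gaussian heat-kernel bounds place $H$ in the setting of nonnegative self-adjoint operators on a doubling space, and the Mikhlin-type derivative hypothesis then feeds into the general spectral-multiplier machinery via either finite propagation speed for $\cos(t\sqrt{2H})$ or the Davies--Gaffney/subordination route.

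One small simplification: you derive the Gaussian upper bound from Mehler's formula, but as the paper notes in \eqref{eqn:heat_kernel_comparison} it follows in one line from the parabolic maximum principle, since $V = \tfrac{1}{2}|x|^2 \ge 0$ forces $0 \le e^{-tH}(x,y) \le e^{t\Delta/2}(x,y)$ for all $t > 0$. This already gives the uniform-in-$t$ Gaussian bound with scale $t$, sidestepping the large-$t$ analysis of $\sinh t$ versus $t$ that your direct computation from Mehler would otherwise require.
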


\subsection{Littlewood-Paley theory}
\label{subsection:LP_theory}
Owing largely to Theorem~\ref{thm:hebisch}, we can import the basic
results of Littlewood-Paley theory with little effort, the only
change being that one replaces Fourier multipliers with spectral
multipliers. We fashion two kinds of Littlewood-Paley projections, one
using compactly supported bump functions, and the other based on
the heat kernel of $H$. The parabolic 
maximum principle implies that
\begin{equation}
0 \le e^{-tH}(x, y) \le e^{\fr{t\Delta}{2}}(x, y) = \tfr{1}{(2\pi
  t)^{d/2}} e^{-\fr{|x-y|^2}{2t}}. \label{eqn:heat_kernel_comparison}
\end{equation}

Fix a smooth function $\varphi$ supported in $|\lambda| \le 2$ with
$\varphi(\lambda) = 1$ for $|\lambda| \le 1$, and let $\psi(\lambda) = \varphi(\lambda) -
\varphi(2\lambda)$. For each dyadic number $N \in 2^{\mf{Z}}$, which we
will often refer to as ``frequency,'' define
\begin{alignat*}{2}
P^H_{\le N} &= \varphi( \sqrt{ H / N^2}), &\quad P^H_N &= \psi(\sqrt{H/N^2}),\\
\tilde{P}^H_{\le N} &= e^{-H/N^2}, &  \tilde{P}^H_{N} &= e^{-H/N^2} -
e^{-4H / N^2}.
\end{alignat*}
The associated operators $P^H_{<N}, P^H_{>N}$, etc. are defined in the usual
manner. 
\begin{rmk}
As the spectrum of $H$ is bounded away from $0$, by choosing
$\varphi$ appropriately we can arrange for $P_{< 1} = 0$; thus we will
only consider frequencies $N \ge 1$.
\end{rmk}
 Similarly, let 
\begin{alignat}{2}
P^\Delta_{\le N} &= \varphi( \sqrt{ -\Delta / N^2}) &\quad P^\Delta_N &= \psi(\sqrt{-\Delta/N^2}),\\
\tilde{P}^\Delta_{\le N} &= e^{\Delta/2N^2} & \tilde{P}^\Delta_{N} &= e^{\Delta/2N^2} -
e^{2\Delta / N^2}.
\end{alignat}
denote the classical Littlewood-Paley projections. From the maximum 
principle we obtain the pointwise bound
\begin{equation}
\label{eqn:Littlewood-Paley_pointwise_comparison}
|\tilde{P}^H_N f (x)| + |\tilde{P}^H_{\le N} f(x) | \lesssim
\tilde{P}_{\le N}^{\Delta} |f| (x) + \tilde{P}_{\le N/2}^{\Delta} 
|f|(x).
\end{equation}

To reduce clutter we usually suppress the
superscripts $H$ and $\Delta$ when it is clear from the context which
type of projection we are using. For the rest of this section,
$P_{\le N}$ and $P_N$ denote $P^H_{\le N}$ and $P^H_{N}$, respectively. 
\begin{lma}[Bernstein estimates]
\label{lma:Littlewood-Paley_facts}
For $f \in C^\infty_c(\mf{R}^d), \ 1 < p \le q < \infty, \ s \ge
0$, one has the Bernstein inequalities
\begin{gather}
\| P_{\le N} f\|_p \lesssim \| \tilde{P}_{\le N} f\|_p, \quad \| P_N
f\|_p \lesssim \| \tilde{P}_N f\|_p \label{eqn:Littlewood-Paley_projector_comparison}\\
\| P_{\le N} f \|_p + \| P_N f \|_p + \| \tilde{P}_{\le N} f \|_p + \|
\tilde{P}_N f \|_p \lesssim \|f\|_p \label{eqn:Littlewood-Paley_Lp_boundedness}\\
\| P_{\le N} f \|_q + \| P_N f \|_q + \| \tilde{P}_{\le N} f\|_q + \|
\tilde{P}_N f \|_q \lesssim N^{\fr{d}{p} - \fr{d}{q}}
\|f\|_p \label{eqn:Littlewood-Paley_Bernstein}\\
N^{2s} \| P_N f\|_p \sim \| H^s P_N f \|_p \label{eqn:Littlewood-Paley_derivative_estimates}\\
  \|P_{>N} f\|_p \lesssim N^{-2s} \| H^{s} P_{>N} f\|_p. \label{eqn:Littlewood-Paley_Bernstein_2}
\end{gather}
In \eqref{eqn:Littlewood-Paley_Bernstein}, the estimates for
$\tilde{P}_{\le N}f$ and $\tilde{P}_{N}f$ also hold when $p = 1, \ q =
\infty$. Further,
\begin{align}
f = \sum_N P_N f = \sum_N \tilde{P}_N f \label{eqn:Littlewood-Paley_expansion_of_the_identity}
\end{align}
where the series converge in $L^p, \ 1 < p < \infty$. Finally, we have the 
square function estimate
\begin{align}
\| f \|_p \sim \| ( \sum_N |P_N f|^2 )^{1/2} \|_p. \label{eqn:Littlewood-Paley_square_function_estimate}
\end{align}
\end{lma}

\begin{proof}
The estimates \eqref{eqn:Littlewood-Paley_projector_comparison} follow
immediately from Theorem~\ref{thm:hebisch}. 
To see \eqref{eqn:Littlewood-Paley_Lp_boundedness}, observe
that the functions $\varphi(\sqrt{ \cdot / N^2}), \ e^{-\cdot / N^2}$
satisfy the hypotheses of Theorem~\ref{thm:hebisch} uniformly in
$N$. Next use \eqref{eqn:heat_kernel_comparison} together with Young's convolution inequality to
get
\begin{equation}
\| \tilde{P}_{\le N} f \|_q + \| \tilde{P}_N f\|_q \lesssim
N^{\fr{d}{q} - \fr{d}{p}} \|f\|_p \quad \text{for } 1 \le p \le q \le \infty.
\end{equation}
From \eqref{eqn:Littlewood-Paley_projector_comparison} we obtain the
rest of \eqref{eqn:Littlewood-Paley_Bernstein}. Now consider \eqref{eqn:Littlewood-Paley_derivative_estimates}. Let
$\tilde{\psi}$ be a fattened version of $\psi$ so that $\tilde{\psi} =
1$ on the support of $\psi$. Put $F(\lambda) = \lambda^{s} \tilde{\psi}(
\sqrt{\lambda})$. By
 Theorem~\ref{thm:hebisch}, the relation $\psi =
\tilde{\psi} \psi$, and the functional calculus,
\[
\| N^{-2s} H^s P_Nf\|_p = \| F(H/N^2)P_N f\|_p \lesssim \| P_N f\|_p.
\]
The reverse inequality follows by considering $F(x) = \lambda^{-s} \tilde{\psi}(\lambda)$.

We turn to \eqref{eqn:Littlewood-Paley_expansion_of_the_identity}. The
equality holds in $L^2$ by
the functional calculus and the fact that the spectrum of $H$ is
bounded away from $0$. For $p \ne 2$,
choose $q$ and $0 < \theta < 1$ so that $p^{-1} = 2^{-1} (1-\theta)
+ q^{-1}\theta $. By \eqref{eqn:Littlewood-Paley_Lp_boundedness}, the partial sum operators 
\[
S_{N_0, N_1} = \sum_{N_0 < N \le N_1} P_N, \qquad \tilde{S}_{N_0, N_1} =
\sum_{N_0 < N \le N_1} \tilde{P}_N
\]
are bounded on every $L^p, \ 1 < p < \infty$, uniformly in $N_0, N_1$.
Thus by H\"{o}lder's inequality,
\[
\|f - S_{N_0, N_1} f\|_p \le \|f-S_{N_0, N_1} f\|_2^{1-\theta} \| f -
S_{N_0, N_1} f \|_{q}^\theta \to 0 \text{ as } N_0 \to 0, \ N_1 \to \infty,
\]
and similarly for the partial sums $\tilde{S}_{N_0, N_1} f$. The estimate
\eqref{eqn:Littlewood-Paley_Bernstein_2} follows from 
\eqref{eqn:Littlewood-Paley_Lp_boundedness}, \eqref{eqn:Littlewood-Paley_derivative_estimates}, 
and the decomposition $P_{>N} f = \sum_{M > N} P_M f$. 

To prove the square function estimate, run the usual Khintchine's
inequality argument using Theorem~\ref{thm:hebisch} in place of the
Mikhlin multiplier theorem.
\end{proof}

\subsection{Local smoothing}
The following local smoothing lemma and its corollary will be needed when proving properties 
of the nonlinear profile decomposition in Section \ref{section:palais-smale}. 
\begin{lma}
If $u = e^{-itH} \phi, \ \phi \in \Sigma(\mf{R}^d)$, then
\begin{align*}
\int_I \int_{\mf{R}^d} | \nabla u(x)|^2 \langle R^{-1} (x-z)
\rangle^{-3} \, dx \, dt 
\lesssim R( 1 + |I| ) \| u\|_{L^\infty_tL^2_x} \| H^{1/2} 
u\|_{L^\infty_t L^2_x}.
\end{align*}
with the constant independent of $z \in \mf{R}^d$ and $R > 0$.
\end{lma}

\begin{proof}
We recall the Morawetz identity. Let $a$ be a sufficiently smooth
function of $x$; then for any $u$ satisfying the linear equation
$i\partial_t u = (-\tfr{1}{2}\Delta + V)u$, one has
\begin{equation}
\label{eqn:morawetz_identity}
\begin{split}
\partial_t \int \nabla a \cdot \opn{Im}( \overline{u}\nabla u) \, dx &= \int a_{jk} \opn{Re}(u_j 
\overline{u}
_k) \, dx - \tfr{1}{4} \int |u|^2 a_{jjkk} \, dx \\
&\quad - \tfr{1}{2} \int |u|^2
\nabla a \cdot \nabla V \, dx
\end{split}
\end{equation}
We use this identity with $a(x) =  \langle R^{-1}(x - z)
\rangle$ and $V = \fr{1}{2}|x|^2$, and compute
\begin{align*}
a_j(x) &= \fr{ R^{-2} (x_j - z_j)}{ \langle R^{-1}(x-z) \rangle}, \ a_{jk} (x) = R^{-2} 
\left[ \fr{\delta_{jk}}{ \langle R^{-1}(x-z) \rangle} - \fr{ R^{-2} (x_j - z_j)(x_k - z_k) }{ \langle R^{-1}(x-z)\rangle^3} \right]\\
\Delta^2 a(x) &\le -\fr{15 R^{-4}}{ \langle R^{-1}(x-z) \rangle^7}.
\end{align*}
As $\Delta^2a \le 0$, the right side of \eqref{eqn:morawetz_identity}
is bounded below by
\[
\begin{split}
 &R^{-2} \int \langle R^{-1} (x-z)
\rangle^{-1} \left[ |\nabla u|^2 - | \tfr{R^{-1}(x - z)}{ \langle R^{-1}(x
      - z)\rangle} \cdot \nabla u |^2 \right] \, dx - \tfr{1}{2R} \int |u|^2
  \tfr{ R^{-1}(x - z)}{ \langle R^{-1}(x-z) \rangle} \cdot x  \, dx\\
&\ge R^{-2} \int |\nabla u(x)|^2 \langle R^{-1}(x-z) \rangle^{-3} \, dx -
\tfr{R^{-1}}{2} \int |u|^2 |x| \, dx.
\end{split}
\]
Integrating in time and applying Cauchy-Schwarz, we get
\[
\begin{split}
R^{-2} \int_{I} \int_{\mf{R}^d} &\langle R^{-1}(x-z) \rangle^{-3} |\nabla
u(t, x)|^2 \, dx dt \\
&\lesssim \sup_{t \in I} R^{-1} \int \tfr{R^{-1}(x-z)}{\langle R^{-1}(x-z)
  \rangle} |u(t, x)| |\nabla u(t, x)| \, dx + \tfr{1}{2R} \int_I \int_{\mf{R}^d}|x|
|u|^2 \, dx dt\\
&\lesssim R^{-1} (1 + |I|) \|u\|_{L^\infty_t L^2_x} \|
H^{1/2}u\|_{L^\infty_t L^2_x}.
\end{split}
\]
This completes the proof of the lemma.
\end{proof}

\begin{cor}
\label{cor:local_smoothing_cor}
Fix $\phi \in \Sigma(\mf{R}^d)$. Then for all $T, R \le 1$, we have
\begin{align*}
\| \nabla e^{-itH} \phi \|_{L^{2}_{t,x} (|t-t_0| \le T, \ |x-x_0| \le R)}  
\lesssim T^{\fr{2}{3(d+2)}} 
R^{ \fr{3d+2}{ 3(d+2)}} \| \phi \|^{\fr{2}{3}}_{\Sigma} 
\| e^{-itH} \phi\|_{L^{\fr{2(d+2)}{d-2}}_{t,x}}
^{\fr{1}{3}}.
\end{align*}
When $d = 3$, we also have
 \begin{align*}
\| \nabla e^{-itH} \phi \|_{L^{\fr{10}{3}}_t L^{\fr{15}{7}}_x (  |t -t_0| \le T, 
\ |x-x_0| \le R)} \lesssim 
T^{\fr{23}{180}} R^{\fr{11}{45}} \| e^{-itH} \phi\|_{L^{10}_{t, x}}^{ 
\fr{5}{48}} \| \phi\|_{\Sigma}^{\fr{43}{48}}
\end{align*}
\end{cor}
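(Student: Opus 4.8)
The plan is to deduce both bounds from the preceding local smoothing lemma by means of a Littlewood--Paley decomposition, and then, for the $d=3$ mixed-norm inequality, to interpolate the resulting $L^2_{t,x}$ bound against a Strichartz estimate for $\nabla e^{-itH}\phi$. Throughout write $u = e^{-itH}\phi$ and $B = \{|t-t_0|\le T\}\times\{|x-x_0|\le R\}$, and recall $T,R\le 1$.

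For the first estimate, fix a dyadic $N\ge 1$ and split $\nabla u = \nabla P^H_{\le N}u + \nabla P^H_{>N}u$. Since the $P^H_M$ commute with $H$ and with $e^{-itH}$ and are bounded on every $L^p$, $1<p<\infty$, uniformly in $M$ (Theorem~\ref{thm:hebisch}), one has $\|P^H_M u\|_{L^{\fr{2(d+2)}{d-2}}_{t,x}}\lesssim\|u\|_{L^{\fr{2(d+2)}{d-2}}_{t,x}}$; combining this with the Bernstein inequalities (Lemma~\ref{lma:Littlewood-Paley_facts}, together with Lemma~\ref{lma:equivalence_of_norms} to replace $\nabla$ by $H^{1/2}$) and with H\"{o}lder over $B$ --- which costs $T^{\fr{2}{d+2}}R^{\fr{2d}{d+2}}$ in passing from $L^{\fr{2(d+2)}{d-2}}_{t,x}$ to $L^2_{t,x}(B)$ --- and summing the finitely many frequencies $1\le M\le N$ gives
\[
\|\nabla P^H_{\le N} u\|_{L^2_{t,x}(B)} \lesssim N\, T^{\fr{2}{d+2}}R^{\fr{2d}{d+2}}\,\|u\|_{L^{\fr{2(d+2)}{d-2}}_{t,x}}.
\]
For the high frequencies, note $P^H_{>N}\phi\in\Sigma$ with $\|P^H_{>N}\phi\|_\Sigma\lesssim\|\phi\|_\Sigma$ and, by \eqref{eqn:Littlewood-Paley_Bernstein_2}, $\|P^H_{>N}\phi\|_{L^2}\lesssim N^{-1}\|\phi\|_\Sigma$. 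Applying the local smoothing lemma to $P^H_{>N}\phi$ with $z=x_0$ and $I=\{|t-t_0|\le T\}$, and using $\langle R^{-1}(x-x_0)\rangle^{-3}\gtrsim 1$ on $B$ and $1+|I|\lesssim 1$, we obtain
\[
\|\nabla P^H_{>N}u\|_{L^2_{t,x}(B)}^2 \lesssim R\,\|P^H_{>N}\phi\|_{L^2}\|P^H_{>N}\phi\|_\Sigma \lesssim R\,N^{-1}\|\phi\|_\Sigma^2.
\]
Adding the two contributions and optimizing over dyadic $N\ge 1$ (using the balancing value when it is $\ge 1$, and $N=1$ otherwise, in which case the high-frequency term already dominates) yields a bound of the form $(R^{1/2}\|\phi\|_\Sigma)^{2/3}\,(T^{\fr{2}{d+2}}R^{\fr{2d}{d+2}}\|u\|_{L^{\fr{2(d+2)}{d-2}}_{t,x}})^{1/3}$, and since $\fr{1}{3}+\fr{2d}{3(d+2)}=\fr{3d+2}{3(d+2)}$ and $\fr{1}{3}\cdot\fr{2}{d+2}=\fr{2}{3(d+2)}$ this is exactly the asserted inequality.

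For the $d=3$ estimate, we interpolate the inequality just proved against a Strichartz bound for $\nabla u$. For any $L^2$-admissible pair $(q,r)$, Lemma~\ref{lma:equivalence_of_norms} and the $L^p$-boundedness of the Riesz transforms give $\|\nabla e^{-itH}\phi\|_{L^q_t L^r_x}\lesssim\|e^{-itH}H^{1/2}\phi\|_{L^q_t L^r_x}\lesssim\|H^{1/2}\phi\|_{L^2}\sim\|\phi\|_\Sigma$ by Lemma~\ref{lma:strichartz}; restricting to $B$ and applying H\"{o}lder in $t$ and in $x$ inserts factors $T^a R^b$. One then checks that some choice of admissible pair and interpolation weight makes the exponents of $L^{10/3}_t L^{15/7}_x$, of $T$, of $R$, of $\|u\|_{L^{10}_{t,x}}$, and of $\|\phi\|_\Sigma$ all match --- for instance, weight $\fr{5}{16}$ on the first estimate and weight $\fr{11}{16}$ on the bound at the admissible pair $(\fr{990}{83},\fr{2970}{1319})$ H\"{o}lder'd down to $(\fr{110}{23},\fr{330}{149})$ on $B$ --- giving $\|\nabla u\|_{L^{10/3}_t L^{15/7}_x(B)}\lesssim T^{23/180}R^{11/45}\|u\|_{L^{10}_{t,x}}^{5/48}\|\phi\|_\Sigma^{43/48}$. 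This last step is a routine linear computation with the exponents.

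The main point requiring care is that the local smoothing gain is intrinsically an improvement of the $L^2_x$-norm \emph{on a ball}, so one must avoid any step that delocalizes in $x$: applying a spatial Bernstein inequality to $\nabla P^H_{>N}u$, say, would recover the global $L^2$-norm and forfeit the power of $R$. This is why the lemma is applied directly to $P^H_{>N}\phi$ and why the loss in the low-frequency piece is produced by H\"{o}lder over $B$ rather than by a Strichartz inequality. It remains only to note that $P^H_{\le N}$ and $P^H_{>N}$ are well behaved on both the $\Sigma$-norm and the spacetime norm $L^{\fr{2(d+2)}{d-2}}_{t,x}$, which is immediate since they are spectral multipliers of $H$ --- hence commute with $e^{-itH}$ and with $H^{1/2}$, and are $L^p$-bounded by Theorem~\ref{thm:hebisch} --- so that once the first estimate is in hand the second is pure bookkeeping.
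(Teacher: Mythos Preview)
Your argument is correct and follows the same strategy as the paper: for the $L^2_{t,x}$ estimate you split at frequency $N$, bound the low-frequency piece by H\"older and Bernstein against the $L^{\fr{2(d+2)}{d-2}}_{t,x}$ norm, bound the high-frequency piece by the local smoothing lemma and the Bernstein estimate $\|P^H_{>N}\phi\|_{L^2}\lesssim N^{-1}\|\phi\|_\Sigma$, and optimize. This is exactly what the paper does (it only writes out the $d=3$ case, but the method is the same).

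For the $d=3$ mixed-norm estimate your route differs slightly from the paper's in bookkeeping. The paper applies H\"older directly to the mixed norm,
\[
\|\nabla u\|_{L^{10/3}_t L^{15/7}_x(E)}\le \|\nabla u\|_{L^2_{t,x}(E)}^{1/3}\,\|\nabla u\|_{L^5_t L^{20/9}_x(E)}^{2/3},
\]
bounds the second factor by $T^{1/8}\|\phi\|_\Sigma$ via H\"older in time from the admissible pair $(40/3,20/9)$, and inserts the $L^2$ estimate into the first factor. This produces the exponents $T^{23/180}R^{11/45}\|u\|_{L^{10}}^{1/9}\|\phi\|_\Sigma^{8/9}$, which is slightly stronger than the stated $\|u\|_{L^{10}}^{5/48}\|\phi\|_\Sigma^{43/48}$ since $\|u\|_{L^{10}}\lesssim\|\phi\|_\Sigma$. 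Your interpolation with weight $5/16$ and the admissible pair $(990/83,2970/1319)$ hits the stated exponents on the nose; the paper's choice is arithmetically cleaner but yours is equally valid. Either way, once the $L^2$ estimate is in hand the $d=3$ step is just H\"older--Strichartz bookkeeping.
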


\begin{proof}

  The proofs are fairly standard (see \cite{oberwolfach} or
  \cite{kvz_exterior_convex_obstacle}), and we present just the proof
  of the second claim, which is slightly more involved. Let $E$ the
  region $\{ |t-t_0| \le T, \ |x-x_0| \le R\}$. Norms which do
  not specify the region of integration are taken over the spacetime
  slab $\{|t-t_0| \le T\} \times \mf{R}^3$.  By H\"{o}lder,
\begin{align*}
\| \nabla e^{-itH} \phi \|_{L^{\fr{10}{3}}_t L^{\fr{15}{7}}_x(E) } \le \| \nabla 
e^{-itH} \phi\|_{L^{2}_{t, x}(E)}^{\fr{1}{3}} \| \nabla e^{-itH} \phi \|_{L^5_t L^{\fr{20}{9}}_x(E)}^{\fr{2}{3}}.
\end{align*}
By H\"{o}lder and Strichartz,
\begin{align}
\| \nabla e^{-itH} \phi \|_{L^5_t L^{\fr{20}{9}}_x(E)} \lesssim T^{\fr{1}{8}} \| 
\nabla e^{-itH} \phi \|_{L^{ \fr{40}
{3}}_t L^{ \fr{20}{9}}_x } \lesssim T^{\fr{1}{8}} \| \phi\|_{\Sigma}.  
\label{eqn:local_smoothing_cor_eqn_1}
\end{align}
We now
estimate $\| \nabla e^{-itH} \phi\|_{L^{2}_{t, x}}$. Let $N \in
2^{\mf{N}}$ be a dyadic number to be chosen later, and decompose
\begin{align*}
\| \nabla e^{-itH} \phi \|_{L^2_{t, x}(E)} \le \| \nabla e^{-itH} P^H_{\le N} \phi \|_{L^{2}_{t, x}(E)} + \| \nabla 
e^{-itH} P^H_{>N} \phi \|_{L^{2}_{t, x}(E)}.
\end{align*}
For the low frequency piece, apply H\"{o}lder and the Bernstein inequalities 
to obtain
\begin{align*}
\| \nabla e^{-itH} P^H_{\le N} \phi\|_{L^2_{t, x}} \lesssim
T^{\fr{2}{5}} R^{\fr{6}{5}} \| \nabla e^{-itH} P^H_{\le N}
\phi\|_{L^{10}_{t,x}} \lesssim T^{\fr{2}{5}} R^{\fr{6}{5}} N \|
e^{-itH} \phi \|_{L^{10}_{t,x}}.
\end{align*}
For the high-frequency piece, apply local smoothing and Bernstein:
\begin{align*}
\| \nabla e^{-itH} P^H_{>N} \phi\|_{L^2_{t, x}} \lesssim R^{\fr{1}{2}} \| P^H_{>N} \phi\|_{L^2}^{\fr{1}{2}} \| 
H^{\fr{1}{2}} \phi\|_{\Sigma}^{\fr{1}{2}} \lesssim R^{\fr{1}{2}} N^{-\fr{1}{2}} \| \phi\|_{\Sigma}.
\end{align*}
Optimizing in $N$, we obtain
\begin{align*}
\| \nabla e^{-itH} \phi\|_{L^{2}_{t, x}} \lesssim T^{\fr{2}{15}}
R^{\fr{11}{15}} \| e^{-it H} \phi\|_{L^{10}_{t,x}}^{\fr{1}{3}} \|
\phi\|_{\Sigma}^{\fr{2}{3}}. 
\end{align*}
Combining this estimate with \eqref{eqn:local_smoothing_cor_eqn_1} yields the 
conclusion of the corollary.
\end{proof}

\section{Local theory}
\label{section:local_theory}

We record some standard results concerning local-wellposedness
for \eqref{eqn:nls_with_quadratic_potential}. These are direct
analogues of the theory for the scale-invariant equation \eqref{eqn:ckstt_eqn}. By Lemma~\ref{lma:sobolev_embedding} and Corollaries
\ref{cor:fractional_chain_rule} and \ref{cor:fractional_product_rule},
we can use essentially the same proofs as in that case. We refer
the reader to \cite{claynotes} for those proofs.

\begin{prop}[Local wellposedness]
\label{prop:lwp}
Let $u_0 \in \Sigma(\mf{R}^d)$ and fix a compact time interval $0 \in I \subset 
\mf{R}$. Then there 
exists a constant $\eta_0 = \eta_0(d, |I|)$ such that whenever $\eta < \eta_0$ 
and 
\[
\| H^{\fr{1}{2}} e^{-itH} u_0 \|_{L^{\fr{2(d+2)}{d-2}}_t 
L^{\fr{2d(d+2)}{d^2+4}}_x (I \times \mf{R}^d)} 
\le \eta,
\]
there exists a unique solution $u: I \times \mf{R}^d \to \mf{C}$ to 
\eqref{eqn:nls_with_quadratic_potential} which satisfies the bounds
\begin{align*}
\| H^{\fr{1}{2}}  u \|_{L^{\fr{2(d+2)}{d-2}}_t L^{\fr{2d(d+2)}{d^2+4}}_x (I 
\times \mf{R}^d)} \le 2\eta \quad \text{and} \quad
\| H^{\fr{1}{2}} u \|_{S(I)} \lesssim \| u_0\|_{\Sigma} + 
\eta^{\fr{d+2}{d-2}}.
\end{align*}
\end{prop}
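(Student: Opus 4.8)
The plan is to run the standard Banach fixed-point argument for the energy-critical NLS, but using the operator $H^{1/2}$ in place of $(-\Delta)^{1/2}$ throughout. Define the solution map
\[
\Phi(u)(t) = e^{-itH} u_0 - i \int_0^t e^{-i(t-s)H} F(u(s)) \, ds,
\]
and seek a fixed point in the complete metric space
\[
\mc{B} = \{ u : \| H^{1/2} u\|_{L^q_t L^r_x(I \times \mf{R}^d)} \le 2\eta, \ \| H^{1/2} u\|_{S(I)} \le C_0( \| u_0\|_\Sigma + \eta^{\fr{d+2}{d-2}}) \},
\]
where $(q, r) = (\tfr{2(d+2)}{d-2}, \tfr{2d(d+2)}{d^2+4})$ is the standard energy-critical Strichartz pair, equipped with the metric $d(u, v) = \| u - v\|_{L^q_t L^r_x} + \| u - v\|_{S(I)}$ (note one closes the contraction at the level of $u$, not $H^{1/2}u$, since $F$ is only H\"older-continuous of exponent $\tfr{4}{d-2}$ when $d \ge 6$; for $d = 3, 4, 5$ one can also estimate differences at the $H^{1/2}$ level). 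The argument is standard once the nonlinear estimate is in place.

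The key step is the nonlinear estimate: one must show that
\[
\Big\| H^{1/2} \int_0^t e^{-i(t-s)H} F(u(s))\, ds \Big\|_{L^q_t L^r_x \cap S(I)} \lesssim_{|I|} \| H^{1/2} u\|_{L^q_t L^r_x}^{\fr{4}{d-2}} \, \| H^{1/2} u\|_{S(I)}.
\]
By the Strichartz estimate of Lemma~\ref{lma:strichartz}, with constant depending on $|I|$, it suffices to bound $\| H^{1/2} F(u)\|_{N(I)}$, and for this I would apply the fractional chain rule of Corollary~\ref{cor:fractional_chain_rule} with $\gamma = \tfr12$: $\| H^{1/2} F(u)\|_{L^{\rho'}_x} \lesssim \| F'(u)\|_{L^{a}_x} \| H^{1/2} u\|_{L^{b}_x}$ with $\tfr1{\rho'} = \tfr1a + \tfr1b$, choosing exponents so that, after applying Sobolev embedding (Lemma~\ref{lma:sobolev_embedding}) to convert $\| H^{1/2} u\|_{L^c_x}$ into $\| H^{1/2} u\|_{L^r_x}$ for the intermediate factors, the time integration closes by H\"older. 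Concretely $\| F'(u)\|_{L^a_x} = \| u\|_{L^{\fr{4a}{d-2}}_x}^{\fr{4}{d-2}}$, and one takes these $L^p_x$ norms in $L^{\fr{4}{d-2} \cdot \fr{2(d+2)}{d-2}}_t$ in time against one factor of $H^{1/2}u$ in $L^q_t$; balancing the exponents reproduces exactly the dual Strichartz pair $(q', r')$ on the left. This is the same computation as in the scale-invariant case in \cite{claynotes}, with the crucial point being that Corollary~\ref{cor:fractional_chain_rule}, Lemma~\ref{lma:sobolev_embedding}, and Lemma~\ref{lma:equivalence_of_norms} let $H^{1/2}$ play the role of $(-\Delta)^{1/2}$ verbatim.

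With the nonlinear estimate in hand, the smallness hypothesis $\| H^{1/2} e^{-itH} u_0\|_{L^q_t L^r_x} \le \eta$ and the Strichartz bound $\| H^{1/2} e^{-itH} u_0\|_{S(I)} \lesssim_{|I|} \| u_0\|_\Sigma$ show that $\Phi$ maps $\mc{B}$ into itself once $\eta < \eta_0(d, |I|)$ is small enough, and a parallel difference estimate shows $\Phi$ is a contraction on $(\mc{B}, d)$; the unique fixed point is the desired solution, and continuity in $\Sigma$ follows from the Strichartz bound on $H^{1/2} u$ together with the Duhamel formula. The main obstacle, as usual, is purely bookkeeping: verifying that the H\"older exponents in the chain-rule/Sobolev/time-integration chain actually close, and—when $d \ge 6$—handling the low regularity of $F$ by running the contraction at the level of $u$ rather than $H^{1/2} u$ (one still controls $\| u\|_{L^q_t L^r_x}$ by $\| H^{1/2} u\|_{L^q_t L^{\tilde r}_x}$ via Sobolev embedding with a slightly different spatial exponent). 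Since all of these are routine adaptations of \cite{claynotes}, we omit the details. Finally, the $|I|$-dependence of $\eta_0$ enters only through the Strichartz constant $C(|I|) \sim |I|^{1/2}$ from Lemma~\ref{lma:strichartz}, which is harmless on compact intervals.
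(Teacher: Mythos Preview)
Your proposal is correct and follows exactly the approach the paper indicates: the paper does not actually write out a proof but simply notes that, thanks to Lemma~\ref{lma:sobolev_embedding} and Corollaries~\ref{cor:fractional_chain_rule} and~\ref{cor:fractional_product_rule}, the standard contraction-mapping argument from the scale-invariant case (as in \cite{claynotes}) carries over verbatim with $H^{1/2}$ in place of $(-\Delta)^{1/2}$. Your sketch is in fact more detailed than what the paper provides, and correctly flags the usual $d\ge 6$ subtlety of closing the contraction at the level of $u$ rather than $H^{1/2}u$.
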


\begin{cor}[Blowup criterion]
\label{cor:blowup_criterion}
Suppose $u : (T_{\text{min}}, T_{\text{max}}) \times \mf{R}^d \to \mf{C}$ is a 
maximal lifespan 
solution to \eqref{eqn:nls_with_quadratic_potential}, and fix $T_{\text{min}} < t_0 < T_{\text{max}}$. 
If $T_{\text{max}} < 
\infty$, then 
\[
\| u\|_{L^{\fr{2(d+2)}{d-2}}_{t, x}( [t_0, T_{\text{max}} ))} = \infty.
\]
If $T_{\text{min}} > -\infty$, then
\[
\| u \|_{L^{\fr{2(d+2)}{d-2}}_{t, x} ( (T_{\text{min}}, t_0])} = \infty.
\]
\end{cor}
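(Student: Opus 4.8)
The plan is to derive the blowup criterion (Corollary~\ref{cor:blowup_criterion}) as a standard consequence of the local wellposedness statement in Proposition~\ref{prop:lwp}, using a contradiction argument. Suppose $T_{\max} < \infty$ but $\|u\|_{L^{\fr{2(d+2)}{d-2}}_{t,x}([t_0, T_{\max}))} < \infty$. I would first upgrade this finite spacetime norm to control of the full solution norm. By the fractional chain rule (Corollary~\ref{cor:fractional_chain_rule}), Sobolev embedding (Lemma~\ref{lma:sobolev_embedding}), and the Strichartz estimate (Lemma~\ref{lma:strichartz}) on the interval $[t_0, T_{\max})$ (whose length is finite, so the Strichartz constant is finite), one writes the Duhamel formula and estimates $\|H^{1/2}F(u)\|_{N([t_0, T_{\max}))}$ by a product of a power of $\|u\|_{L^{\fr{2(d+2)}{d-2}}_{t,x}}$ with $\|H^{1/2}u\|$ in an appropriate Strichartz space. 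A bootstrap/continuity argument, subdividing $[t_0, T_{\max})$ into finitely many subintervals on each of which the $L^{\fr{2(d+2)}{d-2}}_{t,x}$ norm of $u$ is small, then yields $\|H^{1/2}u\|_{S([t_0, T_{\max}))} < \infty$; in particular $\|H^{1/2}e^{-itH}u(t)\|_{L^{\fr{2(d+2)}{d-2}}_tL^{\fr{2d(d+2)}{d^2+4}}_x}$ over $[t, T_{\max})$ tends to $0$ as $t \uparrow T_{\max}$.

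Next I would establish that $u(t)$ converges in $\Sigma$ as $t \uparrow T_{\max}$. Using the Duhamel representation $u(t') - u(t) = (e^{-it'H} - e^{-itH})u(t_0) - i\int \cdots$, the already-obtained finiteness of $\|H^{1/2}u\|_{S([t_0, T_{\max}))}$, and the Strichartz/chain-rule estimates applied to the tail intervals $[t, t']$, one sees the family $\{u(t)\}$ is Cauchy in $\Sigma$ as $t \uparrow T_{\max}$; call the limit $u^* \in \Sigma$. Now apply Proposition~\ref{prop:lwp} with initial time $T_{\max}$ and data $u^*$: since $e^{-i(t-T_{\max})H}u^*$ has small spacetime norm on a sufficiently short interval $[T_{\max} - \delta, T_{\max} + \delta]$ (by continuity of $t \mapsto \|H^{1/2}e^{-i(\cdot)H}u^*\|$ in the relevant Strichartz norm and the fact that this norm vanishes as the interval shrinks to a point), we obtain a solution on $[T_{\max} - \delta, T_{\max} + \delta]$. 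By uniqueness in the local theory, this solution agrees with $u$ on $[T_{\max} - \delta, T_{\max})$, hence extends $u$ past $T_{\max}$, contradicting maximality. The argument for $T_{\min} > -\infty$ is identical after time reversal.

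The main obstacle, such as it is, is purely bookkeeping: one must check that the Strichartz constant $C(|I|)$ in Lemma~\ref{lma:strichartz} and the smallness threshold $\eta_0(d,|I|)$ in Proposition~\ref{prop:lwp} remain controlled on the finite interval $[t_0, T_{\max}]$ and its subintervals, and that the exponents in the fractional chain rule can be chosen so that $F(u) = |u|^{\fr{4}{d-2}}u$ is estimated by $\|u\|_{L^{\fr{2(d+2)}{d-2}}_{t,x}}^{\fr{4}{d-2}}\|H^{1/2}u\|$ in a dual Strichartz norm --- this is exactly the computation underlying Proposition~\ref{prop:lwp} itself. Since the excerpt explicitly says these results are "direct analogues of the theory for the scale-invariant equation" with "essentially the same proofs" (referring the reader to \cite{claynotes}), I would keep the writeup brief, citing the standard argument and emphasizing only the two points where the quadratic potential enters: the use of $H^{1/2}$ in place of $(-\Delta)^{1/2}$ (legitimized by Lemma~\ref{lma:equivalence_of_norms} and Corollary~\ref{cor:fractional_chain_rule}) and the $|I|$-dependence of the Strichartz constant, which is harmless since $T_{\max} < \infty$.
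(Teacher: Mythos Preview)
Your proposal is correct and follows exactly the standard argument to which the paper defers: the paper does not give its own proof of Corollary~\ref{cor:blowup_criterion}, instead noting that the local theory is a ``direct analogue'' of the scale-invariant case and referring to \cite{claynotes}. Your outline---bootstrap the finite $L^{\fr{2(d+2)}{d-2}}_{t,x}$ norm to finiteness of $\|H^{1/2}u\|_{S}$ via subdivision and the fractional chain rule, deduce a $\Sigma$-limit at $T_{\max}$, then relaunch from there via Proposition~\ref{prop:lwp}---is precisely that standard argument, and your remarks on the two adaptations (replacing $(-\Delta)^{1/2}$ by $H^{1/2}$ and tracking the $|I|$-dependence of the Strichartz constant) are the only points needing comment.
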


\begin{prop}[Stability]
\label{prop:stability}
Fix $t_0 \in I \subset \mf{R}$ an interval of unit length and let $\tilde{u}: 
I \times \mf{R}^d \to 
\mf{C}$ be 
an approximate solution to \eqref{eqn:nls_with_quadratic_potential} in the 
sense that 
\[
i\partial_t \tilde{u} = H u \pm |\tilde{u}|^{\fr{4}{d-2}} \tilde{u} + e
\]
for some function $e$. Assume that 
\begin{equation}
\label{eqn:stability_prop_hyp_1} 
\| \tilde{u}\|_{L^{\fr{2(d+2)}{d-2}}_{t,x}} \le L,
\quad \| H^{\fr{1}{2}}u \|_{L^\infty_t L^2_x} \le E,
\end{equation}
and that for some $0 < \varepsilon < \varepsilon_0(E, L)$ one has
\begin{equation}
\label{eqn:stability_prop_hyp_2} 
\| \tilde{u}(t_0) - u_0 \|_{\Sigma} + \| H^{\fr{1}{2}} e \|_{N(I)} \le \varepsilon,
\end{equation}
Then there exists a unique solution $u : I \times \mf{R}^d 
\to \mf{C}$ to \eqref{eqn:nls_with_quadratic_potential} with $u(t_0) = u_0$ 
and which further satisfies 
the estimates
\begin{equation}
\| \tilde{u} -  u \|_{L^{\fr{2(d+2)}{d-2}}_{t,x} } +
\| H^{\fr{1}{2}} (\tilde{u} - u) \|_{S(I)} \lesssim C(E, L) 
\varepsilon^c
\end{equation}
where $0 < c = c(d) < 1$ and $C(E, L)$ is a function which is nondecreasing
in each variable.
\end{prop}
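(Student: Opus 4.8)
The plan is to follow the now-standard perturbative argument (as in \cite{ckstt, claynotes}) for establishing stability of energy-critical NLS, with the only substantive changes being bookkeeping of the constant $C(|I|)$ coming from Lemma~\ref{lma:strichartz} and the systematic use of $H^{1/2}$ in place of $|\nabla|$, which is permissible because Lemma~\ref{lma:sobolev_embedding}, Corollary~\ref{cor:fractional_chain_rule}, and Corollary~\ref{cor:fractional_product_rule} give us exactly the Sobolev embedding, chain rule, and product rule we would otherwise get from the flat Laplacian. Since $|I|$ is assumed to be of unit length, the Strichartz constant is an absolute constant and can be absorbed throughout. Throughout I will work with the exponent pair $(q_0,r_0) = \bigl(\tfr{2(d+2)}{d-2}, \tfr{2d(d+2)}{d^2+4}\bigr)$ appearing in Proposition~\ref{prop:lwp}, together with the dual-admissible pair $(q_0', r_0')$, so that the nonlinearity $F(u) = \mu|u|^{4/(d-2)}u$ maps $L^{q_0}_t\Sigma^{r_0}_x$ into $L^{q_0'}_t\Sigma^{r_0'}_x$ with a power $\tfr{d+2}{d-2}$ via Corollary~\ref{cor:fractional_chain_rule} and H\"{o}lder.

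The argument proceeds in three steps. First, the \emph{small-data case}: assume in addition that $\|H^{1/2}\tilde u\|_{L^{q_0}_t L^{r_0}_x(I\times\mf{R}^d)} \le \delta$ for a small absolute $\delta = \delta(d)$. Writing $w = u - \tilde u$, one has $(i\partial_t - H)w = F(\tilde u + w) - F(\tilde u) - e$ with $w(t_0) = u_0 - \tilde u(t_0)$. Applying Lemma~\ref{lma:strichartz} (with $H^{1/2}$ commuted through, using that $H^{1/2}$ commutes with $e^{-itH}$) and the pointwise bound $|F(\tilde u + w) - F(\tilde u)| \lesssim |w|^{4/(d-2)}|w| + |\tilde u|^{4/(d-2)}|w|$ together with the fractional chain and product rules, one obtains a closed estimate of the form $\|H^{1/2}w\|_{S(I)} \lesssim \varepsilon + \delta^{4/(d-2)}\|H^{1/2}w\|_{S(I)} + \|H^{1/2}w\|_{S(I)}^{(d+2)/(d-2)}$, which a standard continuity/bootstrap argument closes to give $\|H^{1/2}w\|_{S(I)} \lesssim \varepsilon$ and $\|\tilde u - u\|_{L^{q_0}_{t,x}} + \|H^{1/2}(\tilde u - u)\|_{S(I)} \lesssim \varepsilon$, provided $\varepsilon \le \varepsilon_1(d)$. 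Existence of $u$ on all of $I$ follows from this bound together with the local theory (Proposition~\ref{prop:lwp}) and the blowup criterion (Corollary~\ref{cor:blowup_criterion}); uniqueness is likewise from Proposition~\ref{prop:lwp}.

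Second, the \emph{general case}: given the hypotheses \eqref{eqn:stability_prop_hyp_1}--\eqref{eqn:stability_prop_hyp_2}, one first upgrades the control $\|\tilde u\|_{L^{q_0}_{t,x}} \le L$ and $\|H^{1/2}\tilde u\|_{L^\infty_t L^2_x}\le E$ to a bound on $\|H^{1/2}\tilde u\|_{L^{q_0}_t L^{r_0}_x(I\times\mf{R}^d)} \le C(E,L)$; this is done by the usual exhaustion argument, partitioning $I$ into $J = J(E,L)$ subintervals $I_j$ on each of which $\|\tilde u\|_{L^{q_0}_{t,x}(I_j)}$ is a small absolute constant, running Strichartz with the error $e$ on each $I_j$, and summing. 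Then one subdivides $I$ once more into $\sim J$ further subintervals on each of which $\|H^{1/2}\tilde u\|_{L^{q_0}_t L^{r_0}_x} \le \delta$, applies the small-data result inductively from $t_0$ outward, at each stage controlling the new initial data mismatch $\|w(t_j)\|_\Sigma$ by the accumulated error from the previous stages (which grows at most geometrically in $j$, hence is bounded by $C(E,L)\varepsilon$), and telescopes. The exponent $c = c(d) \in (0,1)$ appears because, in order to keep the accumulating constants under control as $\varepsilon \to 0$, one is forced to trade a small power of $\varepsilon$ for the large constant $C(E,L)$; this is standard and I will simply track it.

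The main obstacle, such as it is, is purely organizational: ensuring that the $|I|$-dependent (here, absolute) Strichartz constant does not accumulate badly across the $O(J(E,L))$ subintervals, and that the $H^{1/2}$ fractional calculus closes on the pair $(q_0, r_0)$ exactly as $|\nabla|$ would on that pair for \eqref{eqn:ckstt_eqn}. Both are handled by the lemmas already cited: Lemma~\ref{lma:strichartz} gives the uniform-in-position-of-subinterval Strichartz constant once $|I| \le 1$ is fixed, and Corollaries~\ref{cor:fractional_chain_rule}--\ref{cor:fractional_product_rule} make the nonlinear estimates formally identical to the potential-free case. There is no new analytic difficulty beyond \cite{ckstt, claynotes}, and I will present the proof at the level of detail appropriate to a routine adaptation, referring to those sources for the computations that are unchanged.
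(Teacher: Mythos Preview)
Your proposal is correct and matches the paper's approach exactly: the paper does not give a proof but simply states that the result is a direct analogue of the stability theory for \eqref{eqn:ckstt_eqn}, with $H^{1/2}$ replacing $|\nabla|$ via Lemma~\ref{lma:sobolev_embedding} and Corollaries~\ref{cor:fractional_chain_rule}--\ref{cor:fractional_product_rule}, and refers the reader to \cite{claynotes} for details. Your outline of the small-data bootstrap followed by subdivision into $J(E,L)$ subintervals is precisely that standard argument.
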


\section{Concentration compactness}
\label{section:lpd}
In this section we discuss some concentration compactness 
results for the 
Strichartz inequality 
\[
\| e^{-itH} f\|_{L^{\fr{2(d+2)}{d-2}}_{t,x}(I \times \mf{R}^d)} 
\le C(|I|, d) \| f\|_{\Sigma},
\]
culminating in the linear profile decomposition of Proposition
\ref{prop:lpd}.  Our profile decomposition resembles that of
Keraani \cite{keraani_compactness_defect} in the sense that each
profile lives at a well-defined location in spacetime and has a
characteristic length scale. But since the function space
$\Sigma$ lacks both translation and scaling symmetry, the
precise definitions of our profiles will be more
complicated.

Keraani considered the analogous Strichartz estimate 
\[
\| e^{it\Delta} f\|_{L^{\fr{2(d+2)}{d-2}}_{t,x}(\mf{R} \times \mf{R}^d)} 
\lesssim \|f\|_{\dot{H}^1(\mf{R}^d)}.
\]
Recall that in that situation, if $f_n$ is a bounded sequence in
$\dot{H}^1$ with nontrivial linear evolution, then one has a
decomposition $f_n = \phi_n + r_n$ where $\phi_n = e^{it_n \Delta} G_n
\phi$, $G_n$ are certain unitary scaling and translation operators on
$\dot{H}^1$ (defined as in \eqref{eqn:inv_strichartz_frameops_1}),
and $\phi$ is a weak limit of $G_n^{-1} e^{-it_n \Delta} f_n$ in
$\dot{H}^1$. The ``bubble" $\phi_n$ is nontrivial and decouples from
the remainder $r_n$ in various norms. By applying this decomposition
inductively to the remainder $r_n$, one obtains the full collection of
profiles constituting $f_n$.

We follow the general presentation in
\cite{claynotes,oberwolfach}. Let $f_n \in \Sigma$ be a bounded
sequence.  Using a variant of Keraani's argument, we seek to obtain an
$\dot{H}^1$-weak limit $\phi$ in terms of $f_n$ and write $f_n =
\phi_n + r_n$ where $\phi_n$ is defined analogously as before by
``moving the operators onto $f_n$." However, we need to modify this
procedure in light of two issues.

The first problem is that while $f_n$ belong to $\Sigma$, an
$\dot{H}^1$ weak limit of a sequence like $G_n^{-1} e^{it_n H} f_n$
need only belong to $\dot{H}^1$. Indeed, the $\dot{H}^1$ isometries
$G_n^{-1}$ will in general have unbounded norm as operators on
$\Sigma$ because of the $|x|^2$ spatial weight, which penalizes very
wide functions. To define $\phi_n$, we need to introduce suitably
chosen spatial cutoffs to obtain functions in~$\Sigma$.

Secondly, to establish the various orthogonality assertions we must
understand how the linear propagator $e^{-itH}$ interacts with the 
$\dot{H}^1$ symmetries of translation and scaling in 
certain limits. We study this interaction in 
Section~\ref{subsection:convergence}. In particular, the convergence lemmas proved
there serve  as a substitute for the scaling relation
\[
e^{it\Delta} G_{n} = G_{n} e^{iN_n^2 t\Delta} \quad \text{where} \quad G_n \phi = 
N_n^{\fr{d-2}{2}}\phi(N_n(\cdot - x_n)).
\]
They can also be regarded as a precise form of the heuristic stated in
the introduction that as we scale the initial data to concentrate at a
point $x_0$, the potential $V(x) = |x|^2/2$ can be treated over short
time intervals as
essentially equal to the constant potential $V(x_0)$; hence for short
times the linear propagator $e^{-itH}$ can be approximated up to a
phase factor by the free particle propagator. In Section
\ref{section:localized_initial_data} we shall see a nonlinear
version of this statement.

\subsection{An Inverse Strichartz Inequality}
\label{subsection:inv_strichartz}

Unless indicated otherwise, $0 \in I$ in this section 
will denote a fixed interval of length at most $1$, and 
all spacetime norms will be taken over $I \times \mf{R}^d$.

Suppose $f_n$ is a sequence of functions in $\Sigma$ with nontrivial
linear evolution $e^{-itH} f_n$. The following refined Strichartz
estimate shows that there must be a ``frequency" $N_n$ which
makes a nontrivial contribution to the evolution.
\begin{prop}[Refined Strichartz]
\label{prop:refined_strichartz}
\[
\| e^{-itH} f\|_{L^{\fr{2(d+2)}{d-2}}_{t, x}} \lesssim
\|f\|_{\Sigma}^{\fr{4}{d+2}} \sup_N \| e^{-itH} P_N 
f\|_{L^{\fr{2(d+2)}{d-2}}_{t,x}}^{\fr{d-2}{d+2}}
\]
\end{prop}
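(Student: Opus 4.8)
The plan is to follow the standard argument (as in \cite{claynotes, oberwolfach}) for the refined Strichartz inequality in the energy-critical setting, adapted to the spectral Littlewood--Paley projections $P_N = P_N^H$. First I would use the square function estimate \eqref{eqn:Littlewood-Paley_square_function_estimate} together with the Sobolev-type estimates to reduce matters to a bilinear (or almost-orthogonality) bound for products $e^{-itH} P_M f \cdot e^{-itH} P_N f$ with $M \ne N$. Concretely, writing $q = \tfr{2(d+2)}{d-2}$ and $u = e^{-itH}f$, $u_N = e^{-itH} P_N f$, I would estimate
\[
\| u\|_{L^q_{t,x}}^q = \bigl\| \bigl( \textstyle\sum_N |u_N|^2\bigr)^{1/2}\bigr\|_{L^q_{t,x}}^q
\lesssim \Bigl\| \sum_{M \le N} |u_M|\,|u_N| \Bigr\|_{L^{q/2}_{t,x}}^{q/2},
\]
and then interpolate a favorable high-low frequency interaction estimate against the trivial bound coming from Strichartz and Bernstein. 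The key input is a gain of the form
\[
\| u_M u_N \|_{L^{q/2}_{t,x}} \lesssim \Bigl(\tfr{M}{N}\Bigr)^{\sigma} \| H^{1/2} P_M f\|_{L^2} \| H^{1/2} P_N f\|_{L^2}
\]
for some $\sigma > 0$ and $M \le N$; summing the geometric series in $M/N$ and using Cauchy--Schwarz in $N$ against $\sup_N \|u_N\|_{L^q_{t,x}}$ then closes the argument after raising to the appropriate power.

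To obtain the bilinear gain, I would exploit the lens transformation \eqref{eqn:lens_transformation}, $e^{-itH} f = e^{i\gamma(t)|x|^2} e^{\tfr{i\sin(t)\Delta}{2}}\bigl(e^{i\gamma(t)|x|^2} f\bigr)$, which is a pointwise-modulus-preserving conjugation of the free evolution. Since the prefactor $e^{i\gamma(t)|x|^2}$ has modulus one, it drops out of every $L^q_{t,x}$ norm, so on $I$ (where $|\sin t| \sim |t|$ and the change of variables $s = \sin t$ is bi-Lipschitz) the problem is equivalent to a bilinear estimate for the free propagator $e^{is\Delta/2}$ applied to the functions $g_N := P_N^H f$ modulated by $e^{i\gamma(t)|x|^2}$. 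The one subtlety is that the modulation mixes frequencies; however, since $\gamma(t) = O(|t|)$ on $I$ and $P_N^H$ is, by \eqref{eqn:Littlewood-Paley_pointwise_comparison} and Lemma \ref{lma:equivalence_of_norms}, comparable to a genuine frequency-$N$ (plus spatial-weight) localization, one checks that $e^{i\gamma(t)|x|^2} g_N$ is still essentially localized to frequencies $\lesssim N$, with rapidly decaying tails, so the standard bilinear Strichartz / transversality estimate for $e^{is\Delta/2}$ between frequency-separated pieces applies with an acceptable error. Alternatively, one can avoid the modulation issue entirely by working directly with the kernel \eqref{eqn:mehler} and running a $TT^*$ / stationary phase argument to prove the bilinear estimate for $e^{-itH}$ itself; the dispersive bound \eqref{eqn:dispersive_estimate} and Lemma \ref{lma:strichartz} supply the needed single-function endpoints.

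The main obstacle I anticipate is precisely this interaction between the $|x|^2$ weight and the frequency localization: unlike in the translation-invariant case, $P_N^H$ does not commute with $e^{-itH}$ in a clean scaling-covariant way, and the spectral projection is a genuinely nonlocal operator that does not exactly preserve Fourier support. The resolution is to lean on the already-established Littlewood--Paley machinery of Section \ref{subsection:LP_theory}---in particular the Bernstein inequalities of Lemma \ref{lma:Littlewood-Paley_facts} and the pointwise heat-kernel domination \eqref{eqn:Littlewood-Paley_pointwise_comparison}---to show that all commutator/tail terms are lower order, so that the argument reduces to the classical high-low bilinear estimate. The remaining steps (Khintchine, Hölder in the $L^{q/2}$ norm, summing the geometric series, and extracting the supremum) are routine bookkeeping.
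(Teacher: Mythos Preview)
Your overall structure (square function, then a high--low double sum) matches the paper, but you diverge at the key step and make things substantially harder than necessary. The paper \emph{never} proves or uses a bilinear Strichartz estimate for $e^{-itH}$. After arriving at
\[
\sum_{M \le N} \iint |u_M|^{\fr{d+2}{d-2}} |u_N|^{\fr{d+2}{d-2}}\, dx\, dt,
\]
it applies H\"{o}lder in a way that peels off $\|u_M\|_{L^q_{t,x}}^{4/(d-2)} \|u_N\|_{L^q_{t,x}}^{4/(d-2)}$ (these supply the $\sup_N$ factor) and leaves a product of two \emph{linear} Strichartz quantities, e.g.\ for $d \ge 6$ the factors $\|u_M\|_{L^{2(d+2)/(d-4)}_{t,x}}$ and $\|u_N\|_{L^{2(d+2)/d}_{t,x}}$. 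Bernstein in space (Lemma~\ref{lma:Littlewood-Paley_facts}) moves the first onto an admissible pair at the cost of $M^2$, and Strichartz bounds both by $\|f_M\|_{L^2}$, $\|f_N\|_{L^2}$. Then $M^2\|f_M\|_2\|f_N\|_2 = (M/N)\|H^{1/2}f_M\|_2\|H^{1/2}f_N\|_2$ gives a summable geometric factor, and Schur finishes. The entire argument stays inside the spectral calculus of $H$ and uses only Lemma~\ref{lma:Littlewood-Paley_facts}, Lemma~\ref{lma:strichartz}, and H\"{o}lder --- no lens transform, no transversality, no commutators.

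The obstacle you flagged is real and is precisely why your route is problematic rather than merely longer. The modulation $e^{i\gamma(t)|x|^2}$ shifts Fourier frequency by $2\gamma(t)x$, and $P_N^H f$ has spatial spread of order $N$ (Hermite functions at energy $\sim N^2$ live at $|x| \sim N$), so for $|t|$ of unit size the shift is comparable to $N$ itself. Your assertion that $e^{i\gamma(t)|x|^2} P_N^H f$ is ``still essentially localized to frequencies $\lesssim N$ with rapidly decaying tails'' is therefore not justified, and the reduction to the free bilinear estimate does not go through as stated. The paper's argument simply avoids ever comparing $P_N^H$ with Fourier localization.
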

\begin{proof}
  We quote essentially verbatim the proof of Refined Strichartz for
  the free particle propagator (\cite{oberwolfach} Lemma 3.1). Write
  $f_N$ for $P_N f$, where $P_N = P^H_N$ unless indicated
  otherwise. When $d \ge 6$, we apply the square function estimate
  \eqref{eqn:Littlewood-Paley_square_function_estimate}, H\"{o}lder,
  Bernstein, and Strichartz to get
  \begin{align*}
    &\| e^{-itH} f\|_{L_{t,x}^{ \fr{ 2(d+2) }{ d-2 }}}
    ^{\fr{2(d+2)}{d-2}} \sim \Bigl \| ( \sum_N | e^{-itH} f_N |^2
    )^{1/2} \Bigr \|_{\fr{2(d+2)}{d-2}}^{\fr{2(d+2)}{d-2}} = \iint (
    \sum_N |e^{-itH} f_N|^2 )^{\fr{d+2}{d-2}} \, dx \, dt\\
    &\lesssim \sum_{M \le N} \iint |e^{-itH} f_M|^{\fr{d+2}{d-2}}
    |e^{-itH}f_N|^{\fr{d+2}{d-2}} \, dx \, dt\\
    &\lesssim \sum_{M \le N} \|
    e^{-itH}f_M\|^{\fr{4}{d-2}}_{L_{t,x}^{\fr{ 2(d+2) }{d-2}}} \|
    e^{-itH} f_M\|_{L_{t,x}^{\fr{2(d+2)}{d-4}}} \|
    e^{-itH}f_N\|^{\fr{4}{d-2}}_{L_{t,x}^{\fr{2(d+2)}{d-2}}} \| e^{-itH} f_N 
\|_{L_{t,x}^{\fr{2(d+2)}{d}}}\\
    &\lesssim \sup_{N} \| e^{-itH}f_N
    \|^{\fr{8}{d-2}}_{L_{t,x}^{\fr{2(d+2)}{d-2}}} \sum_{M \le N} M^2
    \| e^{-itH} f_M \|_{L^{\fr{2(d+2)}{d-4}}_t L^{
        \fr{2d(d+2)}{d^2+8} }_x } \|f_N\|_{L^2}\\
    &\lesssim \sup_N \| e^{-itH}f_N \|_{ L_{t,x}^{\fr{2(d+2)}{d-2}}}^{
      \fr{8}{d-2}}
    \sum_{M \le N} M^2 \| f_M\|_{L^2_x} \| f_N\|_{L^2_x}\\
    &\lesssim \sup_N \| e^{-itH}f_N \|_{L_{t,x}^{ \fr{2(d+2)}{d-2}}}^{
      \fr{8}{d-2}} \sum_{M \le N} \fr{M}{N} \| H^{1/2} f_M\|_{L^2} \|
    H^{1/2}
    f_N\|_{L^2_x}\\
    &\lesssim \sup_{N} \| e^{-itH} f_N
    \|_{L_{t,x}^{\fr{2(d+2)}{d-2}}}^{\fr{8}{d-2}} \|f\|_{\Sigma}^2.
  \end{align*}
The cases $d = 3, 4, 5$ are handled similarly with some minor 
modifications 
in the applications of H\"{o}lder's inequality.
\end{proof}

The next proposition goes one step further and asserts that the
sequence $e^{-itH} f_n$ with nontrivial spacetime norm must in fact
contain a bubble centered at some $(t_n, x_n)$ with spatial scale
$N_n^{-1}$. We first introduce some vocabulary and notation which will
help make the presentation more systematic. Adapting terminology from
Ionescu-Pausader-Staffilani~\cite{MR3006640}, we define
\begin{define}
\label{def:frame}
A \emph{frame} is a sequence $(t_n, x_n, N_n) \in I \times \mf{R}^d
\times 2^{\mf{N}}$ conforming to one of the following scenarios:
\begin{enumerate}
\item \label{enum:frame_1} $N_n \equiv 1, \ t_n \equiv 0$, and $x_n \equiv
  0$.
\item \label{enum:frame_2} $N_n \to \infty$ and $ N_n^{-1} |x_n| \to r_\infty \in [0, \infty)$.
\end{enumerate}
\end{define}
Informally, the parameters $t_n, \ x_n, \ N_n$ will specify the
temporal center, spatial center, and (spatial) frequency of a
function. The condition that $|x_n| \lesssim N_n$ reflects the fact
that we only consider functions obeying some uniform bound in $\Sigma$, and such
functions cannot be centered arbitrarily far from the origin. We 
need to augment the frame $\{(t_n, x_n, N_n)\}$ with an auxiliary
parameter $N_n'$, which corresponds to a sequence of spatial cutoffs
adapted to the frame.
\begin{define}
\label{def:aug_frame}
An \emph{augmented frame} is a sequence $(t_n, x_n, N_n, N_n') \in I \times \mf{R}^d
\times 2^{\mf{N}} \times \mf{R}$ belonging to one of the following types:
\begin{enumerate}
\item \label{enum:aug_frame_1} $N_n \equiv 1, \ t_n \equiv 0, \ x_n \equiv
  0, \ N_n' \equiv 1$.
\item \label{enum:aug_frame_2} $N_n \to \infty, \ N_n^{-1} |x_n| \to r_\infty \in [0, \infty)$,
and either
\begin{enumerate}
\item \label{enum:aug_frame_2_1} $N_n' \equiv 1$ if  $r_\infty > 0$, or
\item \label{enum:aug_frame_2_2} 
$N_n^{1/2} \le N_n' \le N_n, \ N_n^{-1} |x_n| (\tfr{N_n}{N_n'})
\to 0$, and $\tfr{N_n}{N_n'} \to \infty$ if $r_\infty = 0$.
\end{enumerate}
\end{enumerate}
\end{define}

Given an augmented frame $(t_n, x_n, N_n, N_n')$, we define scaling and
translation operators on functions of space and of spacetime by
\begin{equation}
\label{eqn:inv_strichartz_frameops_1}
\begin{split}
(G_n \phi)(x) &= N_n^{\fr{d-2}{2}} \phi(N_n(x-x_n))\\
 (\tilde{G}_n f)(t, x) &= N_n^{\fr{d-2}{2}} f(N_n^2(t - t_n), N_n(x-x_n)).
\end{split}
\end{equation}
We also define spatial cutoff operators $S_n$ by
\begin{equation}
\label{eqn:inv_strichartz_frameops_2}
S_n \phi = \left\{ \begin{array}{cc} \phi, & \text{for frames of
      type} \ \ref{enum:aug_frame_1} \quad (\text{i.e.} \ N_n \equiv 1),\\
\chi(\tfr{N_n}{N_n'} \cdot) \phi, &  \text{for
  frames of type} \ \ref{enum:aug_frame_2} \quad (\text{i.e.} \ N_n \to \infty),
\end{array}\right.
\end{equation}
where $\chi$ is a smooth compactly supported function equal to $1$ on
the ball $\{|x| \le 1\}$. An easy computation yields the following
mapping properties of these operators:
\begin{equation}
\label{eqn:inv_strichartz_frameops_properties}
\begin{split}
\lim_{n \to \infty} S_n = I \ \text{strongly in} \ \dot{H}^1 \text{and in} \ \Sigma,\\
\limsup_{n \to \infty} \| G_n\|_{\Sigma \to \Sigma} < \infty.
\end{split}
\end{equation}

For future reference, we record a technical lemma that, as a
special case, asserts that the $\Sigma$ norm is
controlled almost entirely by the $\dot{H}^1$ norm for functions
concentrating near the origin.

\begin{lma}[Approximation]
\label{lma:approximation_lemma}
Let $(q, r)$ be an admissible pair of exponents with $2 \le r <
d$, and let $\mcal{F} = \{(t_n, x_n, N_n, N_n')\}$ be an augmented
frame of type 2.
\begin{enumerate}
\item Suppose $\mcal{F}$ is of type \ref{enum:aug_frame_2_1} in Definition
  \ref{def:aug_frame}. Then for $\{f_n\} \subseteq L^q_t
  H^{1, r}_x(\mf{R} \times \mf{R}^d)$, we have
\[
\limsup_{n} \| \tilde{G}_n S_n f_n \|_{L^q_t \Sigma^r_x} \lesssim
\limsup_{n} \| f_n \|_{L^q_t H^{1,r}_x}. 
\]
\item Suppose $\mcal{F}$ is of type \ref{enum:aug_frame_2_2} and $f_n \in L^q_t
  \dot{H}^{1,r}_x (\mf{R} \times \mf{R}^d)$. Then
\[
\limsup_{n} \| \tilde{G}_n S_n f_n \|_{L^q_t \Sigma^r_x} \lesssim 
\limsup_{n} \| f_n \|_{L^q_t \dot{H}^{1,r}_x}.
\]
\end{enumerate}
Here $H^{1, r}(\mf{R}^d)$ and $\dot{H}^{1, r}(\mf{R}^d)$ denote the
inhomogeneous and homogeneous $L^r$ Sobolev spaces, respectively,
equipped with the norms 
\[
\| f\|_{H^{1, r}} = \| \langle \nabla \rangle\|_{L^r(\mf{R}^d)}, \quad
\|f\|_{\dot{H}^{1, r}} = \| |\nabla| f\|_{L^r(\mf{R}^d)}.
\]
\end{lma}

\begin{proof}
By time translation invariance we may assume $t_n \equiv
0$. Using Lemma~\ref{lma:equivalence_of_norms}, we see that it suffices to bound
$\|\nabla \tilde{G}_n S_n f_n \|_{L^q_t L^r_x}$ and $\| |x|
\tilde{G}_n S_n f_n\|_{L^q_tL^r_x}$
separately. By a change of variables, the admissibility condition
on $(q, r)$, H\"{o}lder, and Sobolev embedding (which necessitates the
restriction $r < d$), we have
\[
\begin{split}
&\| \nabla \tilde{G}_n S_n f_n \|_{L^q_tL^r_x} = \| \nabla [
N_n^{\fr{d-2}{2}} f_n(N^2_nt, N_n(x-x_n)) \chi(N_n'(x-x_n))] \|_{L^q_t L^r_x}\\
&\lesssim  \| (\nabla f_n)(t,x)
\|_{L^q_tL^r_x} + \tfr{N_n'}{N_n} \| f_n(t, x)\|_{L^q_t L^r_x ( \mf{R} \times \{|x|
  \sim \fr{N_n}{N_n'} \})}\\
&\lesssim \| \nabla f_n\|_{L^q_t \dot{H}^{1, r}_x}.
\end{split}
\]
To estimate $\| |x| \tilde{G}_n S_n f_n\|_{L^q_t L^r_x}$ we
distinguish the two cases. Consider first the case in which $f_n \in
L^q_t H^{1,r}_x$. Using the bound $|x_n| \lesssim N_n$ and a change of
variables, we obtain
\[
\begin{split}
\| |x| \tilde{G}_n S_n f_n \|_{L^q_t L^r} &\lesssim N_n^{\fr{d}{2}} \|
f_n (N_n^2 t, N_n(x-x_n)) \|_{L^r} \lesssim \|f_n\|_{L^q_tL^r}
\lesssim \|f_n\|_{L^q_t H^{1,r}_x}.
\end{split}
\]

Now consider the second case where $f_n$ are merely assumed to lie in
$L^q_t \dot{H}^{1,r}_x$. For each $t$, we use H\"{o}lder and Sobolev
embedding to get
\[
\begin{split}
&\| |x| \tilde{G}_n S_n f_n\|_{L^r_x}^r = N_n^{\fr{dr}{2} - d - r}
\int_{|x| \lesssim \fr{N_n}{N_n'}} |x_n + N_n^{-1}x|^r |f_n(N_n^2t, x)|^r dx
\\
&\lesssim N_n^{\fr{dr}{2} - d} \left[ N_n^{-r} |x_n|^r +  N_n^{-2r}
  (\tfr{N_n}{N_n'})^{r} \right]
\int_{ |x| \lesssim \fr{N_n}{N_n'} } |f_n(N_n^2t, x)|^r dx\\
&\lesssim N_n^{\fr{dr}{2} - d} \left[  N_n^{-r} |x_n|^r (\tfr{N_n}{N_n'})^r
+ (N_n')^{-2r}\right] \| \nabla f_n(N_n^2t)\|_{L^r_x}^r.
\end{split}
\]
By the hypotheses on the parameter $N_n'$ in Definition
\ref{def:aug_frame}, the expression inside the brackets goes to $0$
as $n \to \infty$. After integrating in $t$ and performing a change of
variables, we conclude
\[
\| |x| \tilde{G}_n S_n f_n \|_{L^q_t L^r_x} \lesssim c_n
\|f_n\|_{L^q_t \dot{H}^{1,r}_x}
\]
where $c_n = o(1)$ as $n \to \infty$. This completes the proof of the lemma.
\end{proof}

\begin{prop}[Inverse Strichartz]
\label{prop:inv_strichartz}
Let $I$ be a compact interval containing $0$ of length at most $1$, and
suppose $f_n$ is a sequence of functions in $\Sigma(\mf{R}^d)$ satisfying
\[
0 < \varepsilon \le \| e^{-itH} f_n\|_{L^{\fr{2(d+2)}{d-2}}_{t,x}(I
  \times \mf{R}^d)}
\lesssim \|f_n\|_{\Sigma} \le A < \infty.
\]
Then, after passing to a subsequence, there exists an augmented frame 
\[
\mcal{F} =
\{ (t_n, x_n, N_n, N_n') \}
\]
and a sequence of functions $\phi_n\in
\Sigma$ such that one of the following holds:
\begin{enumerate}
\item \label{enum:inv_strichartz_case_1} $\mcal{F}$ is of type
  \ref{enum:aug_frame_1} (i.e. $N_n \equiv 1$) and $\phi_n = \phi$ where $\phi \in \Sigma$
  is a weak limit of $f_n$ in $\Sigma$.
\item \label{enum:inv_strichartz_case_2} $\mcal{F}$ is of type
  \ref{enum:aug_frame_2}, either $t_n \equiv 0$ or $N_n^2 t_n \to \pm
  \infty$, and $\phi_n = e^{it_n H} G_n S_n \phi$
  where 
$\phi \in \dot{H}^1(\mf{R}^d)$ is a weak limit of $G_n^{-1} e^{-it_n H} f_n$ in
$\dot{H}^1$. Moreover, if $\mcal{F}$ is of type
\ref{enum:aug_frame_2_1}, then $\phi$ also belongs to $L^2(\mf{R}^d)$.
\end{enumerate}
The functions $\phi_n$ have the following properties:
\begin{equation}
 \liminf_{n} \| \phi_n \|_{\Sigma} \gtrsim A \left( \tfr{\varepsilon}{A} 
\right)^{\fr{d(d+2)}{8}} \label{eqn:inv_strichartz_nontriviality_in_Sigma}
\end{equation}
\begin{equation}
 \lim_{n \to \infty} \|f_n\|_{\fr{2d}{d-2}}^{\fr{2d}{d-2}} - \|f_n - 
\phi_n\|_{\fr{2d}{d-2}}^{\fr{2d}{d-2}} - \| \phi_n 
\|_{\fr{2d}{d-2}}^{\fr{2d}{d-2}}  = 0. \label{eqn:inv_strichartz_potential_energy_decoupling}
\end{equation}
\begin{equation}
 \lim_{n \to \infty}  \|f_n\|_{\Sigma}^2 - \|  f_n - \phi_n\|_{\Sigma}^2 - 
\| \phi_n \|_{\Sigma}^2  = 0 \label{eqn:inv_strichartz_decoupling_in_Sigma}
\end{equation}

\end{prop}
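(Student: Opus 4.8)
The plan is to run the by-now-standard inverse-Strichartz argument of Keraani and Killip--Visan, inserting the two modifications forced by the potential: the Strichartz inequality~\eqref{eqn:intro_strichartz} has no scaling symmetry, so a concentrating bubble must be rescaled by hand, and a weak limit of rescaled translates of $f_n$ a priori lies only in $\dot H^1$, not in $\Sigma$. \textbf{Step 1 (a frequency and a spacetime point).} First apply the refined Strichartz estimate (Proposition~\ref{prop:refined_strichartz}) to the hypothesis $\|e^{-itH}f_n\|_{L^{\fr{2(d+2)}{d-2}}_{t,x}}\ge\varepsilon$ to get, after a subsequence, a dyadic frequency $N_n\ge 1$ with $\|e^{-itH}P_{N_n}f_n\|_{L^{\fr{2(d+2)}{d-2}}_{t,x}}\gtrsim A(\varepsilon/A)^{\fr{d+2}{d-2}}$. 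Interpolating this against the cheap bounds $\|P_{N_n}f_n\|_{L^2}\lesssim N_n^{-1}A$ (Bernstein~\eqref{eqn:Littlewood-Paley_derivative_estimates} and Lemma~\ref{lma:equivalence_of_norms}) and $\|e^{-itH}P_{N_n}f_n\|_{L^2_tL^{\fr{2d}{d-2}}_x}\lesssim N_n^{-1}A$ (Lemma~\ref{lma:strichartz} with the admissible pair $(2,\tfr{2d}{d-2})$), and using Bernstein~\eqref{eqn:Littlewood-Paley_Bernstein} to pass through $L^{\fr{2d}{d-2}}_{t,x}$, one produces a point $(t_n,x_n)\in I\times\mf{R}^d$ at which, once rescaled to unit frequency, the frequency-localized evolution has amplitude $\gtrsim A(\varepsilon/A)^{\beta}$ for an explicit $\beta$; tracking the powers of $\varepsilon/A$ through these H\"older and Bernstein steps yields the exponent $\tfr{d(d+2)}{8}$ of~\eqref{eqn:inv_strichartz_nontriviality_in_Sigma}. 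A further subsequence gives the dichotomy $\sup_n N_n<\infty$ or $N_n\to\infty$.

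\textbf{Step 2 (the non-concentrating case $\sup_n N_n<\infty$).} After a subsequence $N_n\equiv N_0$, and we are in frame type~\ref{enum:aug_frame_1}: no dilation or translation, and $\phi_n=\phi$ where $\phi$ is a weak-$\Sigma$ limit of $f_n$. The crucial structural fact is that $\Sigma\hookrightarrow L^2$ compactly (the weight gives uniform smallness of $L^2$ tails, and Rellich handles bounded regions), so along the subsequence $f_n\to\phi$ in $L^2$ and a.e., while $f_n\rightharpoonup\phi$ weakly in $\dot H^1$ and in $L^2(|x|^2\,dx)$. Then $P_{N_0}f_n\to P_{N_0}\phi$ in $L^2$, hence in $L^\infty$ by Bernstein~\eqref{eqn:Littlewood-Paley_Bernstein}; since $|x_n|$ stays bounded (the $\Sigma$-bound on $f_n$ forbids a bump of fixed height far from the origin) and $t_n\in I$, passing to the limit shows $e^{-it_\infty H}P_{N_0}\phi$ is nonzero at $\lim x_n$, which gives $\phi\neq 0$ with the bound~\eqref{eqn:inv_strichartz_nontriviality_in_Sigma}. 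Decoupling~\eqref{eqn:inv_strichartz_decoupling_in_Sigma} is then immediate from $\|f_n\|_\Sigma^2-\|f_n-\phi\|_\Sigma^2-\|\phi\|_\Sigma^2=2\,\mathrm{Re}\langle f_n-\phi,\phi\rangle_\Sigma$ and weak convergence $f_n-\phi\rightharpoonup 0$ in $\dot H^1$ and in $L^2(|x|^2\,dx)$, and~\eqref{eqn:inv_strichartz_potential_energy_decoupling} is the Br\'ezis--Lieb lemma (Lemma~\ref{lma:refined_fatou}) with $p=\tfr{2d}{d-2}$, valid since $\{f_n\}$ is bounded in $L^{\fr{2d}{d-2}}$ by Lemma~\ref{lma:sobolev_embedding}.

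\textbf{Step 3 (the concentrating case $N_n\to\infty$).} Put $g_n=G_n^{-1}e^{-it_nH}f_n$. Since $G_n$ is a $\dot H^1$-isometry and $e^{-itH}$ is a $\Sigma$-isometry---this is the identity $\|P(t)h\|_{L^2}^2+\|X(t)h\|_{L^2}^2=\|h\|_\Sigma^2$ read through $\|\nabla e^{-itH}h\|_{L^2}=\|P(t)h\|_{L^2}$, $\|xe^{-itH}h\|_{L^2}=\|X(t)h\|_{L^2}$---the $g_n$ are bounded in $\dot H^1$; extract a weak-$\dot H^1$ limit $\phi$. Passing to further subsequences, $N_n^{-1}|x_n|\to r_\infty$ exists and is finite (again by the $\Sigma$-bound), and $N_n^2 t_n$ converges in $[-\infty,+\infty]$; a finite limit of $N_n^2 t_n$ is absorbed into $\phi$ using the convergence analysis of Section~\ref{subsection:convergence}, so we may assume $t_n\equiv 0$ or $N_n^2|t_n|\to\infty$. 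The value of $r_\infty$ dictates the remaining bookkeeping: if $r_\infty>0$ we take $N_n'\equiv 1$ (type~\ref{enum:aug_frame_2_1}) and must verify $\phi\in L^2$, which holds because at distance $|x_n|\sim N_n$ the bound on $\|xf_n\|_{L^2}$ controls the $L^2$-mass of the bubble; if $r_\infty=0$ we choose $N_n'$ as in~\ref{enum:aug_frame_2_2} and insert the cutoff $S_n$, so that Lemma~\ref{lma:approximation_lemma} gives $\|xG_nS_n\phi\|_{L^2}\to 0$ and the bubble becomes effectively a pure $\dot H^1$ object. Set $\phi_n=e^{it_nH}G_nS_n\phi$; then~\eqref{eqn:inv_strichartz_nontriviality_in_Sigma} follows from $\|\phi_n\|_\Sigma=\|G_nS_n\phi\|_\Sigma\ge\|S_n\phi\|_{\dot H^1}\to\|\phi\|_{\dot H^1}$ (the $\Sigma$-isometry, the $\dot H^1$-scaling of $G_n$, and~\eqref{eqn:inv_strichartz_frameops_properties}) once $\|\phi\|_{\dot H^1}$ is bounded below.

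\textbf{Step 4 (the main obstacle, plus the decouplings).} The hard part is showing $\phi\ne 0$ in the concentrating case, i.e. transferring the pointwise lower bound $|(G_n^{-1}e^{-it_nH}P_{N_n}f_n)(0)|=|(G_n^{-1}P^H_{N_n}G_n)g_n(0)|\gtrsim A(\varepsilon/A)^\beta$ onto $\phi$, which is delicate because neither $P^H_{N_n}$ nor $e^{-itH}$ commutes with $G_n$. The resolution is to show that $G_n^{-1}P^H_{N_n}G_n$ converges to an ordinary frequency-one Littlewood--Paley projection---after rescaling by $N_n$ the potential becomes a harmless $N_n^{-2}|x|^2$---which can be proved directly from the heat-kernel comparisons~\eqref{eqn:heat_kernel_comparison} and~\eqref{eqn:Littlewood-Paley_pointwise_comparison} (it is a special case of the convergence lemmas of Section~\ref{subsection:convergence}). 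Then $P^\Delta_{\sim 1}g_n$ has amplitude $\gtrsim A(\varepsilon/A)^\beta$ at the origin; being bounded in $\dot H^1$ and localized to frequency $\sim 1$ these are equicontinuous and bounded in $L^\infty$, so the $L^2_{\mathrm{loc}}$-precompactness coming from $g_n\rightharpoonup\phi$ upgrades to uniform convergence on compacta, yielding $P^\Delta_{\sim 1}\phi(0)\gtrsim A(\varepsilon/A)^\beta$ and hence $\phi\ne 0$ with the quantitative bound. For~\eqref{eqn:inv_strichartz_decoupling_in_Sigma}, use that $e^{-itH}$ preserves the form $(a,b)\mapsto\langle H^{1/2}a,H^{1/2}b\rangle$ to write $\langle f_n-\phi_n,\phi_n\rangle_\Sigma=\langle G_n(g_n-S_n\phi),\,HG_nS_n\phi\rangle$; the $-\tfr12\Delta$-part contributes $\tfr12\langle\nabla(g_n-S_n\phi),\nabla S_n\phi\rangle\to 0$ by weak $\dot H^1$-convergence of $g_n-S_n\phi\rightharpoonup 0$ and $S_n\phi\to\phi$ in $\dot H^1$, while the $\tfr12|x|^2$-part is handled by Lemma~\ref{lma:approximation_lemma} and~\eqref{eqn:inv_strichartz_frameops_properties}---it vanishes when $r_\infty=0$ and, when $r_\infty>0$, converges to exactly the $r_\infty^2$-weighted $L^2$ term produced when $\|f_n\|_\Sigma^2$ and $\|f_n-\phi_n\|_\Sigma^2$ are expanded in rescaled coordinates, so it cancels. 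For~\eqref{eqn:inv_strichartz_potential_energy_decoupling}: when $t_n\equiv 0$, rescale by $G_n^{-1}$ (which preserves the $L^{\fr{2d}{d-2}}$-norm), note $g_n\to\phi$ a.e.\ and $S_n\phi\to\phi$ in $L^{\fr{2d}{d-2}}$, and apply Br\'ezis--Lieb; when $N_n^2|t_n|\to\infty$ the free dispersive decay (via the lens transform~\eqref{eqn:lens_transformation}) forces $\|\phi_n\|_{L^{\fr{2d}{d-2}}}\to 0$, and the decoupling reduces to the elementary estimate $\bigl|\,\|f_n\|_{\fr{2d}{d-2}}-\|f_n-\phi_n\|_{\fr{2d}{d-2}}\,\bigr|\le\|\phi_n\|_{\fr{2d}{d-2}}$.
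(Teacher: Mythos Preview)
Your outline is broadly correct and tracks the paper's argument closely: refined Strichartz to find $N_n$, interpolation to locate $(t_n,x_n)$, the dichotomy on $N_n$, the compact embedding $\Sigma\hookrightarrow L^2$ for the non-concentrating case, weak $\dot H^1$ extraction plus cutoffs $S_n$ in the concentrating case, and Br\'ezis--Lieb plus dispersive decay for~\eqref{eqn:inv_strichartz_potential_energy_decoupling}. Two points deserve correction.

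\textbf{Nontriviality of $\phi$ when $N_n\to\infty$.} Your plan to prove operator convergence $G_n^{-1}P^H_{N_n}G_n\to P^\Delta_{\sim 1}$ would work, but it is \emph{not} a special case of the convergence lemmas of Section~\ref{subsection:convergence} (those concern $e^{-itH}$, not the spectral projections), and carrying it out cleanly is more work than necessary. The paper bypasses this entirely via the pointwise kernel comparison~\eqref{eqn:Littlewood-Paley_pointwise_comparison}: from $|\tilde P^H_{N_n}e^{-it_nH}f_n(x_n)|\lesssim \tilde P^\Delta_{\le N_n}|e^{-it_nH}f_n|(x_n)+\tilde P^\Delta_{\le N_n/2}|e^{-it_nH}f_n|(x_n)$ one gets, after rescaling, $\langle |g_n|,\check\psi\rangle\gtrsim A(\varepsilon/A)^{d(d+2)/8}$ with $\check\psi=e^{\Delta}\delta_0$ Schwartz; since $|g_n|\rightharpoonup|\phi|$ weakly in $\dot H^1$ (a.e.\ convergence via Rellich plus $\|\nabla|g_n|\|_{L^2}\le\|\nabla g_n\|_{L^2}$), the pairing passes to the limit and H\"older gives the lower bound on $\|\phi\|_{\dot H^1}$. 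This is Lemma~\ref{lma:inv_strichartz_case_2_phi_nontrivial} and avoids any operator-limit analysis.

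\textbf{The $|x|^2$-part of the $\Sigma$-decoupling when $r_\infty>0$.} Your description here is garbled: the cross term $\langle xw_n,\,xG_nS_n\phi\rangle_{L^2}$ (with $w_n=e^{-it_nH}f_n-G_nS_n\phi$) must itself tend to zero --- there is no ``$r_\infty^2$-weighted $L^2$ term'' elsewhere for it to cancel against, since the Pythagorean identity already accounts for all squared norms. Lemma~\ref{lma:approximation_lemma} gives $\|xG_nS_n\phi\|_{L^2}\to 0$ only when $r_\infty=0$; when $r_\infty>0$ that norm stays bounded away from zero, so you need a different mechanism. The paper splits the integral into $\{|x-x_n|\le RN_n^{-1}\}$ and its complement: on the near region one rescales and uses $G_n^{-1}w_n\rightharpoonup 0$ in $\dot H^1$ together with Rellich; on the far region one applies Cauchy--Schwarz and controls $\|xG_nS_n\phi\|_{L^2(|x-x_n|>RN_n^{-1})}$, which is $\lesssim\|\phi\|_{L^2(|x|>R)}\to 0$ as $R\to\infty$ precisely because $\phi\in L^2$ (Lemma~\ref{lma:phi_additional_decay}). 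This is where the extra $L^2$ integrability of $\phi$ in the type~\ref{enum:aug_frame_2_1} case is actually used.
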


\begin{proof}
The proof will occur in several stages. First we identify the
parameters $t_n, x_n, N_n$, which define the location of the bubble
$\phi_n$ and its characteristic size, and quickly dispose of the case where $N_n \equiv
1$. The treatment of the case where $N_n \to
\infty$ will be more involved, and we proceed in two
steps. We define the profile $\phi_n$ and verify the assertions
\eqref{eqn:inv_strichartz_nontriviality_in_Sigma} and
\eqref{eqn:inv_strichartz_decoupling_in_Sigma}. Passing to a
subsequence, we may assume that the sequence $N_n^2 t_n$ converges in
$[-\infty, \infty]$. If the limit is infinite, decoupling 
\eqref{eqn:inv_strichartz_potential_energy_decoupling} in the
$L^{\fr{2d}{d-2}}$ norm will also
follow.

After a brief interlude in which we study certain operator limits, we
finish the case where the sequence $N_n^2 t_n$ tends to a finite
limit. We show that the time parameter $t_n$ can actually be redefined
to be identically zero after making a negligible correction to the
profile $\phi_n$, and verify in Lemma~\ref{lma:inv_strichartz_part_2}
that the modified profile satisfies property
\eqref{eqn:inv_strichartz_potential_energy_decoupling} in addition to
\eqref{eqn:inv_strichartz_nontriviality_in_Sigma} and
\eqref{eqn:inv_strichartz_decoupling_in_Sigma}. One can show
\emph{a posteriori} that the original profile $\phi_n$ also obeys this
last decoupling condition.

By Proposition \ref{prop:refined_strichartz}, there exist frequencies $N_n$ 
such that 
\[
\| P_{N_n} e^{-itH} f_n\|_{L^{\fr{2(d+2)}{d-2}}_{t,x}}
\gtrsim \varepsilon^{\fr{d+2}{4}} A^{-\fr{d-2}{4}}.
\]
The comparison of Littlewood-Paley projectors 
\eqref{eqn:Littlewood-Paley_projector_comparison} implies
\[
\| \tilde{P}_{N_n} e^{-itH} f_n \|_{L^{\fr{2(d+2)}{d-2}}_{t, x}}
\gtrsim \varepsilon^{\fr{d+2}{4}} A^{-\fr{d-2}{4}}
\]
where $\tilde{P}_N = e^{-H/N^2} - e^{-4H/N^2}$ denote the projections
based on the heat kernel. By H\"{o}lder, Strichartz, and Bernstein,
\begin{align*}
\varepsilon^{\fr{d+2}{4}} A^{-\fr{d-2}{4}} &\lesssim \| \tilde{P}_{N_n}
e^{-itH} f_n \|_{L^{\fr{2(d+2)}{d-2}}_{t,x}} \lesssim \|
\tilde{P}_{N_n} e^{-itH} f_n
\|_{L^{\fr{2(d+2)}{d}}_{t,x}}^{\fr{d-2}{d}} \| \tilde{P}_{N_n}
e^{-itH} f_n \|_{L^\infty_{t, x}}^{\fr{2}{d}}\\
&\lesssim (N_n^{-1}A)^{\fr{d-2}{d}} \| \tilde{P}_{N_n} e^{-itH} f_n\|_{L^\infty_{t,x}}^{\fr{2}{d}}.
\end{align*}
Therefore, there exist $(t_n, x_n) \in I \times \mf{R}^d$ such that 
\begin{equation}
\label{eqn:inv_strichartz_concentration}
|e^{-it_n H} \tilde{P}_{N_n} f_n (x_n) | \gtrsim N_n^{\fr{d-2}{2}}
A(\tfr{\varepsilon}{A})^{\fr{d(d+2)}{8}}.
\end{equation}
The parameters $t_n, x_n, N_n$ will determine the center and
width of a bubble. We observe first that the boundedness of $f_n$ in $\Sigma$ 
limits how far the bubble can live from the spatial origin. 
\begin{lma}
\label{lma:x_n_bounded_by_N_n}
We have 
\[
 |x_n| \le C_{A, \varepsilon} N_n.
\]
\end{lma}

\begin{proof}
Put $g_n = |e^{-it_n H} f_n|$. By the kernel bound 
\eqref{eqn:Littlewood-Paley_pointwise_comparison}, 
\[
 N_n^{\fr{d-2}{2}} A (\tfr{\varepsilon}{A})^{\fr{d(d+2)}{8}}
\lesssim |\tilde{P}_{N_n} e^{-it_nH} f_n  (x_n)| \lesssim \tilde{P}_{\le 
N_n}^{\Delta} g_n(x_n) + \tilde{P}_{\le N_n/2}^{\Delta} g_n(x_n).
\]
Thus one of the terms on the right side is at least half as large as the left side, and 
we only consider the case when
\[
 \tilde{P}_{\le N_n}^{\Delta} g_n (x_n) \gtrsim N_n^{\fr{d-2}{2}} 
A(\tfr{\varepsilon}{A})^{\fr{d(d+2)}{8}}
\]
since the argument with $N_n$ replaced by $N_n/2$ differs only cosmetically. 
Informally, $\tilde{P}_{\le N_n}^{\Delta} g_n$ is essentially constant over 
length scales of order $N_n^{-1}$, so if it is large at a point $x_n$ then it 
is large on the ball $|x-x_n| \le N_n^{-1}$. More precisely, when $|x-x_n| \le 
N_n^{-1}$ we have
\[
\begin{split}
\tilde{P}_{\le N_n/2}^{\Delta} g_n (x) &= 
\tfr{N_n^d}{ 2^d(2\pi)^{\fr{d}{2}}} 
\int g_n (x - y) e^{-\fr{N_n^2 |y|^2}{8}} dy \\
&= \tfr{N_n^d}{ 2^d(4\pi)^{\fr{d}{2}}} \int g_n(x_n - y) e^{-\fr{N_n^2 
|y + x - x_n|^2}{8}} dy\\
&\ge e^{-1} \tfr{N_n^d}{ 2^d(4\pi)^{\fr{d}{2}} } \int g_n(x_n - y) 
e^{- \fr{N_n^2 |y|^2}{2}} dy = e^{-1} 2^{-d} \tilde{P}_{\le N_n}^{\Delta} g_n 
(x_n)\\
&\gtrsim N_n^{\fr{d-2}{2}} A(\tfr{\varepsilon}{A})^{\fr{d(d+2)}{8}}.
\end{split}
\]
On the other hand, the mapping properties of the heat kernel imply that
 \[
\| \tilde{P}_{\le N_n/2}^{\Delta} g_n\|_{\Sigma} \lesssim (1 + N_n^{-2})A.  
 \]
Thus,
\[
 A \gtrsim \| \tilde{P}_{\le N_n/2}^{\Delta} g_n\|_{\Sigma} \gtrsim \| x 
\tilde{P}_{\le N_n/2}^{\Delta} g_n\|_{L^2( |x-x_n| \le N_n^{-1} )} \gtrsim 
|x_n| N_n^{-\fr{d}{2}} N_n^{\fr{d-2}{2}} A 
(\tfr{\varepsilon}{A})^{\fr{d(d+2)}{8}},
\]
which yields the claim.

\end{proof}

\textbf{Case 1}. Suppose the $N_n$ have a bounded subsequence, so that
(passing to a subsequence) $N_n \equiv N_\infty$. The
$x_n$'s stay bounded by 
\ref{lma:x_n_bounded_by_N_n}, so after passing to a subsequence we may assume $x_n \to x_\infty$. We may also
assume $t_n \to t_\infty$ since the interval $I$ is compact. 
The functions $f_n$ are bounded in $\Sigma$, hence (after passing to a subsequence)
converge weakly in $\Sigma$ to a function $\phi$. 

We show that $\phi$ is nontrivial in $\Sigma$. Indeed,
\[
\begin{split}
\langle \phi, e^{it_\infty H} \tilde{P}_{N_\infty} \delta_{x_\infty}
\rangle &= \lim_{n} \langle f_n, e^{it_\infty H} \tilde{P}_{N_\infty}
\delta_{x_\infty} \rangle \\
&= \lim_{n \to \infty} [e^{-it_n H} \tilde{P}_{N_\infty} f_n (x_n)
+ \langle f_n, (e^{it_\infty H} - e^{it_n H}) \tilde{P}_{N_\infty}
\delta_{x_n} \rangle \\
&+ \langle f_n, e^{it_\infty H} \tilde{P}_{N_n} (
\delta_{x_\infty} - \delta_{x_n}) \rangle].
\end{split}
\]
Using the heat kernel bounds
\eqref{eqn:Littlewood-Paley_pointwise_comparison} and the fact that,
by the compactness of the embedding $\Sigma \subset L^2$, the sequence
$f_n$ converges to $\phi$ in $L^2$, one verifies easily that the
second and third terms on the right side vanish. So
\[
|\langle \phi, e^{it_\infty H} \tilde{P}_{N_\infty} \delta_{x_\infty}
\rangle| = \lim_{n\to \infty} |e^{-it_n H} \tilde{P}_{N_\infty} f_n
(x_n)| \gtrsim N_\infty^{\fr{d-2}{2}} \varepsilon^{\fr{d(d+2)}{8}} A^{-\fr{(d-2)(d+4)}{8}}.
\]
On the other hand, by H\"{o}lder and
\eqref{eqn:Littlewood-Paley_pointwise_comparison},
\[
\begin{split}
|\langle \phi, e^{it_\infty H} \tilde{P}_{N_\infty} \delta_{x_\infty}
\rangle| &\le \| e^{-it_\infty H} \phi\|_{L^{\fr{2d}{d-2}}} \|
\tilde{P}_{N_\infty} \delta_{x_\infty} \|_{L^{\fr{2d}{d+2}}}\\
&\lesssim \| \phi\|_{\Sigma} N_\infty^{\fr{d-2}{2}} .
\end{split}
\]
Therefore
\[
\| \phi\|_{\Sigma} \gtrsim \varepsilon^{\fr{d(d+2)}{8}} A^{-\fr{(d-2)(d+4)}{8}}.
\]

Set
\[
\phi_n \equiv \phi,
\]
and define the augmented frame $(t_n, x_n, N_n, N_n') \equiv (0, 0, 1, 1)$. The
decoupling in $\Sigma$ 
\eqref{eqn:inv_strichartz_decoupling_in_Sigma} can be proved as in
Case 2 below, and we refer the reader to the argument detailed there.
%
It remains to establish decoupling in $L^{\fr{2d}{d-2}}$. As
the embedding $\Sigma \subset L^2$ is compact, the sequence $f_n$,
which converges weakly to $\phi \in \Sigma$, converges to $\phi$ strongly in
$L^2$. After passing to a subsequence we obtain convergence pointwise a.e. The decoupling
\eqref{eqn:inv_strichartz_potential_energy_decoupling} now follows from 
Lemma~\ref{lma:refined_fatou}. This completes the case where $N_n$
have a bounded subsequence.

\textbf{Case 2}. We now address the case where $N_n \to \infty$. The
main nuisance is that the weak limits $\phi$ will usually be merely in
$\dot{H}^1(\mf{R}^d)$, not in $\Sigma$, so defining the
profiles $\phi_n$ will require spatial cutoffs.

As the functions $N_n^{-(d-2)/2} (e^{-it_n H} f_n)( N_n^{-1} \cdot +
x_n)$ are bounded in $\dot{H}^1(\mf{R}^d)$, the sequence has a weak
subsequential limit
\begin{equation}
N_n^{-\fr{d-2}{2}} (e^{-it_n H} f_n) (N_n^{-1} \cdot + x_n)
\rightharpoonup \phi \text{ in } \dot{H}^1(\mf{R}^d). \label{eqn:inv_strichartz_case_2_phi_defn}
\end{equation}
By Lemma \ref{lma:x_n_bounded_by_N_n}, after passing to a further subsequence we may 
assume
\begin{equation}
\label{eqn:inv_strichartz_x_n_N_n_limit}
\lim_{n \to \infty} N_n^{-1}|x_n| = r_\infty < \infty \quad
\text{and} \quad \lim_{n \to
  \infty} N_n^2 t_n = t_\infty \in [-\infty, \infty].
\end{equation}
It will be necessary to distinguish the cases $r_\infty > 0$ and $r_\infty =
0$, corresponding to whether the frame $\{(t_n, x_n, N_n)\}$ is type
\ref{enum:aug_frame_2_1} or \ref{enum:aug_frame_2_2}, respectively.
\begin{lma}
\label{lma:phi_additional_decay}
If $r_\infty > 0$, the function $\phi$ defined in
  \eqref{eqn:inv_strichartz_case_2_phi_defn} also belongs to $L^2$.
\end{lma}

\begin{proof}
By \eqref{eqn:inv_strichartz_case_2_phi_defn} and the Rellich-Kondrashov
compactness theorem, for each $R \ge 1$ we have
\[
N_n^{-\fr{d-2}{2}} (e^{-it_n H} f_n) (N_n^{-1}\cdot + x_n) \to \phi \text{ in } L^2( \{|x| \le R\}).
\]
By a change of variables,
\[
\begin{split}
N_n^{-\fr{d-2}{2}} (e^{-it_n H} f_n) (N_n^{-1} \cdot +
x_n)\|_{L^2(|x| \le R)} &= N_n
\| e^{-it_n H} f_n \|_{L^2( |x-x_n| \le RN_n^{-1})} \\
&\lesssim \|
xe^{-it_n H} f_n\|_{L^2}
\end{split}
\]
whenever $|x_n| \ge \tfr{N_n r_\infty}{2}$ and $RN_n^{-1} \le \tfr{r_\infty}{10}$, so we have uniformly in $R \ge 1$ that
\[
\limsup_{n} \| N_n^{-\fr{d-2}{2}} (e^{-it_nH} f_n)(N_n^{-1} \cdot + x_n)
\|_{L^2(|x| \le R)} \lesssim \sup_{n} \|e^{-it_nH} f_n \|_{\Sigma} \lesssim 1.
\]
Therefore $\| \phi\|_{L^2} = \lim_{R \to \infty} \| \phi\|_{L^2(|x|
  \le R)} \lesssim 1$. 
\end{proof}

\begin{rmk}
The claim fails if $r_\infty = 0$. Indeed, if $\phi \in
\dot{H}^1(\mf{R}^d) \setminus L^2(\mf{R}^d)$, then $f_n = N_n^{(d-2)/2} \phi(N_n\cdot ) \chi(\cdot)$
are bounded in $\Sigma$, and $N_n^{-(d-2)/2} f_n(N_n^{-1} \cdot) =
 \phi(\cdot)  \chi(N_n^{-1} \cdot) $ converges strongly in $\dot{H}^1$ to
$\phi$. 

\end{rmk}

Next we prove that $\phi$ is nontrivial in $\dot{H}^1$.
\begin{lma} 
\label{lma:inv_strichartz_case_2_phi_nontrivial}
$\| \phi\|_{\dot{H}^1} \gtrsim A \left(\tfr{
      \varepsilon}{A} \right)^{\fr{d(d+2)}{8}}$. 
\end{lma}

\begin{proof}
From \eqref{eqn:Littlewood-Paley_pointwise_comparison} and
\eqref{eqn:inv_strichartz_concentration}, 
\[
N_n^{\fr{d-2}{2}} A \left( \tfr{\varepsilon}{A}
\right)^{\fr{d(d+2)}{8}} \lesssim \tilde{P}_{\le N_n}^{\Delta}
|e^{-it_n H} f_n|(x_n) + \tilde{P}_{\le N_n/2}^{\Delta} |e^{-it_n H}
f_n| (x_n),
\]
so one of the terms on the right is at least half the left
side. Suppose first that
\[
\tilde{P}_{\le N_n}^{\Delta} |e^{-it_n H} f_n| (x_n) \gtrsim
N_n^{\fr{d-2}{2}} A \left( \tfr{\varepsilon}{A}
\right)^{\fr{d(d+2)}{8}}.
\]
Put $\check{\psi} = \tilde{P}_{\le 1}^\Delta \delta_0 = e^{\Delta}
\delta_0$. Since $\check{\psi}$ is Schwartz,
\[
| \langle |\phi|, \check{\psi} \rangle_{L^2}| \le \| \phi\|_{\dot{H}^1}
\| \check{\psi}\|_{ \dot{H}^{-1}} \lesssim \| \phi\|_{\dot{H}^1}.
\]
On the other hand, as the absolute values
$N_n^{-\fr{d-2}{2}} |e^{-it_n H} f_n| (N_n^{-1}\cdot + x_n)$ converge
weakly in $\dot{H}^1$ to $|\phi|$, we have
\begin{align*}
\langle |\phi|, \check{\psi} \rangle_{L^2} &= \lim_{n} \langle
N_n^{-\fr{d-2}{2}} | e^{-it_n H} f_n| (N_n^{-1} \cdot + x_n),
\check{\psi} \rangle_{L^2}\\
&= \lim_{n} \tilde{P}_{\le N_n}^\Delta |e^{-it_n H} f_n| (x_n) \gtrsim
A \left( \tfr{\varepsilon}{A} \right)^{\fr{d(d+2)}{8}}.
\end{align*}
from which the claim follows. Similarly if
\[
\tilde{P}_{\le N_n/2}^{\Delta} |e^{-it_n H} f_n| (x_n) \gtrsim
N_n^{\fr{d-2}{2}} A \left( \tfr{\varepsilon}{A}
\right)^{\fr{d(d+2)}{8}},
\]
then we obtain $\| \phi\|_{\dot{H}^1} \sim \| \phi(2 \cdot)
\|_{\dot{H}^1} \gtrsim N_n^{\fr{d-2}{2}} A \left( \tfr{\varepsilon}{A}
\right)^{\fr{d(d+2)}{8}}$. 
\end{proof}

Having extracted a nontrivial bubble $\phi$, we are ready to define
the $\phi_n$. The basic idea is to undo the operations applied
to $f_n$ in the definition \eqref{eqn:inv_strichartz_case_2_phi_defn}
of $\phi$. However, we need to first apply a spatial cutoff to
embed $\phi$ in $\Sigma$. 

With the frame $\{(t_n, x_n, N_n)\}$ defined according to
\eqref{eqn:inv_strichartz_concentration}, we form the augmented frame
$\{ (t_n, x_n, N_n, N_n') \}$ with the cutoff parameter $N_n'$ chosen
according to the second case of Definition \ref{def:aug_frame}. Let
$G_n, \ S_n$ be the $\dot{H}^1$ isometries and spatial cutoff operators
associated to $\{(t_n, x_n, N_n, N_n')\}$. Set
\begin{equation}
\label{eqn:inv_strichartz_case_2_phi_n_defn}
\phi_n = e^{it_n H} G_n S_n \phi = e^{it_n H} [ N_n^{\fr{d-2}{2}}
\phi(N_n(\cdot - x_n)) \chi(N_n' (\cdot - x_n) )].
\end{equation}
We now verify that the $\phi_n$ satisfy the various properties claimed in the
proposition.

\begin{lma}
\label{lma:inv_strichartz_case_2_phi_n_Sigma_bounds}
$ A \left(\tfr{\varepsilon}{A} \right)^{\fr{d(d+2)}{8}} \lesssim
\liminf_{n \to \infty} \| \phi_n\|_{\Sigma} \le \limsup_{n \to \infty} \|
\phi_n\|_{\Sigma} \lesssim 1$.
\end{lma}

\begin{proof}

By the
definition of the $\Sigma$ norm and a change of variables,
\[
\| \phi_n\|_{\Sigma} = \| G_n S_n\|_{\Sigma} \ge \| S_n \phi \|_{\dot{H}^1}.
\]
Hence Lemma~\ref{lma:inv_strichartz_case_2_phi_nontrivial} and the
remarks following Definition \ref{def:aug_frame} together imply the lower bound
\[
\liminf_{n} \| \phi_n\|_{\Sigma} \gtrsim A \left( \tfr{\varepsilon}{A} \right)^{\fr{d(d+2)}{8}}.
\]
The upper bound follows immediately from the case $(q, r) = (\infty, 2)$ in 
Lemma~\ref{lma:approximation_lemma}.
\end{proof}

We verify the decoupling property
\eqref{eqn:inv_strichartz_decoupling_in_Sigma}. By the Pythagorean
theorem,
\[
\begin{split}
\| f_n\|_{\Sigma}^2 - \| f_n - \phi_n\|_{\Sigma}^2 - \| \phi_n\|_{\Sigma}^2 
&= 2\opn{Re}( \langle f_n - \phi_n , \phi_n\rangle_{\Sigma})\\
&= 2\opn{Re}( \langle e^{-it_n H}f_n - G_n S_n \phi, G_n S_n\\
&= 2\opn{Re} ( \langle w_n, G_n S_n \phi \rangle_{\Sigma}).
\end{split}
\]
where $w_n = e^{-it_n H} f_n - G_n S_n \phi$.
By definition, 
\[\langle w_n , G_n S_n \phi \rangle_{\Sigma} = \langle
w_n, G_n S_n \phi \rangle_{\dot{H}^1} + \langle x w_n,
x G_n S_n \phi \rangle_{L^2}.
\]
From
\eqref{eqn:inv_strichartz_frameops_properties} and the definition
\eqref{eqn:inv_strichartz_case_2_phi_defn} of $\phi$, it follows that
\[
G_n^{-1} w_n \to 0 \quad \text{weakly in} \quad \dot{H}^1 \quad
\text{as} \quad n \to \infty.
\]
Hence
\[
\begin{split}
\lim_{n\to \infty} \langle w_n, G_n S_n \phi \rangle_{\dot{H}^1} =
\lim_{n\to \infty} \langle G_n^{-1} w_n,
S_n \phi \rangle_{\dot{H}^1} = \lim_{n\to\infty} \langle G_n^{-1} w_n, \phi
\rangle_{\dot{H}^1} = 0.
\end{split}
\]

We turn to the second component of the inner product. Fix $R > 0$, and estimate
\[
\begin{split}
&| \langle xw_n, xG_n S_n \phi \rangle_{L^2}| \\
&\le \int_{\{|x-x_n| \le RN_n^{-1}\}} |xw_n|
|xG_n S_n \phi| \, dx + \int_{\{|x-x-n| >RN_n^{-1} \}}
|xw_n| |xG_n S_n \phi| \, dx\\
&= (I) + (II)
\end{split}
\]
Perform a change of variable
and drop the spatial cutoff $S_n$, keeping in mind the bound $|x_n|
\lesssim N_n$, to obtain
\[
(I) \lesssim \int_{|x| \le R} |G_n^{-1} w_n| |\phi| \, dx \to 0 \quad
\text{as} \quad n \to \infty.
\]
Next, apply Cauchy-Schwartz and the upper bound of Lemma
\ref{lma:inv_strichartz_case_2_phi_n_Sigma_bounds} to see that
\[
\begin{split}
&(II)^2 \lesssim
\int_{\{|x-x_n| > RN_n^{-1} \}} |xG_n S_n \phi|^2 \, dx \\
&\lesssim N_n^{-2}
\int_{ R \le |x| \lesssim \fr{N_n}{N_n'} } |x_n + N_n^{-1} x|^2
|\phi(x)|^2 dx\\
&\lesssim (N_n^{-2} |x_n|^2 + N_n^{-2} (N_n')^{-2} ) \int_{R \le |x|
  \lesssim \fr{N_n}{N_n'}} |\phi(x)|^2 \, dx.
\end{split}
\]
Suppose that the frame $\{(t_n, x_n, N_n)\}$ is of type \ref{enum:aug_frame_2_1}, so that
$\lim_{n} N_n^{-1} |x_n| > 0$. By Lemma
\ref{lma:phi_additional_decay} and dominated convergence, the right
side above is bounded by
\[
\int_{R \le |x| } |\phi(x)|^2 \, dx \to 0 \quad \text{as} \quad R \to \infty,
\]
uniformly in $n$.
If instead $\{(t_n, x_n, N_n)\}$ is of type
\ref{enum:aug_frame_2_2}, use H\"{o}lder to see that the right side is bounded by
\[
\begin{split}
( N_n^{-2} |x_n| (\tfr{N_n}{N_n'})^2 + (N_n')^{-4}) \| \phi\|_{L^{\fr{2d}{d-2}}}.
\end{split}
\]
By Sobolev embedding and the construction of the parameter $N_n'$ in
Definition \ref{def:aug_frame}, the above vanishes as $n \to \infty$.
In either case, we obtain
\[
\lim_{R \to \infty} \limsup_{n \to \infty} (II) = 0.
\]

Combining the two estimates and choosing $R$ arbitrarily large, we
conclude as required that
\[
\lim_{n \to \infty} |\langle xw_n, xG_n S_n \phi\rangle_{L^2} | = 0.
\]

To close this subsection, we verify the $L^{\fr{2d}{d-2}}$ decoupling property
\eqref{eqn:inv_strichartz_potential_energy_decoupling} when $N_n^2 t_n
\to \pm \infty$. Assume first that the $\phi$ appearing in the
definition \eqref{eqn:inv_strichartz_case_2_phi_n_defn} of $\phi_n$
has compact support. By the dispersive estimate
\eqref{eqn:dispersive_estimate} and a change of variables, we have
\[
\lim_{n \to \infty} \| \phi_n\|_{L^{\fr{2d}{d-2}}} \lesssim |t_n|^{-1} \|
G_n \phi\|_{L^{\fr{2d}{d+2}}} \lesssim (N_n^2 |t_n|)^{-1} \|
\phi\|_{L^{\fr{2d}{d+2}}} = 0.
\]
The claimed decoupling follows immediately.

For general $\phi$ in $H^1$ or $\dot{H}^1$ (depending on whether
$\lim_{n} N_n^{-1} |x_n|$ is positive or zero), select
$\psi^\varepsilon \in C^\infty_c$ converging to $\phi$ in the
appropriate norm as $\varepsilon \to 0$. Then for all $n$ large
enough, we have
\[
\| \phi_n\|_{L^{\fr{2d}{d-2}}} \le \| e^{it_n H} G_n S_n[\phi -
\psi^\varepsilon] \|_{L^{\fr{2d}{d-2}}} +\| e^{it_n H} G_n S_n
\psi^\varepsilon \|_{L^{\fr{2d}{d-2}}},
\]
and we once again have decoupling by 
Lemmas~\ref{lma:sobolev_embedding} and \ref{lma:approximation_lemma}, and the special case just proved.
\end{proof}

\subsection{Convergence of linear propagators}
\label{subsection:convergence}
To complete the proof of Proposition~\ref{prop:inv_strichartz}, we
need a more detailed understanding of how the linear propagator
$e^{-itH}$ interacts with the $\dot{H}^1$-symmetries $G_n$ associated
to a frame in certain limiting situations. The lemmas proved in this
section are heavily inspired by the discussion surrounding \cite[Lemma
5.2]{2d_klein_gordon}, in which the authors prove analogous results
relating the linear propagators of the 2D Schr\"{o}dinger equation and
the complexified Klein-Gordon equation $-iv_t + \langle \nabla \rangle
v = 0$.  We begin by introducing some
terminology due to Ionescu-Pausader-Staffilani~\cite{MR3006640}.
\begin{define}
\label{def:equiv_frames}
We say two frames $\mcal{F}^1 = \{ (t_n^1, x_n^1, N_n^1)\}$ and $\mcal{F}^2 =
\{ (t_n^2, x_n^2, N_n^2)\}$ (where the superscripts are indices, not
exponents) are \emph{equivalent} if
\[
\tfr{N_n^1}{N_n^2} \to R_\infty \in (0, \infty), \ N_n^1(x_n^2 - x_n^1) \to
x_\infty \in \mf{R}^d, \  (N_n^1)^2 (t_n^1 - t_n^2) \to
t_\infty \in \mf{R}.
\]
If any of the above statements fails, we say that $\mcal{F}_1$ and $\mcal{F}_2$
are \emph{orthogonal}. Note that replacing the  $N_n^1$ in the second and third 
expressions above by $N_n^2$ yields an equivalent definition of orthogonality.

\end{define}

\begin{rmk}
If $\mcal{F}^1$ and $\mcal{F}^2$ are equivalent, it follows from the above
definition that they must be of the same type in
Definition \ref{def:frame}, and that $\lim_{n} (N_n^1)^{-1} |x_n^1|$
and $\lim_n (N_n^2)^{-1} |x_n^2|$ are either both zero or both positive.
\end{rmk}

One interpretation of the following lemma and its corollary is that
when acting on functions concentrated at a point,
$e^{-itH}$ can be approximated for small $t$ by
regarding the $|x|^2/2$ potential as essentially constant on the
support of the initial data; thus one obtains a modulated free particle
propagator 
$e^{-\fr{it|x_0|^2}{2}} e^{\fr{it\Delta}{2}}$
where $x_0$
is the spatial center of the initial data.

\begin{lma}[Strong convergence]
\label{lma:strong_convergence}
Suppose 
\[\mcal{F}^M = (t_n^M, x_n, M_n), \quad \mcal{F}^N = (t_n^N,
y_n, N_n)\]
are equivalent frames. Define
\[
\begin{split}
R_\infty &= \lim_{n \to \infty} \tfr{M_n}{N_n}, \ t_\infty = \lim_{n \to
  \infty} M_n^2 (t_n^M - t_n^N), \ x_\infty = \lim_{n \to \infty} M_n(y_n-x_n)\\
r_\infty &= \lim_n M_n^{-1} |x_n|  = \lim_{n} M_n^{-1} |y_n|.
\end{split}
\]
Let $G_n^M, G_n^N$ be the scaling and translation operators attached
to the frames $\mcal{F}^M$ and $\mcal{F}^N$ respectively. Then
$(e^{-it_n^N H} G_n^N)^{-1} e^{-it_n^M H} G_n^M$ converges in the
strong operator topology on $B(\Sigma, \Sigma)$ to the
operator $U_\infty$ defined by
\[
U_\infty \phi = e^{-\fr{it_\infty (r_\infty)^2}{2}}
R_\infty^{\fr{d-2}{2}} [e^{ \fr{it_\infty \Delta}{2} } \phi]( R_\infty
\cdot + x_\infty).
\]
\end{lma}

\begin{proof}
  If $M_n \equiv 1$, then by the definition of a frame we must have
  $\mcal{F}^M = \mcal{F}^N = \{ (1, 0, 0) \}$, so the claim is
  trivial. Thus we may assume that $M_n \to \infty$.  Put $t_n = t_n^M
  - t_n^N$. Using Mehler's formula \eqref{eqn:lens_transformation}, we
  write
\[
\begin{split}
&(e^{-it_n^N H} G_n^N)^{-1} e^{-it_n^M H} G_n^M = (G_n^N)^{-1} e^{-it_nH} G_n^M \phi (x) \\
&= (\tfr{M_n}{N_n})^{\fr{d-2}{2}} e^{ i\gamma(t_n)|y_n + N_n^{-1}x|^2}
e^{\fr{iM_n^2 \sin(t_n)\Delta}{2}} [ e^{i\gamma(t_n)|x_n + M_n^{-1}
  \cdot|^2} \phi] (\tfr{M_n}{N_n} x + M_n(y_n - x_n)).
\end{split}
\]
where
\[
\gamma(t) = \tfr{\cos t - 1}{2\sin t} = -\tfr{t}{4} + O(t^3).
\]
We see that
\[
e^{i\gamma(t_n) |x_n + M_n^{-1} \cdot |^2} \phi \to
e^{-\fr{it_\infty (r_\infty)^2}{4}} \phi \quad \text{in} \quad \Sigma.
\]
Indeed,
\[
\begin{split}
&\|\nabla[ e^{i\gamma(t_n) |x_n + M_n^{-1} \cdot|^2} \phi  - e^{
  i\gamma(t_n) |x_n|^2} \phi] \|_{L^2} = \| \nabla_x [( e^{i\gamma(t_n)
  [M_n^{-2} |x|^2 + M_n^{-1} x_n \cdot x] } - 1)\phi] \|_{L^2}\\
&\lesssim \| t_n (M_n^{-2} x + M_n^{-1} x_n) \phi\|_{L^2} + \| (e^{
  i\gamma(t_n) [ M_n^{-2} |x|^2 + 2M_n^{-1} x_n \cdot x]} - 1)\nabla
\phi \|_{L^2}\\
&\lesssim |t_n| M_n^{-2} \| x\phi\|_{L^2} + |t_n| |x_n| M_n^{-1}
\|\phi\|_{L^2} + \| (e^{ i\gamma(t_n)[M_n^{-2} |x|^2 2M_n^{-1} x_n
  \cdot x]} - 1) \nabla \phi\|_{L^2}.
\end{split}
\]
As $n \to \infty$, the first two terms vanish because $\|x\phi\|_2 +
\|\phi\|_{2} \lesssim \|\phi\|_{\Sigma}$, while the third term vanishes
by dominated convergence. Dominated convergence also implies that
\[
\|x[ e^{i\gamma(t_n)|x_n + M_n^{-1}x|^2}\phi - e^{
  i\gamma(t_n)|x_n|^2}\phi] \|_{L^2} \to 0 \text{ as } n \to \infty.
\]
On the other hand, since
\[
\gamma(t_n) |x_n|^2 = -\tfr{M_n^2 t_n M_n^{-2} |x_n|^2}{4} +
O(M_n^{-4}) \to -\fr{ t_\infty (r_\infty)^2}{4},
\]
it follows that
\[
\| e^{i\gamma(t_n) |x_n + M_n^{-1} \cdot|^2} \phi - e^{-\fr{i t_\infty
    (r_\infty)^2}{4}} \phi\|_{\Sigma} \to 0
\]
as claimed. Now, using that $e^{\fr{iM_n^{2} \sin(t_n)\Delta}{2}} \to
e^{\fr{it_\infty \Delta}{2}}$ in the strong operator topology on
$B(\Sigma, \Sigma)$, we obtain
\[
e^{\fr{iM_n^{2} \sin(t_n) \Delta}{2}} [ e^{i\gamma(t_n) |x_n + M_n^{-1} \cdot|^2}
\phi] \to e^{-\fr{it_\infty (r_\infty)^2}{4}} e^{ \fr{it_\infty
    \Delta}{2}} \phi \text{ in } \Sigma,
\]
and the full conclusion quickly follows.
\end{proof}

\begin{cor}
\label{cor:strong_conv_cor}
Let $\{ (t_n^M, x_n, M_n, M_n')\}$ and $\{ (t_n^N, y_n, N_n, N_n')\}$
be augmented frames such that $\{(t_n^M, x_n, M_n)\}$ and $\{(t_n^N,
y_n, N_n)\}$ are equivalent. Let $S_n^M, \ S_n^N$ be the associated
spatial cutoff operators as defined in
\eqref{eqn:inv_strichartz_frameops_2}. Then
\begin{equation}
\label{eqn:strong_conv_cor_eqn1}
\lim_{n \to \infty} \| e^{-it_n^M H} G_n^M S_n^M \phi - e^{-it_n^N H}
G_n^N S_n^N U_\infty \phi
\|_{\Sigma} = 0
\end{equation}
and
\begin{equation}
\label{eqn:strong_conv_cor_eqn2}
\lim_{n\to \infty} \| e^{-it_n^M H} G_n^M S_n^M \phi - e^{-it_n^N H} G_n^N U_\infty S_n^N \phi
\|_{\Sigma} = 0
\end{equation}
whenever $\phi \in H^1$ if the frames conform to case
\ref{enum:aug_frame_2_1} and $\phi \in \dot{H}^1$ if they conform to
case \ref{enum:aug_frame_2_2} in Definition \ref{def:aug_frame}. 
\end{cor}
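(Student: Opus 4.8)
The plan is to deduce both identities from the strong-operator convergence furnished by Lemma~\ref{lma:strong_convergence}, together with a density argument; the real content is a handful of $n$-uniform operator bounds. The case of type-\ref{enum:frame_1} frames is trivial (then $G_n^M=G_n^N$, $S_n^M=S_n^N$, and $U_\infty$ all equal the identity), so assume $M_n,N_n\to\infty$. Since $C^\infty_c(\mf{R}^d)$ is dense in $H^1$ (in case~\ref{enum:aug_frame_2_1}) and in $\dot H^1$ (in case~\ref{enum:aug_frame_2_2}), it suffices to prove \eqref{eqn:strong_conv_cor_eqn1} and \eqref{eqn:strong_conv_cor_eqn2} for $\phi\in C^\infty_c$, once we know that each of the three maps $\phi\mapsto e^{-it_n^M H}G_n^M S_n^M\phi$, $\phi\mapsto e^{-it_n^N H}G_n^N S_n^N U_\infty\phi$, and $\phi\mapsto e^{-it_n^N H}G_n^N U_\infty S_n^N\phi$ is bounded from $H^1$ (resp.\ $\dot H^1$) into $\Sigma$, uniformly in $n$. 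Now $e^{-itH}$ is a $\Sigma$-isometry, and $U_\infty$ --- a unimodular constant times the composition of the \emph{finite}-time free propagator $e^{it_\infty\Delta/2}$, a dilation by $R_\infty\in(0,\infty)$, and a translation --- is bounded on both $H^1$ and $\dot H^1$; hence the first two maps are controlled by Lemma~\ref{lma:approximation_lemma} (applied with $(q,r)=(\infty,2)$ to time-independent functions), which bounds $\|G_n S_n(\,\cdot\,)\|_\Sigma$ by the $H^1$ norm in case~\ref{enum:aug_frame_2_1} and by the $\dot H^1$ norm in case~\ref{enum:aug_frame_2_2}.

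The third map is the main obstacle: in case~\ref{enum:aug_frame_2_2}, $G_n^N$ by itself is \emph{not} bounded from $\dot H^1$ into $\Sigma$. I would unwind $G_n^N U_\infty = e^{-it_\infty r_\infty^2/2}\,D_n\,e^{it_\infty\Delta/2}$, where $D_n$ is the dilation by $R_\infty N_n\sim M_n$ about the point $z_n:=y_n-(R_\infty N_n)^{-1}x_\infty$, and estimate $\|D_n e^{it_\infty\Delta/2}S_n^N g\|_\Sigma$ directly. The $\dot H^1$ component is preserved isometrically; for the $L^2(|x|^2\,dx)$ component one uses that $S_n^N g$ is supported in a ball of radius $\sim N_n/N_n' = o(N_n)$ (see \eqref{eqn:inv_strichartz_case_2_phi_n_defn}), that the identity $x\,e^{it_\infty\Delta/2}=e^{it_\infty\Delta/2}(x-it_\infty\nabla)$ with $|t_\infty|<\infty$ shows this spatial localization survives the free flow, and that the size restrictions $N_n'\ge N_n^{1/2}$ and $N_n^{-1}|y_n|(N_n/N_n')\to 0$ in Definition~\ref{def:aug_frame} force all the resulting terms to be $\lesssim\|g\|_{\dot H^1}$ (indeed the weighted contribution is $o(1)\|g\|_{\dot H^1}$). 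In case~\ref{enum:aug_frame_2_1} this map is routine, each factor being uniformly bounded on $H^1$ (for $G_n^N:H^1\to\Sigma$ one needs only $|y_n|\lesssim N_n$).

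It remains to treat $\phi=\psi\in C^\infty_c$. Then $\psi\in\Sigma$, and $U_\infty\psi$ --- the finite-time free evolution of a test function, rescaled and translated, hence Schwartz --- also lies in $\Sigma$; moreover $S_n^M\psi=\psi$ and $S_n^N\psi=\psi$ for all large $n$ since $\psi$ has compact support. So, up to $o(1)$ errors, \eqref{eqn:strong_conv_cor_eqn1} and \eqref{eqn:strong_conv_cor_eqn2} reduce to $\|e^{-it_n^M H}G_n^M\psi-e^{-it_n^N H}G_n^N S_n^N U_\infty\psi\|_\Sigma$ and $\|e^{-it_n^M H}G_n^M\psi-e^{-it_n^N H}G_n^N U_\infty\psi\|_\Sigma$, respectively. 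Applying $e^{-it_n^N H}G_n^N$ --- uniformly bounded on $\Sigma$ by \eqref{eqn:inv_strichartz_frameops_properties} --- to the conclusion of Lemma~\ref{lma:strong_convergence} yields $\|e^{-it_n^M H}G_n^M\psi-e^{-it_n^N H}G_n^N U_\infty\psi\|_\Sigma\to 0$, which is \eqref{eqn:strong_conv_cor_eqn2}. And \eqref{eqn:strong_conv_cor_eqn1} then follows once one checks $\|e^{-it_n^N H}G_n^N(U_\infty\psi-S_n^N U_\infty\psi)\|_\Sigma\to 0$; since $U_\infty\psi$ is Schwartz, $U_\infty\psi-S_n^N U_\infty\psi$ is supported where $|x|\gtrsim N_n/N_n'\to\infty$, so its $H^1$ norm tends to $0$ while its weighted norm stays $\le\|xU_\infty\psi\|_{L^2}<\infty$, and the elementary bound $\|G_n^N h\|_\Sigma^2\lesssim\|h\|_{\dot H^1}^2+\|h\|_{L^2}^2+N_n^{-4}\|xh\|_{L^2}^2$ (which uses $|y_n|\lesssim N_n$ and $N_n\ge 1$) shows this tail vanishes. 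This completes the plan.
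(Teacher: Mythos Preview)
Your proof is correct and follows the paper's strategy: establish the claims for $\phi\in C^\infty_c$ via Lemma~\ref{lma:strong_convergence} together with the uniform $\Sigma$-bound on $G_n^N$ from \eqref{eqn:inv_strichartz_frameops_properties}, then extend by density using Lemma~\ref{lma:approximation_lemma}. Your explicit verification of the uniform bound for $\phi\mapsto G_n^N U_\infty S_n^N\phi$ in case~\ref{enum:aug_frame_2_2} (where $G_n^N$ alone is not bounded $\dot H^1\to\Sigma$) is more careful than the paper, which disposes of \eqref{eqn:strong_conv_cor_eqn2} with the phrase ``similar considerations.''
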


\begin{proof}
  As before, the result is immediate if $M_n \equiv 1$ since all
  operators in sight are trivial. Thus we may assume $M_n \to
  \infty$. Suppose first that $\phi \in
  C^\infty_c$. Using the unitarity of $e^{-itH}$ on $\Sigma$, the
  operator bounds \eqref{eqn:inv_strichartz_frameops_properties}, and the
  fact that $S_n^M\phi = \phi$ for all $n$
  sufficiently large, we write the left side
  of \eqref{eqn:strong_conv_cor_eqn1} as
\[
\begin{split}
 &\| G_n^N [
(G_n^N)^{-1} e^{-i(t_n^M - t_n^N) H} G_n^M \phi - S_n^N U_\infty \phi]
\|_{\Sigma} \\
&\lesssim \| (G_n^N)^{-1} e^{-i(t_n^M - t_n^N) H} G_n^M \phi - S_n^NU_\infty
\phi\|_{\Sigma}\\
&\lesssim \| (G_n^N)^{-1} e^{-i(t_n^M - t_n^N) H} G_n^M \phi -
U_\infty \phi \|_{\Sigma} + \|(1 - S_n^N)U_{\infty} \phi\|_{\Sigma}
\end{split}
\]
which goes to zero by Lemma~\ref{lma:strong_convergence} and dominated
convergence. This proves \eqref{eqn:strong_conv_cor_eqn1} under the
additional hypothesis that $\phi \in C^\infty_c$. 

We now remove this crutch and take  $\phi \in H^1$ or $\dot{H}^1$
depending on whether the frames are of type
\ref{enum:aug_frame_2_1} or \ref{enum:aug_frame_2_2} in Definition \ref{def:aug_frame}, respectively. For each
$\varepsilon > 0$, choose $\phi^{\varepsilon} \in C^\infty_c$ such
that $\| \phi - \phi^{\varepsilon} \|_{H^1} < \varepsilon$ or $\| \phi
- \phi^{\varepsilon} \|_{\dot{H}^1} < \varepsilon$, respectively. Then
\[
\begin{split}
  &\| e^{-it_n^M H} G_n^M S_n^M \phi - e^{-it_n^N H} G^N_n S_n^N
  U_\infty \phi\|_{\Sigma} \le \| e^{-it_n^M H} G_n^M S^M_n (\phi
  -\phi^{\varepsilon})
  \|_{\Sigma} \\
  &+ \| e^{-it_n H} G_n^M S_n^M \phi^{\varepsilon} - e^{-it_n^N H}
  G_n^N S_n^N U_\infty \phi^{\varepsilon}\|_{\Sigma} + \| e^{-it_n^N
    H} G_n^N S_n^N U_{\infty} (\phi - \phi^{\varepsilon}) \|_{\Sigma}
\end{split}
\]
In the limit as $n \to \infty$, the middle term vanishes and we are
left with a quantity at most a constant times
\[
\limsup_{n \to \infty} \|G_n^M S_n^M (\phi - \phi^{\varepsilon})
\|_{\Sigma} + \limsup_{n \to \infty} \| G_n^N S_n^N U_\infty( \phi
- \phi^{\varepsilon} ) \|_{\Sigma}.
\]
Applying Lemma~\ref{lma:approximation_lemma} and using the mapping
properties of $U_{\infty}$ on $\dot{H}^1$ and $H^1$, we see that 
\[
\limsup_{n \to \infty} \| e^{-it_nH} G_n^M S_n^M \phi - e^{it_n^N H} G_n^N S_n^N
U_\infty \phi\|_{\Sigma} \lesssim \varepsilon
\] 
for every $\varepsilon > 0$. This proves the claim
\eqref{eqn:strong_conv_cor_eqn1}. Similar considerations handle the
second claim
\eqref{eqn:strong_conv_cor_eqn2}.
\end{proof}

\begin{lma}
\label{lma:approximate_adjoint}
Suppose the frames $\{(t_n^M, x_n, M_n)\}$ and $\{ (t_n^N,
y_n, N_n)\}$ are equivalent. Put $t_n = t_n^M - t_n^N$. Then for $f, g
\in \Sigma$ we have
\[
 \langle (G_n^N)^{-1} e^{-it_n H} G_n^M f, g \rangle_{\dot{H}^1} = \langle 
f, (G_n^M)^{-1} e^{it_n H} G_n^N g \rangle_{\dot{H}^1} + R_{n} (f, g),
\]
where $|R_n (f, g)| \le C|t_n| \| G_n^M f\|_{\Sigma} \| G_n^N g\|_{\Sigma}$.  
\end{lma}

\begin{rmk}
We regard this as an ``approximate adjoint'' formula; note that
$e^{-itH}$ is not actually defined on all of $\dot{H}^1$. 
It follows from Lemma \ref{lma:strong_convergence} that
\[
\lim_{n \to \infty} \langle (G_n^N)^{-1} e^{-it_n H} G_n^M f, g
\rangle_{\dot{H}^1} = \lim_{n \to \infty} \langle f, (G_n^M)^{-1}
e^{it_n H} G_n^N g \rangle_{\dot{H}^1}
\]
for fixed $f, g \in \Sigma$. The content of this lemma lies in the
quantitative error bound.
\end{rmk}

\begin{proof}
From the 
identities \eqref{eqn:time_evolution_of_position_and_momentum}, we obtain 
the commutator estimate
\[
\| [\nabla, e^{-itH} ] \|_{\Sigma \to L^2} = O(t).
\]
By straightforward manipulations, we obtain
\[
\langle (G_n^N)^{-1} e^{-it_n H} G_N^Mf, g \rangle_{\dot{H}^1} =
\langle f, (G_n^M)^{-1} e^{it_n H} G_n^N g \rangle_{\dot{H}^1} +
R_n(f, g)
\]
where $R_n(f, g) = \langle [\nabla, e^{-it_n H} ] G_n^M f, \nabla
G_n^N g \rangle_{L^2} - \langle \nabla G_n^M f, [ \nabla, e^{it_n H} ]
G_n^N g \rangle_{L^2}$. The claim follows from Cauchy-Schwartz and the
above commutator estimate.

\end{proof}

The next lemma is a converse to Lemma~\ref{lma:strong_convergence}.
\begin{lma}[Weak convergence]
\label{lma:weak_convergence}
Assume the frames $\mcal{F}^M = \{ (t_n^M, x_n, M_n)\}$ and $\mcal{F}^N
= \{ (t_n^N, y_n, N_n) \}$ are orthogonal. 
Then, for any $f \in \Sigma$,
\[
(e^{-it_n^N H} G_n^N)^{-1} e^{-it_n^M H} G_n^M f \to 0 \quad \text{weakly in} \quad \dot{H}^1.
\]

\end{lma}

\begin{proof}

  Put $t_n = t_n^M - t_n^N$, and suppose that $|M_n^2 t_n|
  \to \infty$. Then 
  \[
  \| (G_n^N)^{-1} e^{-it_n H} G_n^M
  f\|_{L^{\fr{2d}{d-2}}} \to 0
  \]
  for $f \in C^\infty_c$ by a change of
  variables and the dispersive estimate, thus for general $f \in
  \Sigma$ by a density argument. Therefore $(G_n^N)^{-1} e^{-it_n H}
  G_n^M f$ converges weakly in $\dot{H}^1$ to $0$. We consider next
  the case where $M_n^{2} t_n \to t_\infty \in \mf{R}$. The
  orthogonality of $\mcal{F}^M$ and $\mcal{F}^N$ implies that either
  $N_n^{-1} M_n$ converges to $0$ or $\infty$, or $M_n |x_n-y_n|$
  diverges as $n \to \infty$. In either case, one verifies easily that
  the operators $(G_n^N)^{-1} G_n^M$ converge to zero in the weak
  operator topology on $B(\dot{H}^1, \dot{H}^1)$. Applying
  Lemma~\ref{lma:strong_convergence}, we see that $(G_n^N)^{-1}
  e^{-it_n H} G_n^M f = (G_n^N)^{-1} G_n^M (G_n^M)^{-1} e^{-it_n H}
  G_n^M f$ converges to zero weakly in~$\dot{H}^1$.
\end{proof}

\begin{cor}
\label{cor:weak_conv_cor}
Let $\{(t_n^M, x_n, M_n, M_n')\}$ and $ \{ (t_n^N, y_n, N_n,
N_n')\}$ be augmented frames such that $\{(t_n^M, x_n, M_n)\}$ and
$\{(t_n^N, y_n, N_n)\}$ are orthogonal. Let $G_n^M, \ S_n^M$ and $G_n^N, S_n^N$ be the associated operators.
Then 
\[
(e^{-it_n^N H} G_n^N)^{-1} e^{-it_n^M H} G_n^M S_n^M \phi \rightharpoonup 0 \quad
\text{in} \quad \dot{H}^1
\]
whenever $\phi \in H^1$ if $\mcal{F}^M$ is of type
\ref{enum:aug_frame_2_1} and $\phi \in \dot{H}^1$ if $\mcal{F}^M$ is
of type \ref{enum:aug_frame_2_2}.
\end{cor}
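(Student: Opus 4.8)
The plan is to deduce this from the cutoff-free weak convergence statement, Lemma~\ref{lma:weak_convergence}, by a density argument that closely parallels the passage from Lemma~\ref{lma:strong_convergence} to Corollary~\ref{cor:strong_conv_cor}. First I would dispose of the degenerate case $M_n \equiv 1$, in which $\mcal{F}^M$ is a frame of type~\ref{enum:frame_1} and the assertion is vacuous, and reduce to $M_n \to \infty$. As in the proof of Lemma~\ref{lma:weak_convergence}, set $t_n = t_n^M - t_n^N$ and write the operator in question as $T_n := (e^{-it_n^N H} G_n^N)^{-1} e^{-it_n^M H} G_n^M S_n^M = (G_n^N)^{-1} e^{-it_n H} G_n^M S_n^M$; the goal becomes $T_n \phi \rightharpoonup 0$ in $\dot{H}^1$.

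The first ingredient is a uniform operator bound $\sup_n \| T_n \|_{\star \to \dot{H}^1} < \infty$, where $\star = H^1$ in case~\ref{enum:aug_frame_2_1} and $\star = \dot{H}^1$ in case~\ref{enum:aug_frame_2_2}. This follows by chaining three facts: $(G_n^N)^{-1}$ is an isometry of $\dot{H}^1$ (hence bounded $\Sigma \to \dot{H}^1$ with norm $\le 1$, since $\|\cdot\|_{\dot{H}^1} \le \|\cdot\|_{\Sigma}$); $e^{-it_n H}$ is unitary on $\Sigma$; and Lemma~\ref{lma:approximation_lemma}, applied with the admissible pair $(q, r) = (\infty, 2)$ to a time-independent function, bounds $\limsup_n \| G_n^M S_n^M \psi\|_{\Sigma}$ by $\|\psi\|_{\star}$ (this needs $d \ge 3$ so that $r = 2 < d$). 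The role of the spatial cutoff $S_n^M$ here is exactly to return the a priori merely $\dot{H}^1$- or $H^1$-valued function to $\Sigma$, so that $e^{-itH}$ may legitimately act on it.

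The second ingredient treats $\phi \in C^\infty_c(\mf{R}^d)$. By the construction of the auxiliary parameter in Definition~\ref{def:aug_frame} (in particular $N_n^M / (N_n^M)' \to \infty$), the cutoff $S_n^M$ is the identity on any fixed compactly supported $\phi$ once $n$ is large, so $T_n \phi = (G_n^N)^{-1} e^{-it_n H} G_n^M \phi$ eventually; since $\phi \in \Sigma$ and the underlying frames $\mcal{F}^M, \mcal{F}^N$ are orthogonal, Lemma~\ref{lma:weak_convergence} gives $T_n \phi \rightharpoonup 0$ in $\dot{H}^1$. I would then combine the two ingredients in the routine way: given $g \in \dot{H}^1$ and $\varepsilon > 0$, choose $\phi^\varepsilon \in C^\infty_c$ with $\|\phi - \phi^\varepsilon\|_{\star} < \varepsilon$, split $\langle T_n \phi, g\rangle_{\dot{H}^1}$ into $\langle T_n \phi^\varepsilon, g\rangle_{\dot{H}^1} + \langle T_n(\phi - \phi^\varepsilon), g\rangle_{\dot{H}^1}$, let $n \to \infty$ (the first term vanishing by the second ingredient, the second being $\lesssim \varepsilon \|g\|_{\dot{H}^1}$ uniformly in $n$ by the first), and finally let $\varepsilon \to 0$.

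I do not anticipate a genuine obstacle: this corollary is a soft consequence of the machinery already assembled, and is in fact easier than Corollary~\ref{cor:strong_conv_cor}, where one also had to identify the limiting operator $U_\infty$. The only point requiring a little care is the uniform $\Sigma$-bound on $G_n^M S_n^M \psi$ in the first ingredient --- equivalently, checking that Lemma~\ref{lma:approximation_lemma} is applicable with the endpoint $r = 2$ --- which is what makes the density argument close.
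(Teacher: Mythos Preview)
Your proposal is correct and follows exactly the paper's approach: the paper's proof reduces to Lemma~\ref{lma:weak_convergence} for $\phi \in C^\infty_c$ (using that $S_n^M$ is eventually the identity on such $\phi$) and then invokes ``an approximation argument similar to the one used in the proof of Corollary~\ref{cor:strong_conv_cor}'' for general $\phi$. You have simply spelled out that approximation argument in full, correctly identifying the uniform $\Sigma$-bound on $G_n^M S_n^M$ via Lemma~\ref{lma:approximation_lemma} with $(q,r)=(\infty,2)$ as the point that makes the density step close.
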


\begin{proof}
If $\phi \in C^\infty_c$, then $S^M_n \phi = \phi$ for all large $n$, and the claim
follows from Lemma~\ref{lma:weak_convergence}. The case of general $\phi$ in
$H^1$ or $\dot{H}^1$ then follows from an approximation argument
similar to the one used in the proof of
Corollary~\ref{cor:strong_conv_cor}. 
\end{proof}





\subsection{End of Proof of Inverse Strichartz}
\label{subsection:inv_strichartz_part_2}
We return to the proof of Proposition~\ref{prop:inv_strichartz}. Let
us pause briefly to assess our progress. We have thus far identified a
frame $\{(t_n, x_n, N_n, N_n')\}$ and an associated profile $\phi_n$ such
that the sequence $N_n^2 t_n$ has a limit in $[-\infty, \infty]$ as $n
\to \infty$. The $\phi_n$ were shown to satisfy properties
\eqref{eqn:inv_strichartz_nontriviality_in_Sigma} through
\eqref{eqn:inv_strichartz_decoupling_in_Sigma} if either
$(t_n, x_n, N_n) = (0, 0, 1)$ or $N_n \to \infty$ and $N_n^2 t_n \to
\pm \infty$. Thus, it remains to prove that if $N_n \to \infty$ and
$N_n^2 t_n$ remains bounded, then we may modify the frame so that
$t_n$ is identically zero and find a profile $\phi_n$ corresponding to
this new frame which satisfies all the properties asserted in the
proposition. The following lemma will therefore complete the proof of Proposition \ref{prop:inv_strichartz}.

\begin{lma}
\label{lma:inv_strichartz_part_2}
Let $f_n \in \Sigma$ satisfy the hypotheses of Proposition
\ref{prop:inv_strichartz}. Suppose $\{(t_n, x_n, N_n, N_n')\}$ is an
augmented frame with $N_n \to \infty$ and $N_n^2 t_n \to t_\infty \in
\mf{R}$ as $n \to \infty$.  Then there is a profile $\phi_n' = G_nS_n
\phi'$ associated to the frame $\{ (0, x_n, N_n, N_n') \}$ such that
properties
\eqref{eqn:inv_strichartz_nontriviality_in_Sigma}, 
\eqref{eqn:inv_strichartz_potential_energy_decoupling},
and \eqref{eqn:inv_strichartz_decoupling_in_Sigma} hold with
$\phi_n'$ in place of $\phi_n$.
\end{lma}

\begin{proof}
Let $\phi_n = e^{it_n H} G_n S_n \phi$ be the profile defined by
\eqref{eqn:inv_strichartz_case_2_phi_n_defn}. We have already seen
that $\phi_n$ satisfies properties
\eqref{eqn:inv_strichartz_nontriviality_in_Sigma} and
\eqref{eqn:inv_strichartz_decoupling_in_Sigma}, and that
\[
\phi = \opn{\dot{H}^1-w-lim}_{n \to \infty} G_n^{-1} e^{-it_n H} f_n.
\]
As the sequence $G_n^{-1} f_n$ is bounded in $\dot{H}^1$, it has a
weak subsequential limit
\[
\phi' = \opn{\dot{H}^1-w-lim}_{n \to \infty} G_n^{-1} f_n.
\]
For any $\psi \in C^\infty_c$, we apply Lemma
\ref{lma:approximate_adjoint} with $f = G_n^{-1} e^{-it_n H}f_n$ to see that
\[
\begin{split}
\langle \phi', \psi \rangle_{\dot{H}^1} &= \lim_{n \to \infty} \langle
G_n^{-1} f_n, \psi \rangle_{\dot{H}^1} = \lim_{n \to \infty} \langle G_n^{-1} e^{it_n H}
G_n G_n^{-1} e^{-it_n H} f_n, \psi \rangle_{\dot{H}^1} \\
&= \lim_{n \to \infty} \langle G_n^{-1} e^{-it_n H} f_n, G_n^{-1}
e^{-it_n H} G_n \psi \rangle_{\dot{H}^1} = \langle \phi, U_{\infty}
\psi \rangle_{\dot{H}^1},
\end{split}
\]
where $U_\infty = \opn{s-lim}_{n \to \infty} G_n^{-1} e^{-it_n H} G_n$
is the strong operator limit guaranteed by Lemma
\ref{lma:strong_convergence}. As $U_\infty$ is unitary on $\dot{H}^1$,
we have the relation $\phi = U_\infty \phi'$. 

Put $\phi'_n = G_n S_n \phi'$. By Corollary~\ref{cor:strong_conv_cor}, 
\[
\| \phi_n - \phi_n' \|_{\Sigma} = \| e^{it_n H} G_n S_n \phi - G_n S_n
U_\infty^{-1} \phi \|_{\Sigma} \to 0 \quad \text{as} \quad n \to \infty.
\]
Hence $\phi_n'$ inherits property
\eqref{eqn:inv_strichartz_nontriviality_in_Sigma} from $\phi_n$. The
same proof as for $\phi_n$ shows that $\Sigma$ decoupling
\eqref{eqn:inv_strichartz_decoupling_in_Sigma} holds as well. It
remains to verify the last decoupling property
\eqref{eqn:inv_strichartz_potential_energy_decoupling}. As
$G_n^{-1} f_n$ converges weakly in $\dot{H}^1$ to $\phi'$, by
Rellich-Kondrashov and a diagonalization argument we may assume after
passing to a subsequence that $G_n^{-1} f_n$ converges to $\phi'$
almost everywhere on $\mf{R}^d$. By the Lemma
\ref{lma:refined_fatou}, the observation that $\lim_{n \to \infty} \|
G_n S_n \phi' - G_n \phi' \|_{\fr{2d}{d-2}} = 0$, and a change of
variables, we have
\[
\begin{split}
&\lim_{n \to \infty} \left[ \|f_n\|_{\fr{2d}{d-2}}^{\fr{2d}{d-2}} - \|f_n
- \phi_n' \|_{\fr{2d}{d-2}}^{\fr{2d}{d-2}} - \| \phi_n'\|_{
  \fr{2d}{d-2}}^{\fr{2d}{d-2}} \right] \\
&=
\lim_{n \to \infty} \left[ \| G_n^{-1}
f_n\|_{\fr{2d}{d-2}}^{\fr{2d}{d-2}} - \| G_n^{-1} f_n - \phi'
\|_{\fr{2d}{d-2}}^{\fr{2d}{d-2}} - \| \phi'\|_{
  \fr{2d}{d-2}}^{\fr{2d}{d-2}} \right]\\
&= 0.
\end{split}
\]
\end{proof}

\begin{rmk}
  As $\lim_{n\to \infty} \| \phi_n - \phi_n'\|_{\Sigma} = 0$, we
  see by Sobolev embedding that the decoupling
  \eqref{eqn:inv_strichartz_potential_energy_decoupling} also holds
  for the original profile $\phi_n = e^{it_n H} G_n S_n \phi$ with
  nonzero time parameter $t_n$.
\end{rmk}

\subsection{Linear profile decomposition}

We are ready to write down the linear 
profile decomposition. As before,
$I$ will denote a fixed interval containing $0$ of length at most 
$1$, and all spacetime norms are taken over $I \times \mf{R}^d$ unless
indicated otherwise.

\begin{prop}
\label{prop:lpd}
  Let $f_n$ be a bounded sequence in $\Sigma$. After passing to a
  subsequence, there exists $J^* \in \{0, 1, \dots\} \cup \{\infty\}$
  such that for each finite $1 \le j \le J^*$, there exist an
  augmented frame $\mcal{F}^j = \{(t_n^j, x_n^j, N_n^j, (N_n^j)')\}$
  and a function $\phi^j$ with the following properties.
\begin{itemize}
\item Either $t_n^j \equiv 0$ or $(N_n^j)^2 (t_n^j) \to \pm \infty$ as
  $n \to \infty$.
\item $\phi^j$ belongs to $\Sigma, \ H^1$, or $\dot{H}^1$ depending on
  whether $\mcal{F}^j$ is of type \ref{enum:aug_frame_1},
  \ref{enum:aug_frame_2_1}, or \ref{enum:aug_frame_2_2},
  respectively. 
\end{itemize}
For each finite $J \le J^*$, we have a decomposition 
\begin{equation}
\label{eqn:lpd_decomposition}
f_n = \sum_{j=1}^J e^{it_n^j H} G_n^j S_n^j \phi^j + r_n^J,
\end{equation}
where $G_n^j, \ S_n^j$ are the $\dot{H}^1$-isometry and spatial cutoff
operators associated to $\mcal{F}^j$. Writing $\phi^j_n$ for $e^{it_n^j H}
G_n^j S_n^j \phi^j$, this decomposition has the following properties:
\begin{gather}
\label{eqn:lpd_bubble_maximality}
(G_n^J)^{-1} e^{-it_n^J H} r_n^J \overset{\dot{H}^1}{\rightharpoonup}
0 \quad \text{for all} \ J \le J^*,\\
\label{eqn:lpd_sigma_decoupling}
\sup_{J} \lim_{n \to \infty} \Bigl | \|  f_n \|_{\Sigma}^2 - \sum_{j=1}^J \|
   \phi^j_n\|_{\Sigma}^2 - \| r_n^J\|_{\Sigma}^2 \Bigr | = 0,\\
\label{eqn:lpd_potential_energy_decoupling}
\sup_{J} \lim_{n \to \infty} \Bigl | \|f_n\|_{L^{\fr{2d}{d-2}}_x}^{\fr{2d}{d-2}}
  - \sum_{j=1}^J \| \phi^j_n\|_{L^{\fr{2d}{d-2}}_x}^{\fr{2d}{d-2}} -
  \| r_n^J \|_{L^{\fr{2d}{d-2}}_x}^{\fr{2d}{d-2}} \Bigr | = 0.
\end{gather}
Whenever $j \ne k$, the frames $\{ (t_n^j, x_n^j, N_n^j)\}$ and $\{ (t_n^k, x_n^k,
N_n^k)\}$ are orthogonal:
\begin{equation}
\label{eqn:lpd_orthogonality_of_frames}
\lim_{n \to \infty} \tfr{N_n^j}{N_n^k} + \tfr{N_n^k}{N_n^j} + N_n^j N_n^k |t_n^j
- t_n^k| + \sqrt{N_n^j N_n^k} |x_n^j - x_n^k| = \infty.
\end{equation}
Finally, we have
\begin{equation}
\label{eqn:lpd_vanishing_of_remainder}
\lim_{J \to J^*} \limsup_{n \to \infty} \| e^{-it_n H} r_n^J
\|_{L^{\fr{2(d+2)}{d-2}}_{t,x}} = 0,
\end{equation}
\end{prop}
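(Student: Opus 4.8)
The plan is to construct the profiles one at a time by iterated application of the Inverse Strichartz inequality (Proposition~\ref{prop:inv_strichartz}), together with the time renormalization of Lemma~\ref{lma:inv_strichartz_part_2}, following the template of \cite{keraani_compactness_defect, claynotes, oberwolfach}. Set $r_n^0 = f_n$ and, given $r_n^{J-1}$, put $\varepsilon_{J-1} = \limsup_n \| e^{-itH} r_n^{J-1} \|_{L^{2(d+2)/(d-2)}_{t,x}}$. If $\varepsilon_{J-1} = 0$, stop and set $J^* = J-1$; otherwise pass to a subsequence along which $\| e^{-itH} r_n^{J-1}\| \to \varepsilon_{J-1}$ and apply Proposition~\ref{prop:inv_strichartz} with $\varepsilon = \varepsilon_{J-1}/2$ and $A = \sup_n \|f_n\|_\Sigma$ to produce an augmented frame $\mcal{F}^J$ and a profile $\phi^J$, invoking Lemma~\ref{lma:inv_strichartz_part_2} to reset $t_n^J$ to $0$ in the case $(N_n^J)^2 t_n^J \to t_\infty \in \mf{R}$. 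Define $\phi^J_n = e^{it_n^J H} G_n^J S_n^J \phi^J$ and $r_n^J = r_n^{J-1} - \phi^J_n$; telescoping gives the decomposition \eqref{eqn:lpd_decomposition}. If the process never terminates, set $J^* = \infty$ and extract a diagonal subsequence so that all conclusions below hold simultaneously for every finite $J$.

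Most of the stated properties follow by iterating the single-profile statements. Summing \eqref{eqn:inv_strichartz_decoupling_in_Sigma} (applied with $r_n^{j-1}$ in the role of $f_n$) over $j = 1, \dots, J$ yields $\lim_n \bigl( \|f_n\|_\Sigma^2 - \sum_{j \le J} \|\phi^j_n\|_\Sigma^2 - \|r_n^J\|_\Sigma^2 \bigr) = 0$, which is \eqref{eqn:lpd_sigma_decoupling}; since $\|r_n^J\|_\Sigma^2 \ge 0$, this also shows $\sum_{j \le J} \liminf_n \|\phi^j_n\|_\Sigma^2 \lesssim A^2$ for all $J$, so $\liminf_n \|\phi^j_n\|_\Sigma \to 0$ as $j \to \infty$. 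Summing \eqref{eqn:inv_strichartz_potential_energy_decoupling} likewise gives \eqref{eqn:lpd_potential_energy_decoupling}. Property \eqref{eqn:lpd_bubble_maximality} is built into the construction: by the definition of $\phi^J$ we have $(G_n^J)^{-1} e^{-it_n^J H} r_n^{J-1} \rightharpoonup \phi^J$ in $\dot{H}^1$ (after the adjustment of Lemma~\ref{lma:inv_strichartz_part_2} when $t_n^J$ is reset to zero), hence $(G_n^J)^{-1} e^{-it_n^J H} r_n^J = (G_n^J)^{-1} e^{-it_n^J H} r_n^{J-1} - S_n^J \phi^J \rightharpoonup 0$ using $S_n^J \to I$ strongly, \eqref{eqn:inv_strichartz_frameops_properties}. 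Finally, \eqref{eqn:lpd_vanishing_of_remainder} is trivial when $J^* < \infty$; when $J^* = \infty$, the nontriviality bound \eqref{eqn:inv_strichartz_nontriviality_in_Sigma} reads $\liminf_n \|\phi^J_n\|_\Sigma \gtrsim A_{J-1} (\varepsilon_{J-1}/A_{J-1})^{d(d+2)/8}$ with $A_{J-1} = \limsup_n \|r_n^{J-1}\|_\Sigma$ uniformly bounded (by the $\Sigma$-decoupling just proved) and $\varepsilon_{J-1} \lesssim A_{J-1}$ by Strichartz, so $\liminf_n \|\phi^J_n\|_\Sigma \to 0$ forces $\varepsilon_{J-1} \to 0$ as $J \to \infty$.

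The substantive step, and the one I expect to be the main obstacle, is the asymptotic orthogonality \eqref{eqn:lpd_orthogonality_of_frames} of the frames. I would prove this by induction on $k$: assuming $\mcal{F}^1, \dots, \mcal{F}^{k-1}$ are pairwise orthogonal in the sense of Definition~\ref{def:equiv_frames}, suppose for contradiction that $\mcal{F}^k$ is equivalent to $\mcal{F}^j$ for some $j < k$. Writing $r_n^{k-1} = r_n^j - \sum_{i=j+1}^{k-1} \phi^i_n$ and applying $(G_n^k)^{-1} e^{-it_n^k H}$, one checks that every term tends weakly to $0$ in $\dot{H}^1$. For the term coming from $r_n^j$, one writes $r_n^j = e^{it_n^j H} G_n^j h_n$ with $h_n = (G_n^j)^{-1} e^{-it_n^j H} r_n^j \rightharpoonup 0$ and $\|G_n^j h_n\|_\Sigma = \|r_n^j\|_\Sigma$ bounded, then tests against $\psi \in C^\infty_c$ using Lemma~\ref{lma:approximate_adjoint} (whose error is $O(|t_n^j - t_n^k|) \to 0$ for equivalent frames) together with the strong convergence $(G_n^j)^{-1} e^{i(t_n^k - t_n^j)H} G_n^k \psi \to U_\infty^{-1}\psi$ from Lemma~\ref{lma:strong_convergence}. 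For each term $(G_n^k)^{-1} e^{-it_n^k H} \phi^i_n$ with $j < i < k$, one notes that equivalence is symmetric and transitive, so $\mcal{F}^i \perp \mcal{F}^j$ and $\mcal{F}^j \sim \mcal{F}^k$ force $\mcal{F}^i \perp \mcal{F}^k$, whence Corollary~\ref{cor:weak_conv_cor} gives the weak convergence to $0$. Thus $(G_n^k)^{-1} e^{-it_n^k H} r_n^{k-1} \rightharpoonup 0$, so the weak limit defining $\phi^k$ (or $U_\infty^{-1}\phi^k$ in the renormalized case) vanishes, contradicting \eqref{eqn:inv_strichartz_nontriviality_in_Sigma}. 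The delicate part throughout is the bookkeeping — keeping track of the spatial cutoffs $S_n^j$, the unitaries $U_\infty$ from the time renormalization, and the repeated passages to subsequences — but each analytic ingredient is provided by the convergence lemmas of Section~\ref{subsection:convergence}.
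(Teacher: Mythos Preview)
Your proposal is correct and follows essentially the same approach as the paper: iterate Proposition~\ref{prop:inv_strichartz} (with the time renormalization of Lemma~\ref{lma:inv_strichartz_part_2}), telescope to get the decoupling statements, and derive orthogonality by contradiction using the convergence lemmas of Section~\ref{subsection:convergence} together with the approximate adjoint Lemma~\ref{lma:approximate_adjoint}. The only differences are cosmetic: in the orthogonality step you induct on $k$ and apply the frame-$k$ operators $(G_n^k)^{-1} e^{-it_n^k H}$ to $r_n^{k-1} = r_n^j - \sum_{j < i < k} \phi_n^i$, whereas the paper selects the first equivalent pair $(j,k)$ and applies the frame-$j$ operators to $r_n^{j-1}$, arriving at $\phi^j = \phi^j + U_\infty^{jk}\phi^k$; both routes use the same ingredients (Corollaries~\ref{cor:strong_conv_cor}, \ref{cor:weak_conv_cor}, Lemma~\ref{lma:approximate_adjoint}, and \eqref{eqn:lpd_bubble_maximality}) and reach the same contradiction $\phi^k = 0$.
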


\begin{rmk}
One can also show a posteriori using
\eqref{eqn:lpd_orthogonality_of_frames} and \eqref{eqn:lpd_vanishing_of_remainder} the fact, which we will
neither prove nor use, that
\[
\sup_J \lim_{n \to \infty} \Bigl| 
\|e^{-itH} f_n\|_{L^{\fr{2(d+2)}{d-2}}_{t,x}}^{
\fr{2(d+2)}{d-2}} - \sum_{j=1}^J 
\| e^{-itH} \phi^j_n\|_{L^{\fr{2(d+2)}{d-2}}_{t,x}}^{ 
\fr{2(d+2)}{d-2}} - \| e^{-itH} w_n^J\|_{L^{\fr{2(d+2)}{d-2}}_{t,x}}^{
\fr{2(d+2)}{d-2}} \Bigr | = 0.
\]
The argument uses similar ideas as in the proofs of
\cite{keraani_compactness_defect}[Lemma 2.7] or Lemma~\ref{lma:decoupling_of_nonlinear_profiles}; we omit the
details.
\end{rmk}

\begin{proof}
We proceed inductively using Proposition
\ref{prop:inv_strichartz}. Let $r_n^0 = f_n$. Assume that we have a
decomposition up to level $J \ge 0$ obeying properties
\eqref{eqn:lpd_bubble_maximality} through
\eqref{eqn:lpd_potential_energy_decoupling}. After passing to a
subsequence, we may define
\[
A_J = \lim_n \| r_n^J \|_{\Sigma} \quad \text{and} \quad 
\varepsilon_J = \lim_n \| e^{-it_n H} r_n^J
\|_{L^{\fr{2(d+2)}{d-2}}_{t,x}}.
\]
If $\varepsilon_J = 0$, stop and set $J^* = J$. Otherwise we apply
Proposition \ref{prop:inv_strichartz} to the sequence $r_n^J$ to
obtain a frame $(t_n^{J+1}, x_n^{J+1}, N_n^{J+1}, (N_n^{J+1})')$ and
functions 
\[
\phi^{J+1} \in \dot{H}^1, \quad \phi_n^{J+1} = e^{it_n^{J+1} H}
G_n^{J+1} S_n^{J+1} \phi^{J+1} \in \Sigma
\]
which satisfy the
conclusions of Proposition \ref{prop:inv_strichartz}. In particular
$\phi^{J+1}$ is the $\dot{H}^1$ weak limit of $(G_n^{J+1})^{-1}
e^{-it_n^{J+1}H} r_n^J$. Let $r_n^{J+1} = r_n^J - \phi_n^{J+1}$. By the induction hypothesis,
\eqref{eqn:lpd_sigma_decoupling} and
\eqref{eqn:lpd_potential_energy_decoupling} are satisfied with $J$ 
replaced by $J+1$. We also have 
\[
(G_n^{J+1})^{-1} e^{-it_n^{J+1} H} r_n^{J+1} = [ (G_n^{J+1})^{-1}
e^{-it_n^{J+1}H} r_n^J- \phi^{J+1}] + (1 - S_n^{J+1}) \phi^{J+1}.
\]
As $n \to \infty$, the first term goes to zero weakly in $\dot{H}^1$
while the second term goes to zero strongly. Thus
\eqref{eqn:lpd_bubble_maximality} holds at level $J+1$ as well. After
passing to a subsequence, we may define
\[
A_{J+1} = \lim_n \| r_n^{J+1} \|_{\Sigma} \quad \text{and} \quad
\varepsilon_{J+1} = \lim_n \| e^{-itH} r_n^{J+1} \|_{L^{\fr{2(d+2)}{d-2}}_{t,x}}.
\]
If $\varepsilon_{J+1} = 0$, stop and set $J^* = J+1$. Otherwise
continue the induction. If the algorithm never terminates, set $J^* =
\infty$. From \eqref{eqn:lpd_sigma_decoupling} and
\eqref{eqn:lpd_potential_energy_decoupling}, the parameters $A_J$ and
$\varepsilon_J$ satisfy the inequality
\[
A_{J+1}^2 \le A_J^2 [ 1 - C(\tfr{\varepsilon_J}{A_J}
)^{\fr{d(d+2)}{4}} ].
\]
If $\limsup_{J \to J^*} \varepsilon_J = \varepsilon_\infty > 0$, then
as $A_J$ are decreasing there would exist infinitely many $J$'s so
that
\[
A_{J+1}^2 \le A_J^2[1 - C(\tfr{\varepsilon_\infty}{A_0})^{\fr{d(d+2)}{4}}],
\]
which implies that $\lim_{J \to J^*} A_J = 0$. But this contradicts
the Strichartz inequality which dictates that $\limsup_{J \to J^*} A_J
\gtrsim \limsup_{J \to J^*} \varepsilon_J = \varepsilon_0$. We
conclude that
\[
\lim_{J \to J^*} \varepsilon_J = 0.
\]
Thus \eqref{eqn:lpd_vanishing_of_remainder} holds.

It remains to prove the assertion
\eqref{eqn:lpd_orthogonality_of_frames}. Suppose
otherwise, and let $j < k$ be the first two indices for which
$\mcal{F}^j$ and $\mcal{F}^k$ are equivalent. Thus $\mcal{F}^\ell$ and
$\mcal{F}^k$ are orthogonal for all $j < \ell < k$. By the
construction of the profiles, we have
\[
r_n^{j-1} = e^{it_n^j H} G_n^j S_n^j \phi^j + e^{it_n^k H}
G_n^k S_n^k \phi^k + \sum_{j < \ell < k}
e^{it_n^{\ell} H} G_n^{\ell} S_n^{\ell} \phi^{\ell}  + r_n^k,
\]
thus
\[
\begin{split}
  &(e^{it_n^j H} G_n^j)^{-1} r_n^{j-1} = (e^{it_n^jH} G_n^j )^{-1}
  e^{it_n^j H} G_n^j S_n^j \phi^j + (e^{it_n^j H} G_n^j)^{-1}
  e^{it_n^k H} G_n^k S_n^k \phi^k\\
  &+ \sum_{j < \ell < k} (e^{it_n^j H} G_n^j)^{-1} e^{it_n^{\ell} H}
  G_n^{\ell} S_n^{\ell} \phi^\ell + (e^{it_n^j H} G_n^j)^{-1} r_n^k.
\end{split}
\]
As $n \to \infty$, the left side converges to $\phi^j$ weakly in
$\dot{H}^1$. On the right side, we apply
Corollary~\ref{cor:strong_conv_cor} to see that the first and second terms
converge in $\dot{H}^1$ to $\phi^j$ and $U_{\infty}^{jk} \phi^k$,
respectively, for some isomorphism $U_{\infty}^{jk}$ of
$\dot{H}^1$. By Corollary~\ref{cor:weak_conv_cor}, each of the terms
in the summation converges to zero weakly in $\dot{H}^1$. Taking for
granted the claim that
\begin{equation}
\label{eqn:lpd_frame_ortho_final__weak_limit}
(e^{it_n^j H} G_n^j)^{-1} r_n^k \to 0 \quad \text{weakly in } \dot{H}^1,
\end{equation}
it follows that
\[
\phi^j = \phi^j + U_\infty^{jk}\phi^k,
\]
so $\phi^k = 0$, which contradicts the nontriviality of $\phi^k$. Therefore, the proof of the
proposition will be complete once we verify the weak limit
\eqref{eqn:lpd_frame_ortho_final__weak_limit}. As that sequence is
bounded in $\dot{H}^1$, it suffices to check that 
\[
\langle (e^{it_n^j H} G_n^j)^{-1} r_n^k, \psi \rangle_{\dot{H}^1} \to
0 \quad \text{for any } \psi \in C^\infty_c(\mf{R}^d).
\]
Writing $(e^{it_n^j H} G_n^j)^{-1}
r_n^k = (e^{it_n^j H} G_n^j)^{-1} (e^{it_n^k H} G_n^k) (e^{it_n^kH}
G_n^k)^{-1} r_n^k$, we apply Lemma~\ref{lma:approximate_adjoint} and the
weak limit \eqref{eqn:lpd_bubble_maximality} to see that
\[
\begin{split}
  \lim_{n\to \infty} \, \langle (e^{it_n^j H} G_n^j)^{-1} r_n^k, \psi
  \rangle_{\dot{H}^1} &= \lim_{n\to \infty} \, \langle (e^{it_n^kH} G_n^k)^{-1}
  r_n^k, (e^{it_n^kH} G_n^k)^{-1} (e^{it_n^jH} G_n^j)
  \psi\rangle_{\dot{H}^1}
  \\
  &= \lim_{n\to \infty} \, \langle (G_n^k)^{-1} e^{-it_n^kH} r_n^k, (U_{\infty}^{jk})^{-1} \psi
  \rangle_{\dot{H}^1} \\
&= 0.
\end{split}
\]
\end{proof}

\section{The case of concentrated initial data}
\label{section:localized_initial_data}

The next step in the proof of Theorem~\ref{thm:main_theorem} is to
establish wellposedness when the initial data consists of a highly
concentrated ``bubble''. The picture to keep in mind is that of a
single profile $\phi^j_n$ in Proposition \ref{prop:lpd} as $n\to
\infty$. In the next section we combine this special case with the
profile decomposition to treat general initial data. Although we state the following result as a conditional one to
permit a unified exposition, by Theorem~\ref{thm:ckstt} the result is
unconditionally true in most cases.

\begin{prop}
\label{prop:localized_initial_data}
Let $I = [-1, 1]$.  Assume that Conjecture \ref{conjecture:ckstt}
holds. Suppose
  \[
  \mcal{F} = \{ (t_n, x_n, N_n, N_n')\}
  \]
  is an augmented frame with $t \in I$ and
  $N_n \to \infty$, such that either $t_n \equiv 0$ or $N_n^2 t_n \to
  \pm \infty$; that is, $\mcal{F}$ is type \ref{enum:aug_frame_2_1} or
  \ref{enum:aug_frame_2_2} in Definition \ref{def:aug_frame}. Let
  $G_n, \tilde{G}_n$, and $S_n$ be the associated operators as defined
  in \eqref{eqn:inv_strichartz_frameops_1} and
  \eqref{eqn:inv_strichartz_frameops_2}. Suppose $\phi$ belongs to $H^1$
  or $\dot{H}^1$ depending on whether $\mcal{F}$ is type
  \ref{enum:aug_frame_2_1} or \ref{enum:aug_frame_2_2}
  respectively. Then, for $n$ sufficiently large, there is a unique
  solution $u_n : I \times \mf{R}^d \to \mf{C}$ to the defocusing
  equation \eqref{eqn:nls_with_quadratic_potential}, $\mu = 1$, with
  initial data
\[
u_n(0)=  e^{it_n H} G_n S_n \phi.
\]
This solution satisfies a spacetime bound
\[
\limsup_{n \to \infty} S_I(u_n) \le C( E(u_n)).
\]
Suppose in addition that $\{ (q_k, r_k) \}$ is any finite collection
of admissible pairs with $2 < r_k < d$. Then for each $\varepsilon >
0$ there exists $\psi^{\varepsilon} \in C^\infty_c(\mf{R} \times
\mf{R}^d)$ such that
\begin{equation}
\label{eqn:approximation_by_smooth_functions}
\limsup_{n \to \infty} \sum_k \| u_n - \tilde{G}_n [e^{-
  \fr{itN_n^{-2} |x_n|^2}{2}}  \psi^{\varepsilon}] \|_{L^{q_k}_t
  \Sigma^{r_k}_x (I \times \mf{R}^d)} < \varepsilon.
\end{equation}

Assuming also that $\| \nabla \phi\|_{L^2} < \| \nabla W\|_{L^2}$ and
$E_{\Delta}(\phi) < E_{\Delta}(W)$, we have the same conclusion as above for the
focusing equation \eqref{eqn:nls_with_quadratic_potential}, $\mu =
-1$.
\end{prop}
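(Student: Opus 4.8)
The plan is to produce, for each $\varepsilon>0$ and all sufficiently large $n$, an explicit approximate solution of \eqref{eqn:nls_with_quadratic_potential} obtained by rescaling and phase-modulating a solution of the scale-invariant equation \eqref{eqn:ckstt_eqn}, and then to invoke the stability theory (Proposition~\ref{prop:stability}) to conclude that $u_n$ exists on $I$, obeys the spacetime bound $\limsup_n S_I(u_n)\le C(E(u_n))$, and satisfies \eqref{eqn:approximation_by_smooth_functions}. The energy and the spacetime norm of the approximant will be bounded uniformly in $n$ and $\varepsilon$ in terms of $E_\Delta(\phi)$, which is comparable to $E(u_n)$ for large $n$ by the decoupling properties in the profile decomposition; hence the constants furnished by Proposition~\ref{prop:stability} are uniform. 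In the focusing case the hypotheses $\|\nabla\phi\|_{L^2}<\|\nabla W\|_{L^2}$ and $E_\Delta(\phi)<E_\Delta(W)$ are exactly what places the relevant scale-invariant solutions below the ground-state threshold, so that Conjecture~\ref{conjecture:ckstt} applies to them.

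Consider first the case $t_n\equiv 0$. Let $\mathcal{W}$ be the solution of \eqref{eqn:ckstt_eqn} with $\mathcal{W}(0)=\phi$; by Conjecture~\ref{conjecture:ckstt} it is global with $S_{\mf{R}}(\mathcal{W})\le C(E_\Delta(\phi))<\infty$. Given $\varepsilon>0$, choose $\psi^\varepsilon\in C^\infty_c(\mf{R}\times\mf{R}^d)$ approximating $\mathcal{W}$ in every relevant Strichartz norm $L^q_t\dot{H}^{1,r}_x(\mf{R}\times\mf{R}^d)$ and with $\psi^\varepsilon(0)$ close to $\phi$ in $\dot{H}^1$ (in $H^1$ if the frame is of type \ref{enum:aug_frame_2_1}); this is possible since $\mathcal{W}$ scatters, so its tails for $|t|\ge T$ are negligible, and each time slice can be mollified and spatially truncated. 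Set $\tilde u_n^\varepsilon=\tilde G_n\big[e^{-itN_n^{-2}|x_n|^2/2}\psi^\varepsilon\big]$, which for large $n$ equals $\tilde G_n S_n[\,\cdot\,]$ (the cutoff is inactive on the fixed compact support of $\psi^\varepsilon$) and lies in $C^\infty_c(I\times\mf{R}^d)$. Writing $s=N_n^2 t$ and $y=N_n(x-x_n)$, the error $e_n:=(i\partial_t-H)\tilde u_n^\varepsilon-\mu|\tilde u_n^\varepsilon|^{4/(d-2)}\tilde u_n^\varepsilon$ splits into $N_n^{(d-2)/2+2}e^{-isN_n^{-2}|x_n|^2/2}$ times $\big(i\partial_s+\frac12\Delta_y\big)\psi^\varepsilon-\mu|\psi^\varepsilon|^{4/(d-2)}\psi^\varepsilon$, which is small because $\psi^\varepsilon\approx\mathcal{W}$ and $\mathcal{W}$ solves \eqref{eqn:ckstt_eqn}; plus $N_n^{(d-2)/2}e^{-isN_n^{-2}|x_n|^2/2}$ times $\big(-N_n^{-1}x_n\cdot y-\frac12 N_n^{-2}|y|^2\big)\psi^\varepsilon$, coming from the discrepancy $\frac12|x|^2-\frac12|x_n|^2$. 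Because $\psi^\varepsilon$ has fixed compact support and $|x_n|\lesssim N_n$, the first piece contributes $\lesssim\varepsilon$ to $\|H^{1/2}e_n\|_{N(I)}$ after accounting for the (scale-invariant) scaling of the dual Strichartz norm, while the second contributes $O(N_n^{-2})\to 0$; here one uses $H^{1/2}\sim|\nabla|+|x|$ together with $|x|\lesssim N_n$ on the support. Finally, for $t_n\equiv 0$ one has $\tilde u_n^\varepsilon(0)-u_n(0)=G_n\big(\psi^\varepsilon(0)-S_n\phi\big)$, and by Lemma~\ref{lma:approximation_lemma} (with $(q,r)=(\infty,2)$) this is $\lesssim\|\psi^\varepsilon(0)-S_n\phi\|_{\dot{H}^1}\to\|\psi^\varepsilon(0)-\phi\|_{\dot{H}^1}\lesssim\varepsilon$. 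Proposition~\ref{prop:stability}, applied on subintervals of $I$ of unit length, then produces $u_n$ on $I$ with $\|u_n-\tilde u_n^\varepsilon\|_{L^{2(d+2)/(d-2)}_{t,x}}+\|H^{1/2}(u_n-\tilde u_n^\varepsilon)\|_{S(I)}\lesssim C(E_\Delta(\phi))\varepsilon^c$; since $S(I)$ controls every admissible $L^{q_k}_t\Sigma^{r_k}_x$ and $\|\tilde u_n^\varepsilon\|_{L^{2(d+2)/(d-2)}_{t,x}}\to\|\psi^\varepsilon\|_{L^{2(d+2)/(d-2)}_{s,y}}\le C(E_\Delta(\phi))$, both the spacetime bound and \eqref{eqn:approximation_by_smooth_functions} follow upon taking $\varepsilon$ small.

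When instead $N_n^2 t_n\to\pm\infty$, the data $u_n(0)=e^{it_n H}G_n S_n\phi$ is, after undoing $G_n$ and the modulation, approximately a free evolution of $\phi$ over a rescaled time tending to $\mp\infty$; this is exactly the content of \eqref{eqn:lens_transformation} combined with the operator-limit lemmas of Section~\ref{subsection:convergence} (Lemma~\ref{lma:strong_convergence}, Corollary~\ref{cor:strong_conv_cor}), used with one frame having vanishing time parameter. I would therefore take $\mathcal{W}$ to be the solution of \eqref{eqn:ckstt_eqn} that agrees asymptotically with this free evolution as the rescaled time runs to $\mp\infty$ — the corresponding wave operator — which by the scattering part of Conjecture~\ref{conjecture:ckstt} is global with $S_{\mf{R}}(\mathcal{W})\le C(E_\Delta(\phi))$, and in the focusing case remains below the ground-state threshold thanks to the hypotheses on $\phi$. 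Forming $\tilde u_n^\varepsilon=\tilde G_n\big[e^{-itN_n^{-2}|x_n|^2/2}\psi^\varepsilon\big]$ with $\psi^\varepsilon$ a compactly supported approximant to a time-translate $\mathcal{W}(\cdot+\sigma_n)$, matching the data at $t=0$ via Corollary~\ref{cor:strong_conv_cor}, and then running the stability argument as above completes this case.

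The main obstacle is precisely this last case: identifying the correct asymptotic state and confirming that the wave-operator solution it produces is admissible — global, and in the focusing regime sub-threshold with energy-controlled spacetime norm — all while reconciling the non-translation-invariant propagator $e^{-itH}$ with the scaling operators $\tilde G_n$ over the long rescaled interval $N_n^2 I$. This is where the conditionality on Conjecture~\ref{conjecture:ckstt} in dimensions $3$ and $4$ enters, and it rests entirely on the convergence lemmas of Section~\ref{subsection:convergence}. By contrast, once an approximant is in hand, the passage through Proposition~\ref{prop:stability}, the extraction of \eqref{eqn:approximation_by_smooth_functions}, and the verification that $E(u_n)$ stays comparable to $E_\Delta(\phi)$ are all routine.
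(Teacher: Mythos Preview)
Your overall strategy---rescale a global solution of \eqref{eqn:ckstt_eqn}, modulate by the phase $e^{-itN_n^{-2}|x_n|^2/2}$, and feed the result into Proposition~\ref{prop:stability}---is the right one, but the specific approximant you propose does not have small equation error. You take $\tilde u_n^\varepsilon=\tilde G_n\bigl[e^{-itN_n^{-2}|x_n|^2/2}\psi^\varepsilon\bigr]$ with $\psi^\varepsilon\in C^\infty_c(\mf{R}\times\mf{R}^d)$ and assert that $(i\partial_s+\tfr{1}{2}\Delta_y)\psi^\varepsilon-F(\psi^\varepsilon)$ is small ``because $\psi^\varepsilon\approx\mcal W$.'' This is false for any $\psi^\varepsilon$ with compact time support: when $\psi^\varepsilon$ switches off near the boundary of its support the term $i\partial_s\psi^\varepsilon$ picks up a contribution of the form $T^{-1}\chi'(s/T)\mcal W$, whose gradient has $L^1_sL^2_y$ norm comparable to $\sup_{|s|\sim T}\|\nabla\mcal W(s)\|_{L^2}\sim E_\Delta(\phi)^{1/2}$, and this does not vanish as $T\to\infty$ or as $\psi^\varepsilon\to\mcal W$ in Strichartz norms. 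No dual Strichartz exponent helps, so $\|H^{1/2}e_n\|_{N(I)}$ stays bounded away from zero and Proposition~\ref{prop:stability} does not apply. In the case $N_n^2 t_n\to\pm\infty$ the situation is worse: since $\psi^\varepsilon$ has fixed compact time support and $\tilde G_n$ recenters time at $t_n$, one has $\tilde u_n^\varepsilon(0)=0$ for large $n$, whence $\|\tilde u_n^\varepsilon(0)-u_n(0)\|_\Sigma=\|u_n(0)\|_\Sigma$ is bounded away from zero.

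The paper resolves both issues with a two-region construction \eqref{eqn:localized_initial_data_v_n-tilde_defn}: on the inner window $|t|\le TN_n^{-2}$ it uses the rescaled, spatially and frequency-truncated nonlinear solution $e^{-it|x_n|^2/2}\tilde G_n[S_nP_{\le\tilde N_n'}v]$; on the outer region it \emph{continues by the linear $H$-evolution} of the boundary data. The outer piece solves the linear equation exactly, so its only error is $-F(\tilde v_n^T)$, which vanishes as $T\to\infty$ by scattering of $v$ and the dispersive estimate for $e^{-itH}$ (Lemma~\ref{lma:long_time_approximation}); and when $N_n^2t_n\to\pm\infty$ the linear tail is exactly what matches $u_n(0)$ (Lemma~\ref{lma:matching_initial_data}). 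The statement \eqref{eqn:approximation_by_smooth_functions} with a genuine $\psi^\varepsilon\in C^\infty_c$ is then established \emph{a posteriori} (Lemma~\ref{lma:approximation_by_smooth_functions}) by showing that the $L^{q_k}_t\Sigma^{r_k}_x$ norms of the approximate solution are concentrated on the inner window, and transferring to $u_n$ via the stability bound; it is not used as the approximate solution itself.
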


The proof proceeds in several steps. First we construct an approximate
solution on $I$ in the sense of Proposition \ref{prop:stability}.
Roughly speaking, when $N_n$ is large and $t = O(N_n^{-2})$, solutions
to \eqref{eqn:nls_with_quadratic_potential} are well-approximated up
to a phase factor by solutions to the energy-critical NLS with no
potential, which by Conjecture~\ref{conjecture:ckstt} exist globally
and scatter.
In the long-time regime $|t| >> N_n^{-2}$, the solution
to \eqref{eqn:nls_with_quadratic_potential} has dispersed and
resembles a linear evolution $e^{-itH} \phi$ (note that we are
\emph{not} claiming scattering since we consider only a fixed finite
time window).
By patching these approximations together, we obtain an approximate
solution over the entire time interval $I$ with arbitrarily small
error as $N_n$ becomes large. We then invoke Proposition
\ref{prop:stability} to conclude that for $n$ large enough
\eqref{eqn:nls_with_quadratic_potential} admits a solution on $I$ with
controlled spacetime bound. The last claim about approximating the
solution by functions in $C^\infty_c(\mf{R} \times \mf{R}^d)$ will
follow essentially from our construction of the approximate solutions.

We first record a basic commutator estimate. 
Throughout the rest of this section, 
  $P_{\le N}, P_{N}$ will
denote the standard Littlewood-Paley projectors based on $-\Delta$. 
\begin{lma}[Commutator estimate]
\label{lma:CKSTT_nonlinearity_estimates}
Let $v$ be a global solution to 
\[
(i\partial_t + \tfr{1}{2}\Delta )v =
F(v), \ v(0) \in \dot{H}^1(\mf{R}^d)
\]
where $F(z) = \pm |z|^{\fr{4}{d-2}}
z$. Then on any compact time interval $I$,
\[
\lim_{N \to \infty} \| P_{\le N} F(v) - F(P_{\le N} v)\|_{L^2_t H^{1, \fr{2d}{d+2}}_x (I \times \mf{R}^d)} = 0
\]
\end{lma}

\begin{proof}
  We recall \cite[Lemma 3.11]{matador} that as a consequence of the
  spacetime bound \eqref{eqn:ckstt_spacetime_bound}, $\nabla v$ is
  finite in all Strichartz norms:
\begin{equation}
\label{eqn:ckstt_all_strichartz_bounds}
\| \nabla v\|_{S(\mf{R})} < C( \|v(0) \|_{\dot{H}^1} ) < \infty.
\end{equation}
Clearly it will suffice to show separately that 
\begin{align}
\lim_{n \to \infty} \| P_{\le N} F(v) - F(P_{\le N} v)\|_{L^2_t
  L^{\fr{2d}{d+2}}_x} &= 0, \label{eqn:commutator_noderiv}\\
 \lim_{n \to \infty} \| \nabla[ P_{\le N}
F(v) - F(P_{\le N} v) ] \|_{L^2_t L^{\fr{2d}{d+2}}_x} &= 0. \label{eqn:commutator_deriv}
\end{align}
Write
\begin{equation}
\begin{split}
\label{eqn:CKSTT_lma_eqn_1}
\| \nabla [ P_{\le N} F(v) - F(P_{\le N} v)]\|_{L^2_t L^{\fr{2d}{d+2}}_x} &\le \| \nabla P_{> N} F(v)\|_{L^2_t
  L^{\fr{2d}{d+2}}_x }  \\
&+ \| \nabla [F(v) -
F(P_{\le N} v)] \|_{L^2_t L^{\fr{2d}{d+2}}_x}.
\end{split}
\end{equation}
As $P_{>N} = 1 - P_{\le N}$ and 
\begin{align*}
\| \nabla F(v)\|_{L^2_t L^{\fr{2d}{d+2}}_x} \lesssim
\|v\|^{\fr{4}{d-2}}_{L^{\fr{2(d+2)}{d-2}}_{t, x} } \| \nabla
v\|_{L^{\fr{2(d+2)}{d-2}}_t L^{\fr{2d(d+2)}{d^2+4}}_x } \le
C(\| v(0)\|_{\dot{H}^1}),
\end{align*}
dominated convergence implies that 
\[
\lim_{N \to \infty} \| \nabla P_{> N} F(v)\|_{L^2_t
  L^{\fr{2d}{d+2}}_x} = 0. 
\]
To treat the second term on the right side of
\eqref{eqn:CKSTT_lma_eqn_1}, observe first that with $F(z) =
|z|^{\fr{4}{d-2}}z$,
\begin{align*}
|F_{z}(z) - F_{z}(w)| + |F_{\overline{z}} (z) - F_{\overline{z}}(w)|
\lesssim \left\{ \begin{array}{cc} |z-w|(|z|^{\fr{6-d}{d-2}} +
    |w|^{\fr{6-d}{d-2}}), & 3 \le d \le 5\\
|z-w|^{\fr{4}{d-2}}, & d \ge 6.
\end{array}\right.
\end{align*}
Combining this with the pointwise bound
\[
\begin{split}
|\nabla [F(v) - F(P_{\le N} v)]| &\le ( |F_{z}(v) - F_z(P_{\le N} v)
| + |F_{\overline{z}} (v) - F_{\overline{z}}(P_{\le N} v)| ) | \nabla
v|\\
&+ ( |F_{z}(P_{\le N} v)| + |F_{\overline{z}} (P_{\le N} v)|) |\nabla
P_{>N} v|,
\end{split}
\]
H\"{o}lder, and dominated convergence, when $d \ge 6$ we have
\begin{equation}
\begin{split}
\label{eqn:CKSTT_lma_eqn_2}
&\| \nabla [ F(v) - F(P_{\le N} v)] \|_{L^2_t
  L^{\fr{2d}{d+2}}_x} \\
&\lesssim \| |P_{>N} v|^{\fr{4}{d-2}}
|\nabla v| \|_{L^2_t L^{\fr{2d}{d+2}}_x} + \| |P_{\le N}
v|^{\fr{4}{d-2}} |\nabla P_{>N} v| \|_{L^2_t
  L^{\fr{2d}{d+2}}_x} \\
&\lesssim \| P_{> N} v \|_{L^{\fr{2(d+2)}{d-2}}_{t,
    x}}^{\fr{4}{d-2}} \| \nabla v\|_{L^{\fr{2(d+2)}{d-2}}_t
  L^{\fr{2d(d+2)}{d^2+4}}_x} + \|
v\|_{L^{\fr{2(d+2)}{d-2}}_{t,x}}^{\fr{4}{d-2}} \| P_{> N}
\nabla v\|_{L^{\fr{2(d+2)}{d-2}}_t L^{\fr{2d(d+2)}{d^2+4}}_x} \\
&\to 0 \text{ as } N \to \infty. 
\end{split}
\end{equation}
If $3 \le d \le 5$, the first term in the second line of \eqref{eqn:CKSTT_lma_eqn_2} is replaced by
\begin{align*}
&\| |P_{>N} v| (|v|^{\fr{6-d}{d-2}} + |P_{\le N}
v|^{\fr{6-d}{d-2}}) |\nabla v| \|_{L^2_t L^{\fr{2d}{d+2}}_x} \\
&\le \|
P_{> N} v\|_{L^{\fr{2(d+2)}{d-2}}_{t, x}} \|
v\|_{L^{\fr{2(d+2)}{d-2}}_{t, x}}^{\fr{6-d}{d-2}} \| \nabla
v\|_{L^{\fr{2(d+2)}{d-2}}_t L^{\fr{2d(d+2)}{d^2+4}}_x}
\end{align*}
which goes to $0$ by dominated convergence. This establishes \eqref{eqn:commutator_deriv}. The proof of \eqref{eqn:commutator_noderiv}is similar. Write
\begin{align*}
\|P_{\le N} F(v) - F(P_{\le N} v)\|_{L^2_t L^{\fr{2d}{d+2}}_x} \le
\| P_{>N} F(v)\|_{L^2_t L^{\fr{2d}{d+2}}_x} + \|
F(v) - F(P_{\le N} v)\|_{L^2_tL^{\fr{2d}{d+2}}_x}.
\end{align*}
By H\"{o}lder, Bernstein, and the chain rule,
\begin{align*}
\|P_{>N} F(v) \|_{L^2_tL^{\fr{2d}{d+2}}_x} \lesssim N^{-1} \|
v|_{L^{\fr{2(d+2)}{d-2}}_{t, x}}^{\fr{4}{d-2}} \| \nabla
v\|_{L^{\fr{2(d+2)}{d-2}}_t L^{\fr{2d(d+2)}{d^2+4}}_x} = O(N^{-1}).
\end{align*}
Using Bernstein, H\"{o}lder, and Sobolev embedding, and the pointwise
bound
\[
|F(v) - F(P_{\le N}v)| \lesssim |P_{>N} v| (|v|^{\fr{4}{d-2}} +
|P_{\le N} v|^{\fr{4}{d-2}} ),
\]
we obtain
\begin{align*}
\|F(v) - F(P_{\le N} v)\|_{L^2_t L^{\fr{2d}{d+2}}_x} &\le \|(
|v|^{\fr{4}{d-2}} +  |P_{\le N}v|^{\fr{4}{d-2}}) P_{> N} v \|_{L^2_t L^{\fr{2d}{d+2}}_x}\\
&\lesssim_{ |I|} (\| \nabla v\|_{L^\infty_t L^2_x}^{\fr{4}{d-2}}+
+  \| \nabla v\|_{L^\infty_t L^2_x}^{\fr{4}{d-2}} )\| \nabla P_{>N} v \|_{L^\infty_t L^2_x}.
\end{align*}
As $v \in C^0_t \dot{H}^1_x(I \times \mf{R}^d)$, the orbit
$\{v(t)\}_{t \in I}$ is compact in  $\dot{H}^1(\mf{R}^d)$. The Riesz characterization of $L^2$ compactness therefore 
implies that the right side goes to $0$ as $N \to \infty$. 
\end{proof}

Now suppose that $\phi_n = e^{it_n H} G_n S_n \phi$ as in the 
statement of Proposition~\ref{prop:localized_initial_data}. If 
$\mu = -1$, assume also that 
$\| \phi\|_{\dot{H}^1} < \| W\|_{\dot{H}^1}, \ E(\phi) < E_{\Delta}(W)$. 
We first  construct ``quasi-approximate'' solutions $\tilde{v}_n$ which obey all 
of 
the conditions of the Proposition~\ref{prop:stability} except
possibly the hypothesis in
\eqref{eqn:stability_prop_hyp_2} about matching 
initial data. A slight modification of the $\tilde{v}_n$ will then yield 
genuine approximate solutions.

If $t_n \equiv 0$, let $v$ be the solution to the potential-free
problem \eqref{eqn:ckstt_eqn}
provided by Conjecture~\ref{conjecture:ckstt} with $v(0) 
= \phi$. If $N_n^2 t_n \to \pm \infty$, let $v$ be the solution to 
\eqref{eqn:ckstt_eqn} which scatters in 
$\dot{H}^1$ to
 $e^{\fr{it\Delta}{2}} \phi$ as $t 
\to \mp \infty$. Note the reversal of signs. 

Put 
\begin{equation}
\tilde{N}_n' = ( \tfr{N_n}{N_n'} )^{\fr{1}{2}},
\label{eqn:approx_solution_freq_cutoffs}
\end{equation}
Let $T > 0$ denote a large constant to be chosen later, 
and define
\begin{equation}
\label{eqn:localized_initial_data_v_n-tilde_defn}
\tilde{v}_n^T(t) = \left\{ \begin{array}{cc}  e^{ -\fr{it|x_n|^2}{2}} \tilde{G}_n[
    S_n P_{\le \tilde{N}_n'} v] (t+t_n)  & |t| \le TN_n^{-2}\\
e^{-i(t-TN_n^{-2}) H} \tilde{v}_n^T(TN_n^{-2}), & TN_n^{-2} \le t \le 2\\
e^{-i(t+TN_n^{-2})H} \tilde{v}_n^T(-TN_n^{-2}), & -2 \le t \le -TN_n^{-2}
\end{array}\right.
\end{equation}
The time translation by $t_n$ is needed to undo
the time translation built into the operator $\tilde{G}_n$; see
\eqref{eqn:inv_strichartz_frameops_1}. We will suppress the
superscript $T$ unless we need to emphasize the role of that parameter.
Introducing the notation
\[
\begin{split}
v_n(t, x) &= [\tilde{G}_nv](t + t_n, x) = N_n^{\fr{d-2}{2}} v(N_n^2t,
N_n(x-x_n)),\\
 \chi_n(x) &= \chi(N_n'(x - x_n)),
\end{split}
\]
where $\chi$ is the function used to define the spatial cutoff
operator $S_n$ in \eqref{eqn:inv_strichartz_frameops_2}, and using the
identity $\tilde{G}_n \chi =
\chi_n \tilde{G}_n$, we can also write the top expression in
\eqref{eqn:localized_initial_data_v_n-tilde_defn} as
\[
\tilde{v}_n(t) = e^{-\fr{it|x_n|^2}{2}} \chi_n P_{\le
      \tilde{N}_n' N_n} v_n, \quad |t| \le TN_n^{-2}.
\]

As discussed previously, inside the ``potential-free'' window $\tilde{v}_n$ is essentially 
a modulated solution to \eqref{eqn:ckstt_eqn} with cutoffs applied in both space, to place the solution in 
$C_t \Sigma_x$, 
and frequency, to enable taking an extra derivative in the error
analysis below. 

On the time interval $|t| \le TN_n^{-2}$, we use Lemma
\ref{lma:approximation_lemma} and the fact that
$\|v\|_{L^\infty_t \dot{H}^1_x} \lesssim \| \phi\|_{\dot{H}^1}$ to deduce
\[
\limsup_{n} \| \tilde{v}_n\|_{L^\infty_t \Sigma_x ( |t| \le TN_n^{-2})} \lesssim \| \phi\|_{\dot{H}^1},
\]
therefore 
\begin{align}
\limsup_{n} \| \tilde{v}_n\|_{L^\infty_t \Sigma_x ( [-2, 2])} \lesssim \| \phi\|_{\dot{H}^1}. \label{eqn:approx_solution_energy_bound}
\end{align}
 From \eqref{eqn:ckstt_spacetime_bound}, \eqref{eqn:approx_solution_energy_bound}, and Strichartz, 
we obtain 
\begin{align}
\| \tilde{v}_n\|_{L^{\fr{2(d+2)}{d-2}}_{t, x}([-2,2] \times \mf{R}^d)} \le
C( \| \phi\|_{\dot{H}^1}) \label{eqn:approx_solution_scattering_size_bound} \quad \text{for
} n \text{ large}.
\end{align}
Let 
\[e_n = (i\partial_t - H) \tilde{v}_n -F(\tilde{v}_n).
\]
We show that
\begin{align}
  \lim_{T \to \infty} \limsup_{n \to \infty} \| H^{\fr{1}{2}}
  e_n\|_{N([-2, 2])} =
  0, \label{eqn:approximate_solution_nonlinear_error_estimate}
\end{align}
so that by taking $T$ large enough the $\tilde{v}_n$ will satisfy the
second error condition in \eqref{eqn:stability_prop_hyp_2} for all $n$
sufficiently large. Our first task is to deal with the time interval
$|t| \le TN_n^{-2}$.
\begin{lma}
\label{lma:short_time_approximation}
$\lim_{T \to \infty} \limsup_{n \to \infty} \| H^{\fr{1}{2}}
e_n\|_{N( |t| \le TN_n^{-2})} = 0.$
\end{lma}

\begin{proof}
When $-TN_n^{-2} \le
t \le TN_n^{-2}$, we compute
\begin{align*}
e_n &= e^{- \fr{it |x_n|^2}{2}} [ 
\chi_n P_{\le \tilde{N}_n' N_n} F(v_n)   -
\chi_n^{\fr{d+2}{d-2}}F(P_{\le \tilde{N}_n' N_n} v_n)  \\ 
&+ \fr{ |x_n|^2 - |x|^2}{2} (P_{\le \tilde{N}_n' N_n} v_n) \chi_n +
\fr{1}{2} (P_{\le \tilde{N}_n' N_n} v_n) \Delta \chi_n + (\nabla P_{
  \le \tilde{N}_n' N_n} v_n) \cdot \nabla \chi_n]\\
&= e^{- \fr{it|x_n|^2}{2}} [(a) + (b) + (c) + (d)],
\end{align*}
and estimate each term separately in the dual Strichartz space $N(\{|t|
\le TN_n^{-2} \})$. Write
\begin{align*}
(a) &= \chi_n P_{\le \tilde{N}_n' N_n}  F(v_n)  -
\chi_n^{\fr{d+2}{d-2}} F(
P_{\le \tilde{N}_n' N_n} v_n) \\
&= \chi_n[ P_{\le \tilde{N}_n' N_n} F(v_n) - F( P_{\le_{\tilde{N}_n' N_n}}
v_n)]  + \chi_n(1 - \chi_n^{\fr{4}{d-2}}) F (P_{\le \tilde{N}_n' N_n} v_n) \\
&= (a') + (a'').
\end{align*}
By the Leibniz rule and a change of variables,
\begin{equation}
\begin{split}
&\| \nabla (a')\|_{L^2_t L^{\fr{2d}{d+2}}_x (|t| \le TN_n^{-2})}
\\
&\le \| \nabla[P_{\le \tilde{N}_n'} F(v) - F(P_{\le \tilde{N}_n'}
v)]\|_{L^2_t L^{\fr{2d}{d+2}}_x (|t| \le T)} \\
&+ \| [P_{\le
  \tilde{N}_n' N_n } F(v_n) - F( P_{\le \tilde{N}_n' N_n} v_n)] \nabla
\chi_n \|_{L^2_t L^{\fr{2d}{d+2}}_x(|t| \le TN_n^{-2})}. \label{eqn:a'_eqn_0}
\end{split}
\end{equation}
By Lemma~\ref{lma:CKSTT_nonlinearity_estimates}, the first term
disappears in the limit as $n \to \infty$. That lemma also applies to the second term after a
change of variables to give
\begin{align*}
&\| [P_{\le
  \tilde{N}_n' N_n } F(v_n) - F( P_{\le \tilde{N}_n' N_n} v_n)] \nabla
\chi_n \|_{L^2_t L^{\fr{2d}{d+2}}_x(|t| \le TN_n^{-2})}\\
 &\lesssim N_n' \| P_{\le \tilde{N}_n' N_n} F(v_n) - F(P_{\le
  \tilde{N}_n' N_n} v_n)\|_{L^2_t L^{\fr{2d}{d+2}}_x (|t| \le T
  N_n^{-2})}\\
&\lesssim \tfr{N_n'}{N_n} \| P_{\le \tilde{N}_n'} F(v) - F(P_{\le
  \tilde{N}_n'} v) \|_{L^2_t L^{\fr{2d}{d+2}}_x (|t| \le T)} \to 0 \text{ as } n \to \infty.
\end{align*}
Therefore 
\[
\lim_{n \to \infty} \| \nabla(a')\|_{L^2_t
  L^{\fr{2d}{d+2}}_x(|t| \le TN_n^{-2})} = 0.
\]
 By changing variables,
using the bound $|x_n| \lesssim N_n$, and referring to Lemma~\ref{lma:CKSTT_nonlinearity_estimates} once more,
\begin{align*}
&\| |x|(a')\|_{L^2_t L^{\fr{2d}{d+2}}_x} \lesssim N_n \| P_{\le
  \tilde{N}_n' N_n} F(v_n) - F(P_{\le \tilde{N}_n' N_n}
v_n)\|_{L^2_tL^{\fr{2d}{d+2}}_x (|t| \le TN_n^{-2} )}\\
&\lesssim \| P_{\le \tilde{N}_n'} F(v) - F(P_{\le \tilde{N}_n'} v) \|_{L^2_t
  L^{\fr{2d}{d+2}}_x (|t| \le T)} \to 0 \text{ as } n \to \infty.
\end{align*}
It follows from Lemma \ref{lma:equivalence_of_norms} that
\begin{align*}
\lim_{n \to \infty} \| H^{\fr{1}{2}} (a')\|_{L^{2}_t
  L^{\fr{2d}{d+2}}_x (|t| \le TN_n^{-2})} = 0.
\end{align*}
To estimate $(a'')$, we use the Leibniz rule, a change of variables,
H\"{o}lder, Sobolev embedding, the bound
\eqref{eqn:ckstt_all_strichartz_bounds}, and dominated convergence to
obtain
\begin{align*}
&\| \nabla(a'') \|_{L^2_t L^{\fr{2d}{d+2}}_x} \lesssim \| |P_{\le \tilde{N}_n N_n} v_n|^{\fr{4}{d-2}}
\nabla P_{\le \tilde{N}_n' N_n} v_n \|_{L^2_t
  L^{\fr{2d}{d+2}}_x(|t| \le TN_n^{-2}, \ |x-x_n| \sim (N_n')^{-1})}
\\
&+ \tfr{N_n'}{N_n}\|P_{\le \tilde{N}_n' N_n }
v_n\|_{L^{\infty}_t L^{\fr{2d}{d-2}}_x }^{\fr{d+2}{d-2}}\\
&\lesssim \| \nabla v\|_{L^{\fr{2(d+2)}{d-2}}_t
  L^{\fr{2d(d+2)}{d^2+4}}_x} \| P_{\le \tilde{N}_n'} v\|^{\fr{4}{d-2}}_{L^{\fr{2(d+2)}{d-2}}_{t, x}
  (|t| \le T, \ |x| \sim \fr{N_n}{N_n'})}  + O(\tfr{N_n'}{N_n})\\
&\lesssim C(E(v)) ( \|P_{> \tilde{N}_n'} v\|_{L^{\fr{2(d+2)}{d-2}}_{t,
    x} } + \| v\|_{L^{\fr{2(d+2)}{d-2}}_{t, x} (|t|\le T, |x|
  \gtrsim \tfrac{N_n}{N_n'})})^{\fr{4}{d-2}} + O(\tfr{N_n'}{N_n})\\
&= o(1) + O(\tfr{N_n'}{N_n}).
\end{align*}
Similarly,
\begin{align*}
&\| |x| (a'')\|_{L^2_t L^{\fr{2d}{d+2}}_x} \sim \| F(P_{\le
  \tilde{N}_n'} v)\|_{L^{\fr{2(d+2)}{d-2}}_t L^{\fr{2d}{d-2}}_x (|t|
  \le T, |x| \sim \tfr{N_n}{N_n'})}\\
&\lesssim ( \|P_{> \tilde{N}_n'} v\|_{L^{\fr{2(d+2)}{d-2}}_t L^{\fr{2d}{d-2}}_x (|t|
  \le T)} + \| v\|_{L^{\fr{2(d+2)}{d-2}}_t L^{\fr{2d}{d-2}}_x
  (|t| \le T, |x| \sim \tfr{N_n}{N_n'})})^{\fr{d+2}{d-2}}\\
&= o(1).
\end{align*}
Therefore 
\[
\lim_{N \to \infty}\| H^{\fr{1}{2}} (a'')\|_{L^2_t L^{\fr{2d}{d+2}}_{t, x}
  (|t| \le TN_n^{-2})} = 0
\] 
as well. This completes the analysis for $(a)$.

The estimates for $(b), \ (c), \ (d)$ are less involved. For $(b)$,
note that on the support of the function we have $\left| |x_n|^2 -
  |x|^2\right| = |x_n - x||x_n + x| \sim N_n (N_n')^{-1}.$ Thus by
H\"{o}lder and Sobolev embedding,
\begin{align*}
&\| \nabla (b)\|_{L^1_t L^2_x(|t| \le TN_n^{-2})} \\
&\lesssim
\tfr{N_n}{N_n'} \| \nabla P_{\le \tilde{N}_n' N_n} v_n\|_{L^1_tL^2_x
  (|t| \le TN_n^{-2})} + N_n \| P_{\le \tilde{N}_n' N_n} v_n
\|_{L^1_t L^2_x (|t| \le TN_n^{-2}, \ |x-x_n| \sim (N_n')^{-1})}\\
&\lesssim (N_n'N_n)^{-1} \| \nabla v_n \|_{L^\infty_t L^2_x} \to 0
\quad \text{as} \quad n \to \infty.
\end{align*}
Using H\"{o}lder and Sobolev embedding, we have
\begin{align*}
\| |x| (b)\|_{L^1_t L^2_x(|t| \le TN_n^{-2})} &\sim \tfr{N_n^2}{N_n'} \| P_{\le
  \tilde{N}_n' N_n} v_n\|_{L^1_t L^2_x (|t| \le TN_n^{-2}, |x -
  x_n| \lesssim (N_n')^{-1})}\\
&\lesssim \left\{\begin{array}{cc} (N_n')^{-2} \| \nabla
    v_n\|_{L^\infty_t L^2_x}, & \lim_{n\to\infty} N_n^{-1} |x_n| = 0\\
\|v_n\|_{L^\infty_t L^2_x} = O(N_n^{-1}), & \lim_{n\to \infty} N_n^{-1} |x_n| > 0,
\end{array}\right.
\end{align*}
which vanishes as $n \to \infty$ in either case. Thus $\| H^{1/2}
(b)\|_{L^1_t L^2_x} \to 0$. The term $(c)$ is dealt with
similarly. For (d), use H\"{o}lder, Bernstein, and the
definition \eqref{eqn:approx_solution_freq_cutoffs} of the frequency cutoffs $\tilde{N}_n'$ to obtain
\begin{align*}
\| \nabla (d)\|_{L^1_t L^2_x (|t| \le TN_n^{-2})} &\lesssim N_n' \|
|\nabla|^2 P_{\le \tilde{N}_n' N_n} v_n\|_{L^1_t L^2_x} + \| |\nabla
P_{\le \tilde{N}_n' N_n} v_n| ( |\nabla|^2 \chi_n) \|_{L^1_t L^2_x}\\
&\lesssim \left[ \left(\tfr{N_n'}{N_n}\right)^{\fr{1}{2}} + \left(
      \tfr{N_n'}{N_n} \right)^2 \right] \| \nabla v_n\|_{L^\infty_t
    L^2_x} \to 0.
\end{align*}
Applying H\"{o}lder in the time variable, we get
\begin{align*}
\| |x|(d) \|_{L^1_t L^2_x (|t| \le TN_n^{-2})} \lesssim
\tfr{N_n'}{N_n} \| \nabla v_n\|_{L^\infty_t L^2_x} \to 0.
\end{align*}
This completes the proof of the lemma.
\end{proof}

Next, we estimate the error over the time intervals $[-2,
TN_n^{-2}]$ and $[TN_n^{-2}, 2]$.

\begin{lma}
\label{lma:long_time_approximation}
$\lim_{T \to \infty} \limsup_{n \to \infty} \| H^{\fr{1}{2}} e_n
\|_{N([-2, TN_n^{-2}] \cup [TN_n^{-2}, 2 ])} = 0$.
\end{lma}

\begin{proof}
We consider just the forward time interval as the other interval is
treated similarly. Since $\tilde{v}_n^T$ solves the linear equation, the error
$e_n$ is just the nonlinear term:
\begin{align*}
e_n = (i\partial_t - H)\tilde{v}_n^T - F(\tilde{v}_n^T) = -F(\tilde{v}_n^T).
\end{align*}
By the chain rule (Corollary~\ref{cor:fractional_chain_rule}) and Strichartz, 
\begin{align*}
\| H^{\fr{1}{2}} e_n \|_{N ([ TN_n^{-2}, 2 ])} \lesssim \|
\tilde{v}_n^T \|_{L^{\fr{2(d+2)}{d-2}}_{t, x}([TN_n^{-2},
  2 ])}^{\fr{4}{d-2}} \| \tilde{v}_n^T (TN_n^{-2}) \|_{\Sigma}.
\end{align*}
By definition $\tilde{v}_n^T(TN_n^{-2}) = e^{-\fr{iTN_n^{-2}
    |x_n|^2}{2}} \tilde{G}_n S_n P_{\le \tilde{N}_n'} v (TN_n^{-2} - t_n)$, so Lemma
\ref{lma:approximation_lemma} implies that
\begin{align*}
\limsup_{n \to \infty} \| \tilde{v}_n^T (TN_n^{-2}) \|_{\Sigma}
&\lesssim \left\{ \begin{array}{cc} \| v|_{L^\infty_t \dot{H}^1_x}, &
  \lim_{n \to \infty} N_n^{-1} |x_n| = 0,\\
\| v\|_{L^\infty_t H^1_x}, & \lim_{n \to \infty} N_n^{-1} |x_n| > 0
\end{array}\right.
\end{align*}
is bounded in either case. Using Strichartz and interpolation, it
suffices to show
\begin{align*}
\lim_{T \to \infty} \limsup_{n \to \infty} \| \tilde{v}_n^T
\|_{L^\infty_T L^{\fr{2d}{d-2}}_x ( [TN_n^{-2}, 2 ])} = 0.
\end{align*}

As we are assuming Conjecture~\ref{conjecture:ckstt}, there exists $v_\infty \in \dot{H}^1$
so that 
\[
\lim_{t \to \infty} \| v(t) - e^{\fr{it\Delta}{2}} v_\infty
\|_{\dot{H}^1_x} = 0.
\]
Then one also has
\begin{align*}
\lim_{t \to\infty} \limsup_{n \to \infty} \| P_{\le \tilde{N}_n'} v(t)
- e^{\fr{it\Delta}{2}} v_\infty \|_{\dot{H}^1_x} = 0,
\end{align*}
and Lemma~\ref{lma:approximation_lemma} implies that 
\begin{align*}
\lim_{T \to \infty} \limsup_{n \to \infty} \| \tilde{v}_n (T
N_n^{-2}) - e^{-\fr{iTN_n^{-2} |x_n|^2}{2}}  G_n S_n (e^{\fr{iT\Delta}{2}} v_\infty) 
\|_{\Sigma} = 0.
\end{align*}
An application of Strichartz and Corollary~\ref{cor:strong_conv_cor}
yields
\begin{align*}
\tilde{v}_n(t) &= e^{-i(t-TN_n^{-2})H} [ \tilde{v}_n(TN_n^{-2})] \\
&=e^{-i(t-TN_n^{-2})H} [e^{-\fr{iTN_n^{-2}|x_n|^2}{2}}  G_n S_n
e^{\fr{i T\Delta}{2}}  v_\infty] + \text{error}\\
&= e^{-itH} [G_n S_n v_\infty] + \text{error}
\end{align*}
where $\lim_{T \to \infty} \limsup_{n \to \infty} \|
\text{error} \|_{\Sigma} = 0$ uniformly in $t$. By Sobolev embedding,
\begin{align*}
  &\lim_{T \to \infty} \limsup_{n \to \infty} \| \tilde{v}_n
  \|_{L^\infty_t L^{\fr{2d}{d-2}}_x ([TN_n^{-2}, 2 ])} \\
  &= \lim_{T\to \infty} \limsup_{n \to \infty} \| e^{-itH} [
  G_n S_n v_\infty] \|_{L^\infty_t L^{\fr{2d}{d-2}}_x
    ([TN_n^{-2}, 2 ])}.
\end{align*}
A standard density argument using the dispersive
estimate for $e^{-itH}$ shows that the last limit is zero. 
\end{proof}
Lemmas \ref{lma:short_time_approximation} and
\ref{lma:long_time_approximation} together establish
\eqref{eqn:approximate_solution_nonlinear_error_estimate}. 

\begin{lma}[Matching initial data]
\label{lma:matching_initial_data}
Let $u_n(0) = e^{it_n H} G_n S_n \phi$ as in
Proposition~\ref{prop:localized_initial_data}.  Then
\begin{align*}
\lim_{T \to \infty} \limsup_{n \to\ \infty} \| \tilde{v}_n^T(-t_n) -
u_n(0) \|_{\Sigma} = 0. 
\end{align*}
\end{lma}

\begin{proof}
  If $t_n \equiv 0$, then by definition $\tilde{v}_n^T(0) = G_n S_n
  P_{\le N_n'} \phi$, so Lemma \ref{lma:approximation_lemma} and the
  definition \eqref{eqn:approx_solution_freq_cutoffs} of the frequency
  parameter
  $N_n'$ imply
\[
\lim_{n \to \infty} \| \tilde{v}_n^T(0) - u_n(0)\|_{\Sigma} \lesssim \lim_{n \to
  \infty} \left\{\begin{array}{cc} \|P_{> N_n'} \phi\|_{H^1}, &
    \lim_{n \to \infty} N_n^{-1} |x_n| > 0\\
\| P_{>N_n'} \phi \|_{\dot{H}^1}, & \lim_{n \to \infty} N_n^{-1} |x_n|
= 0
\end{array}\right\} = 0
\]

Next we consider the case $N_n^2 t_n \to \infty$; the case $N_n^2 t_n
\to -\infty$ works similarly. Arguing as in the previous lemma and
recalling that in this case, the solution $v$ was chosen
to scatter \emph{backward} in time to $e^{\fr{it\Delta}{2}}\phi$, for
$n$ large we have
\begin{align*}
\tilde{v}_n^T(-t_n) = e^{it_n H} [ G_n S_n
\phi] + \text{error}
\end{align*}
where $\lim_{T \to \infty} \limsup_{n \to \infty} \|\text{error}
\|_{\Sigma} \to 0$. The claim follows.
\end{proof}

For each fixed $T > 0$, set
\begin{align}
\tilde{u}_n^T(t) = \tilde{v}_n^T(t - t_n), \label{eqn:defn_of_approximate_solution}
\end{align}
which is defined for $t \in [-1, 1]$. Then for a fixed large value of
$T$, this is an approximate solution for all $n$ sufficiently large in
the sense of Proposition~\ref{prop:stability}. Indeed, by
\eqref{eqn:approx_solution_energy_bound} and
\eqref{eqn:approx_solution_scattering_size_bound}, $\tilde{u}_n^T$
satisfy the hypotheses \eqref{eqn:stability_prop_hyp_1} with $E \lesssim
\|\phi\|_{\dot{H}^1}$ and $L = C( \| \phi\|_{\dot{H}^1})$. Lemmas
\ref{lma:short_time_approximation}, \ref{lma:long_time_approximation},
\ref{lma:matching_initial_data}, Sobolev embedding, and Strichartz
show that for any $\varepsilon > 0$, there exists $T > 0$ so that
$\tilde{u}_n^T$ satisfies the hypotheses
\eqref{eqn:stability_prop_hyp_2} for all large $n$. Invoking
Proposition~\ref{prop:stability}, we obtain the first claim of
Proposition~\ref{prop:localized_initial_data} concerning the existence
of solutions

The remaining assertion of Proposition~\ref{prop:localized_initial_data} regarding approximation by smooth functions will follow
from the next lemma. Recall that we use the notation
\[
\|f\|_{L^q_t \Sigma^r_x} = \| H^{\fr{1}{2}} f\|_{L^q_t L^r_x}.
\]

\begin{lma}
\label{lma:approximation_by_smooth_functions}
  Fix finitely many admissible $(q_k, r_k)$ with
  $2 \le r_k < d$. For every $ \varepsilon > 0$, there
  exists a smooth function $\psi^\varepsilon \in C^\infty_c( \mf{R}
  \times \mf{R}^d)$ such that for all $k$
  \begin{align*}
    \limsup_{T \to \infty} \limsup_{n \to \infty} \|
    \tilde{v}_n^T -  \tilde{G}_n [e^{-\fr{it N_n^{-2}|x_n|^2}{2}} \psi^
    \varepsilon](t+t_n) \|_{L^{q_k}_T \Sigma^{r_k}_x
      ( [ -2, 2 ])} < \varepsilon.
  \end{align*}
\end{lma}

\begin{proof}
We continue using the notation defined at the beginning. Let
\begin{align*}
  \tilde{w}_n^T = \left\{\begin{array}{cc} e^{-\fr{it|x_n|^2}{2}}
      \tilde{G}_n [
      S_n v](t+t_n), & |t| \le TN_n^{-2}\\
      e^{-i(t-TN_n^{-2})H}[ \tilde{w}_n^T(TN_n^{-2}) ], & t \ge TN_n^{-2}\\
      e^{-i(t + TN_n^{-2})H} [  \tilde{w}_n^T(-TN_n^{-2}) ], & t \le
      -TN_n^{-2}
    \end{array}\right.
\end{align*}
This is essentially $\tilde{v}_n^T$ in \eqref{eqn:localized_initial_data_v_n-tilde_defn} without the frequency cutoffs. 
We observe first that $\tilde{v}_n^T$ can be well-approximated by
$\tilde{w}_n^T$ in spacetime:
\begin{equation}
\label{eqn:approximation_by_smooth_functions_eqn_1}
\begin{split}
  \limsup_{n \to \infty} \| \tilde{v}_n^T - \tilde{w}_n^T
  \|_{L^{q_k}_t \Sigma^{r_k}_x ( [ -2, 2 ])} &= 0, \\
  \sup_{T > 0} \limsup_{n \to \infty} \| \tilde{w}_n^T
  \|_{L^{q_k}_t \Sigma^{r_k}_x( [ -2, 2 ])} &<
  \infty. 
\end{split}
\end{equation}
Indeed by dominated convergence,
\begin{align*}
\| \nabla (v - P_{\le \tilde{N}_n'} v) \|_{L^{q_k}_t L^{r_k}_x( \mf{R}
\times \mf{R}^d)} \to 0 \text{ as } n \to \infty,
\end{align*}
thus \eqref{eqn:approximation_by_smooth_functions_eqn_1} follows from Lemma
  \ref{lma:approximation_lemma} and the Strichartz inequality for
  $e^{-itH}$.

The next observation is that most of the spacetime norm of
$\tilde{w}_n^T$ is concentrated in the time interval $|t| \le
TN_n^{-2}$:
\begin{equation}
\label{eqn:approximation_by_smooth_functions_eqn_2}
  \lim_{T \to \infty} \limsup_{n \to \infty} \|
  \tilde{w}_n^T\|_{ L^{q_k}_t \Sigma^{r_k}_x ( [ -2, -TN_n^{-2}] 
\cup [TN_n^{-2},
    2 ])} = 0. 
\end{equation}
  To see this, it suffices by symmetry to consider the forward interval. Recall that $v$
  scatters forward in $\dot{H}^1$ to some $e^{\fr{it\Delta}{2}}v_\infty$. By Lemma~\ref{lma:approximation_lemma},
  \begin{align*}
    \lim_{T \to \infty} \limsup_{n \to \infty} \| (\tilde{G}_n S_n v
    (TN_n^{-2} - t_n)  - G_n S_n (e^{\fr{iT\Delta}{2}} v_\infty) \|_{\Sigma} = 0.
  \end{align*}
  By Strichartz,
  \begin{align*}
    \lim_{T \to \infty} \limsup_{n \to \infty} \| e^{\fr{iTN_n^{-2}|x_n|^2}{2}} \tilde{w}_n^T - e^{-i(t- TN_n^{-2}) H}[
    G_n S_n ( e^{\fr{iT\Delta}{2}} v_\infty)] \|_{L^{q_k}_t 
\Sigma^{r_k}_x ([TN_n^{-2}, 2 ])} = 0
  \end{align*}
  By Corollary~\ref{cor:strong_conv_cor} and Strichartz, for each $T
  > 0$ we have
  \[
\begin{split}
    &\limsup_{n \to \infty} \| e^{-i(t-TN_n^{-2})H} [ G_n S_n
    (e^{\fr{iT\Delta}{2}} v_\infty) ] -
    e^{\fr{iT (r_\infty)^2}{2}} e^{-itH} [ G_n S_n v_\infty]
    \| _{L^{q_k}_t \Sigma^{r_k}_x ( [TN_n^{-2}, 2 ])} = 0.
\end{split}
    \]
  For each $\varepsilon > 0$, choose $v_\infty^{\varepsilon} \in
  C^\infty_c$ such that $\| v_ \infty - v_\infty^\varepsilon
  \|_{\dot{H}^1} < \varepsilon$. By the dispersive estimate,
  \begin{align*}
    \| e^{-itH}[ G_n v_\infty^\varepsilon ] \|_{L^{q_k}_t L^{r_k}_x ( 
[TN_n^{-2}, 2 ])} \lesssim T^{-\fr{1}{q_k}}
    \| v_\infty^\varepsilon \|_{L^{r_k'}_x}
  \end{align*}
Combining the above with Strichartz and
  Lemma~\ref{lma:approximation_lemma}, we get
  \begin{align*}
    \limsup_{n \to \infty} \| \tilde{w}_n^T \|_{L^{q_k}_t \Sigma^{r_k}_x (
      [TN_n^{-2}, 2 ])} \lesssim o(1) + \varepsilon + O_{\varepsilon,
      q_k} ( T^{-\fr{1}{q_k} } ) \text{ as } T \to \infty.
  \end{align*}
  Taking $T \to
  \infty$, we find
  \begin{align*}
    \limsup_{T \to \infty} \limsup_{n \to \infty} \|
    \tilde{w}_n^T \|_{L^{q_k}_t \Sigma^{r_k}_x( [TN_n^{-2}, 2 ])} 
\lesssim
    \varepsilon
  \end{align*}
  for any $\varepsilon > 0$, thereby establishing \eqref{eqn:approximation_by_smooth_functions_eqn_2}.

Choose $\psi^{\varepsilon} \in C^\infty_c (\mf{R} \times
\mf{R}^d)$ such that $\sum_{k=1}^N \| v - \psi^\varepsilon
\|_{L^{q_k}_t \dot{H} _x^{1, r_k} } < \varepsilon$. By combining Lemma
\ref{lma:approximation_lemma} with \eqref{eqn:approximation_by_smooth_functions_eqn_1} and \eqref{eqn:approximation_by_smooth_functions_eqn_2}, we get
\begin{align*}
 \lim_{T \to \infty} \limsup_{n \to \infty} \|
  \tilde{v}_n(t, x) - e^{-\fr{it |x_n|^2}{2}} \tilde{G}_n
  \psi^\varepsilon (t+t_n) \|_{L^{q_k}_t
    \Sigma^{r_k}_x( [ -2, 2 ] )} \lesssim \varepsilon.
\end{align*}
This completes the proof of the lemma, hence 
Proposition~\ref{prop:localized_initial_data}.
\end{proof}

\begin{rmk}
From the proof it is clear that that the proposition also holds if the interval
$I = [-1, 1]$ is replaced by any smaller interval.
\end{rmk}

\section{Palais-Smale and the proof of Theorem~\ref{thm:main_theorem}}
\label{section:palais-smale}

In this section we prove a Palais-Smale condition on sequences
of blowing up solutions to \eqref{eqn:nls_with_quadratic_potential}. 
This will quickly lead to the proof of Theorem~\ref{thm:main_theorem}. 

For a maximal solution $u$ to \eqref{eqn:nls_with_quadratic_potential}, 
define 
\[
S_*(u, L) = \sup \{ S_I(u) : I \text{ is an open interval with } \le L\},
\]
where we set $S_I(u) = \infty$ if $u$ is not 
defined on $I$. All solutions in this section are assumed 
to be maximal. By the triangle inequality, 
finiteness of $S_*(u, L)$ for some $L$ implies finiteness for all $L$.
Set 
\[
\begin{split}
\Lambda_d(E, L) &= \sup \{ S_*(u, L) : u \text{ solves \eqref{eqn:nls_with_quadratic_potential}, } \mu = +1, \ E(u) = E\}\\
\Lambda_f(E, L) &= \sup \{ S_*(u, L) : u \text{ solves
  \eqref{eqn:nls_with_quadratic_potential}, } \mu = -1, \ E(u) = E, \\
&\| \nabla u(0)\|_{L^2} < 
\| \nabla W \|_{L^2}\}.
\end{split}
\]
Note as before that finiteness for some $L$ is 
equivalent to finiteness for all $L$. Finally, define
\[
\begin{split}
\Lambda_d(E) &= \lim_{L \to 0} \Lambda_d(E, L), \quad 
\Lambda_f(E) = \lim_{L \to 0} \Lambda_f(E, L), \\
\mcal{E}_d &= \{ E : \Lambda_d(E) < \infty\}, \quad
\mcal{E}_f 
= \{ E : \Lambda_f(E) < \infty\}.
\end{split}
\]
By the local theory, Theorem~\ref{thm:main_theorem} is equivalent to 
the assertions 
\[
\mcal{E}_d = [0, \infty), \quad \mcal{E}_f = [0, E_{\Delta}(W) ).
\]

Suppose Theorem 
\ref{thm:main_theorem} failed. By the small data theory, 
$\mcal{E}_d, \ \mcal{E}_f$ are nonempty and open, and the failure of 
Theorem~\ref{thm:main_theorem} implies the existence of a critical 
energy $E_c > 0$, with $E_c < E_{\Delta}(W)$ in the focusing case such 
that 
$\Lambda_d (E), \ \Lambda_f(E) = \infty$ for $E > E_c$ and $\Lambda_d
(E), \ \Lambda_f(E)  < \infty$ for all $E < E_c$.

Define the spaces
\begin{align*}
\dot{X}^1 = \left\{ \begin{array}{cc} L^{10}_{t, x} \cap
    L^{5}_t \Sigma^{\fr{30}{11}}_x ([-\tfr{1}{2}, \tfr{1}{2}]
    \times \mf{R}^d), & d = 3\\
L^{\fr{2(d+2)}{d-2}}_{t, x} \cap L^{\fr{2(d+2)}{d}}_t 
\Sigma^{\fr{2(d+2)}{d}}_x ( [-\tfr{1}{2}, \tfr{1}{2}] \times
\mf{R}^d), & d \ge 4. \end{array}\right.
\end{align*}
When $d = 3$, we also define
\begin{align*}
\dot{Y}^1 = \dot{X}^1 \cap L^{\fr{10}{3}}_t \Sigma^{\fr{10}{3}}_x (
[-\tfr{1}{2}, \tfr{1}{2}] \times \mf{R}^3).
\end{align*}

\begin{prop}[Palais-Smale]
\label{prop:palais-smale}
Assume Conjecture \ref{conjecture:ckstt} holds. Suppose that $u_n :
(t_n - \tfr{1}{2}, t_n + \tfr{1}{2}) \times \mf{R}^d \to \mf{C}$ is a
sequence of solutions with
\[
\lim_{n \to \infty} E(u_n) =E_c, \ \lim_{n \to \infty} S_{(t_n-\fr{1}{2}, t_n]} (u_n) = 
\lim_{n \to \infty} S_{ [t_n, t_n+\fr{1}{2})} (u_n) = \infty.
\]
In the focusing case, assume also that $E_c < E_{\Delta}(W)$ and $\|
\nabla u_n (t_n)\|_{L^2} < \| \nabla W\|_{L^2}$. Then there exists a 
subsequence such that $u_n(t_n)$ converges 
in $
\Sigma$.
\end{prop}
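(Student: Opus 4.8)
The plan is to apply the linear profile decomposition of Proposition~\ref{prop:lpd} to $u_n(t_n)$ and argue that the only configuration consistent with the blow-up hypothesis is the one in which a single ``bounded-frequency'' profile carries all the energy and the remainder vanishes in $\Sigma$. Since \eqref{eqn:nls_with_quadratic_potential} is autonomous I would first replace $u_n(t)$ by $u_n(t+t_n)$, so that without loss of generality $t_n\equiv 0$, the hypotheses read $S_{(-\tfr{1}{2},0]}(u_n)=S_{[0,\tfr{1}{2})}(u_n)=\infty$, and the goal becomes: $f_n:=u_n(0)$ has a $\Sigma$-convergent subsequence. One checks $\{f_n\}$ is bounded in $\Sigma$: in the defocusing case this is immediate from $E(u_n)\to E_c$ and positivity of every term of $E$; in the focusing case it follows from $E_\Delta(f_n)\le E(f_n)\to E_c<E_\Delta(W)$, the assumption $\|\nabla f_n\|_2<\|\nabla W\|_2$, and the sub-threshold coercivity estimates of \cite{kenig-merle_focusing_nls,kv_focusing_nls} (which give $E_\Delta(f_n)\ge 0$, hence $\||x|f_n\|_2^2=2(E(f_n)-E_\Delta(f_n))\le 2E(f_n)$). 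Passing to a subsequence, apply Proposition~\ref{prop:lpd} to get frames $\mcal{F}^j$, profiles $\phi^j$, and for each finite $J\le J^*$ a decomposition $f_n=\sum_{j=1}^J\phi_n^j+r_n^J$ with the orthogonality \eqref{eqn:lpd_orthogonality_of_frames}, the decouplings \eqref{eqn:lpd_sigma_decoupling}--\eqref{eqn:lpd_potential_energy_decoupling}, and the vanishing \eqref{eqn:lpd_vanishing_of_remainder}.

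\textbf{Distributing the energy.} Combining \eqref{eqn:lpd_sigma_decoupling} and \eqref{eqn:lpd_potential_energy_decoupling} (together with the separate decouplings of the $\dot{H}^1$ and $|x|$-weighted $L^2$ components of the $\Sigma$-norm, which are implicit in the proof of Proposition~\ref{prop:inv_strichartz}), I obtain the energy decoupling $\lim_n\bigl(E(f_n)-\sum_{j\le J}E(\phi_n^j)-E(r_n^J)\bigr)=0$ for each fixed $J$. For a type-2 frame one has $\||x|\phi_n^j\|_2\to 0$, so $E(\phi_n^j)\to E_\Delta(\phi^j)$ and $\|\phi_n^j\|_\Sigma\to\|\nabla\phi^j\|_2$. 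Hence the nonnegative numbers $\lim_nE(\phi_n^j)$ and $\lim_{J}\lim_nE(r_n^J)$ sum to at most $E_c$; in the focusing case the same bookkeeping and the sharp Sobolev inequality show each $\phi^j$ and each remainder is strictly sub-threshold. Consequently Proposition~\ref{prop:localized_initial_data} applies to every type-2 profile, and the definition of $E_c$ applies to every type-1 profile of energy strictly below $E_c$.

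\textbf{Ruling out the bad configurations.} Call a profile \emph{subcritical} if $\mcal{F}^j$ is of type~2, or $\mcal{F}^j$ is of type~1 with $\lim_nE(\phi_n^j)<E_c$. For a subcritical type-2 profile, Proposition~\ref{prop:localized_initial_data} produces for $n$ large a solution $v_n^j$ on $[-1,1]$ with $v_n^j(0)=\phi_n^j$ and $\limsup_n S_{[-1,1]}(v_n^j)\le C(E_c)$; for a subcritical type-1 profile, $\phi_n^j\equiv\phi^j$ and the solution $v^j$ with datum $\phi^j$ satisfies $S_{[-1,1]}(v^j)<\infty$ because $E(\phi^j)<E_c$. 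Suppose, for contradiction, that \emph{every} profile is subcritical (this includes the degenerate case $J^*=0$). Then I would build the approximate solution $u_n^J=\sum_{j=1}^J v_n^j+e^{-itH}r_n^J$ on $[-\tfr{1}{2},\tfr{1}{2}]$, note that $u_n^J(0)=f_n$ exactly, and verify the hypotheses of the stability Proposition~\ref{prop:stability}: a uniform-in-$J$ bound $\limsup_n\|u_n^J\|_{L^{\fr{2(d+2)}{d-2}}_{t,x}}\lesssim_{E_c}1$ (using \eqref{eqn:lpd_orthogonality_of_frames} to decouple the $v_n^j$, the smooth-approximation property \eqref{eqn:approximation_by_smooth_functions} of localized solutions, the smallness \eqref{eqn:lpd_vanishing_of_remainder} of the tail, and the fact that all but finitely many $\phi^j$ are in the small-data regime so that $\sum_j S(v_n^j)$ converges) and the error bound $\lim_{J\to J^*}\limsup_n\|H^{\tfr{1}{2}}[(i\partial_t-H)u_n^J-F(u_n^J)]\|_{N([-\tfr{1}{2},\tfr{1}{2}])}=0$, whose cross terms vanish by frame orthogonality via Corollaries~\ref{cor:strong_conv_cor} and \ref{cor:weak_conv_cor} (again using \eqref{eqn:approximation_by_smooth_functions}), and in which $e^{-itH}r_n^J$ is a legitimate perturbation precisely because it is small in $L^{\fr{2(d+2)}{d-2}}_{t,x}$ while merely bounded in $\Sigma$. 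Proposition~\ref{prop:stability} then gives $S_{[-\tfr{1}{2},\tfr{1}{2}]}(u_n)<\infty$ for $n$ large, contradicting $S_{(-\tfr{1}{2},0]}(u_n)=\infty$. Hence not every profile is subcritical.

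\textbf{The surviving configuration and conclusion.} A non-subcritical profile must be of type~1 with $\lim_nE(\phi_n^j)=E_c$ (the energy cannot exceed $E_c$ by the previous step). Since every profile is nontrivial and carries positive energy, this forces $J^*=1$, $\mcal{F}^1$ of type~1 (so $\phi_n^1\equiv\phi^1$), and $\lim_nE(r_n^1)=0$. In the defocusing case, positivity of $E$ gives $\|r_n^1\|_\Sigma^2\le 2E(r_n^1)\to 0$; in the focusing case, the $\dot{H}^1$-decoupling gives $\|\nabla r_n^1\|_2^2=\|\nabla f_n\|_2^2-\|\nabla\phi^1\|_2^2+o(1)<\|\nabla W\|_2^2$ (strict, as $\phi^1\neq 0$), whence sub-threshold coercivity yields $\|r_n^1\|_\Sigma^2\lesssim E(r_n^1)\to 0$ again. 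Therefore $u_n(0)=\phi^1+r_n^1\to\phi^1$ in $\Sigma$, which is the assertion of the proposition. I expect the previous step --- specifically the uniform spacetime bound and the error estimate for the multi-bubble approximate solution --- to be the main obstacle, since it is what forces the full strength of the convergence lemmas of Section~\ref{subsection:convergence} together with the smooth-approximation refinement built into Proposition~\ref{prop:localized_initial_data}.
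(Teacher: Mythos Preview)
Your overall strategy coincides with the paper's: apply the linear profile decomposition to $u_n(0)$, use energy decoupling, and split into ``one profile carries all of $E_c$'' (the paper's Case~1) versus ``every profile is strictly subcritical'' (the paper's Case~2), ruling out the latter by building the multi-bubble approximate solution $u_n^J=\sum_j v_n^j+e^{-itH}r_n^J$ and invoking stability. Your packaging is slightly different---you place a single type-2 profile at energy $E_c$ inside the ``subcritical'' case rather than treating it separately---but since the frame definitions force at most one type-1 profile, this reorganization is harmless.

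There are two concrete problems, one minor and one substantive. First, the assertion that $\||x|\phi_n^j\|_2\to 0$ for every type-2 frame is false for frames of type~\ref{enum:aug_frame_2_1} (i.e.\ $r_\infty>0$): a change of variables as in the proof of Lemma~\ref{lma:approximation_lemma} gives $\||x|G_n^jS_n^j\phi^j\|_2\to r_\infty\|\phi^j\|_2>0$. This does no real damage, since $E_\Delta(\phi^j)\le\lim_n E(\phi_n^j)$ holds in every case and that is all you need to feed into Proposition~\ref{prop:localized_initial_data}.

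The substantive gap is in the error estimate for the approximate solution. You attribute the vanishing of the cross terms to Corollaries~\ref{cor:strong_conv_cor} and~\ref{cor:weak_conv_cor} together with smallness of $e^{-itH}r_n^J$ in $L^{\fr{2(d+2)}{d-2}}_{t,x}$. Those corollaries concern the linear propagator and are not the relevant tools here; the profile--profile cross terms are handled by the elementary support-separation Lemma~\ref{lma:orthogonality_of_parameters} (via Lemma~\ref{lma:decoupling_of_nonlinear_profiles}). More importantly, the smallness of $e^{-itH}r_n^J$ in $L^{\fr{2(d+2)}{d-2}}_{t,x}$ is \emph{not} sufficient for the profile--remainder interaction. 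After applying $H^{1/2}$ to $F(u_n^J)-F(\sum_j v_n^j)$ one meets the term $|v_n^j|^{\fr{4}{d-2}}\,\nabla e^{-itH}r_n^J$, in which $\nabla e^{-itH}r_n^J$ is only \emph{bounded} in Strichartz norms (the smallness in \eqref{eqn:lpd_vanishing_of_remainder} is for the undifferentiated quantity), and $v_n^j$ is only bounded for fixed $j$; na\"ive H\"older yields a bounded, not small, result. The paper closes this by first using \eqref{eqn:approximation_by_smooth_functions} to localize $v_n^j$ to a spacetime box of size $(N_n^j)^{-2}\times(N_n^j)^{-1}$, and then invoking the local smoothing estimate of Corollary~\ref{cor:local_smoothing_cor}, which trades that localization for a small power of $\|e^{-itH}r_n^J\|_{L^{\fr{2(d+2)}{d-2}}_{t,x}}$. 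Without local smoothing the step \eqref{eqn:palais-smale_remainder_derivative_estimate} (and its $d=3$ analogue) cannot be completed, and the approximate-solution argument stalls. You should flag Corollary~\ref{cor:local_smoothing_cor} as the missing ingredient rather than the convergence corollaries.
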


\begin{proof}[Proof of Proposition~\ref{prop:palais-smale}]

  By replacing $u_n(t)$ with $u_n(t + t_n)$, we may
  assume $t_n \equiv 0$. Note that by energy conservation and
  Corollary \ref{cor:appendix_cor_1}, this time translation does not
  change the hypotheses of the focusing case.

  Observe (referring to the discussion in Section
  \ref{section:focusing_blowup} for the focusing case) that the
  sequence $u_n(0)$ is bounded in $\Sigma$. Applying
  Proposition~\ref{prop:lpd}, after passing
  to a subsequence we have a decomposition
\begin{align*}
u_n(0) = \sum_{j=1}^J e^{it_n^j H} G_n S_n \phi^j + w^J_n = \sum_{j = 1}^J \phi^j_n + w^J_n
\end{align*}
with the properties stated in that proposition. In particular, the
remainder has asymptotically trivial linear evolution:
\begin{equation}
\label{eqn:palais_smale_lpd_remainder_decay}
\lim_{J \to J^*} \limsup_{n \to \infty} \| e^{-itH} w_n^J \|_{L^{\fr{2(d+2)}{d-2}}_{t,x}},
\end{equation}
and we have asymptotic decoupling of energy:
\begin{equation}
\label{eqn:palais-smale_energy_decoupling}
\sup_{J} \lim_{n \to \infty} | E(u_n) - \sum_{j=1}^J E(\phi^j_n) - E(w^J_n) | = 
0 .
\end{equation}
Observe that $\liminf_{n} E(\phi^j_n) \ge 0$. This is obvious in the
defocusing case. In the focusing case,
\eqref{eqn:lpd_sigma_decoupling} and the discussion in Section
\ref{section:focusing_blowup} imply that
\[
\sup_j \limsup_n \| \phi^j_n\|_{\Sigma} \le\|u_n\|_{\Sigma} < \| \nabla W\|_{L^2},
\]
so the claim follows from Lemma \ref{lma:appendix_lma_1}.
Therefore, there are two possibilities.

\textbf{Case 1}: $\sup_{j} \limsup_{n \to \infty} E(\phi^j_n) = E_c$.

By combining \eqref{eqn:palais-smale_energy_decoupling} with 
the fact that 
the 
profiles $\phi^j_n$ are nontrivial in $\Sigma$, we deduce that $J^* = 1$ and
\[
u_n(0) = e^{i t_n H} G_n S_n \phi + w_n, \ \lim_{n \to \infty} \| 
w_n\|_{\Sigma} = 0.
\]
We will show that $N_n \equiv 1$ (thus $x_n = 0$ and $t_n =
0$). Suppose $N_n \to \infty$. 

 Proposition~\ref{prop:localized_initial_data}
implies that for all large $n$, there 
exists a unique 
solution $u_n$ on $[-\fr{1}{2}, \fr{1}{2}]$ with 
$u_n(0) = e^{it_n H} G_n S_n \phi$ and 
$\limsup_{n \to \infty} 
S_{(-\fr{1}{2}, \fr{1}{2})} (u_n) \le C(E_c)$. 
By perturbation theory (Proposition
\ref{prop:stability}), 
\[
\limsup_{n \to \infty} S_{[-\fr{1}{2}, \fr{1}{2}]} ( u_n) \le C(E_c),
\]
which is a contradiction. 
Therefore, $N_n \equiv 1, \ t^j_n \equiv 0, \ x_n^j 
\equiv 0$, and 
\[
u_n(0) = \phi + w_n
\]
for some $\phi \in \Sigma$. This is the desired conclusion.

\textbf{Case 2}: $\sup_{j} \limsup_{n \to \infty} E(\phi^j_n) \le E_c - 
2\delta$ for some $\delta > 0$. 

By the definition of $E_c$, there exist solutions 
$v_n^j: (-\tfr{1}{2}, \tfr{1}{2}) \times \mf{R}^d \to \mf{C}$ 
with 
\[
\| v^j_n\|_{L^{\fr{2(d+2)}{d-2}}_{t,x} ([-\fr{1}{2}, \fr{1}{2}])}
\lesssim_{E_c, \delta} E(\phi^j_n)^{\fr{1}{2}}.
\]
By standard arguments (c.f. \cite[Lemma 3.11]{matador}), this
implies the seemingly stronger bound
\begin{equation}
\| v^j_n\|_{\dot{X}^1} \lesssim_{E_c, \delta} E(\phi^j_n)^{\fr{1}{2}}. 
\label{eqn:palais-smale_nonlinear_profile_scattering_size}
\end{equation}
In the case $d = 3$, we also have $\| v^j_n \|_{\dot{Y}^1} \lesssim 
E(\phi^j_n)^{\fr{1}{2}}$. Put
\begin{equation}
u_n^J = \sum_{j=1}^J v_n^j + e^{-it H} w_n^J. \label{eqn:palais-smale_approx_solution}
\end{equation}
We claim that for sufficiently large $J$ and $n$, $u_n^J$ is an approximate 
solution in the sense 
of Proposition~\ref{prop:stability}. To prove this claim, we check that $u_n^J$ has the 
following three 
properties:
\begin{itemize}
\item [(i)] $\lim_{J \to J^*} \limsup_{n \to \infty} \| u_n^J(0) - u_n(0) 
\|_{\Sigma} = 0$.
\item [(ii)] $\limsup_{n \to \infty} \| u_n^J\|_{L^{\fr{2(d+2)}{d-2}}_{t, x}( 
[-T, T ])} \lesssim_{E_c, 
\delta} 1$ 
uniformly in $J$.
\item [(iii)] $\lim_{J \to J^*} \limsup_{n \to \infty} \| H^{\fr{1}{2}} e^J_n \|_{N( 
[-\fr{1}{2}, \fr{1}{2}])} = 0$, where 
\[
e_n = (i\partial_t - H) u_n^J - F(u_n^J).
\]
\end{itemize}

There is nothing to check for part (i) as $u^J_n(0) = u_n(0)$ by construction. 
The verification of 
(ii) 
relies on the asymptotic decoupling of the nonlinear profiles $v_n^j$, which we 
record in the 
following two lemmas.

\begin{lma}[Orthogonality]
\label{lma:orthogonality_of_parameters}
Suppose that two frames $\mcal{F}^j = (t^j_n, x_n^j, N_n^j), \
\mcal{F}^k = (t^k, x_n^k , N_n^k)$ are orthogonal, and let
$\tilde{G}_n^j, \ \tilde{G}_n^k$ be the associated spacetime scaling
and translation operators as defined in
\eqref{eqn:inv_strichartz_frameops_1}. Then for all
$\psi^j, \ \psi^k$ in $C^\infty_c (\mf{R} \times \mf{R}^d)$,
\begin{align*}
&\| (\tilde{G}^j_n \psi^j) (\tilde{G}^k_n \psi^k) \|_{L^{\fr{d+2}{d-2}}_{t, x}} + \| (\tilde{G}_n^j 
\psi^j) \nabla (\tilde{G}^k_n \psi^k) \|
_{L^{\fr{d+2}
{d-1}}_{t, x} } + \| |x| (\tilde{G}^j_n \psi^j) (\tilde{G}^k_n \psi^k) 
\|_{L^{\fr{d+2}{d-1}}_{t, x}} \\
&+ \| |x|^2 (\tilde{G}^j_n \psi^j) (\tilde{G}^k_n \psi^k) \|_{L^{\fr{d+2}{d}}_{t,x}} + \| ( 
\nabla \tilde{G}^j_n \psi^j) 
( \nabla \tilde{G}^k_n \psi^k) \|_{L^{\fr{d+2}{d}}_{t,x} } \to 0
\end{align*}
as $n \to \infty$.
When $d = 3$, we also have
\begin{align*}
\| |x|^2 (\tilde{G}^j_n \psi^j) (\tilde{G}^k_n \psi^k) \|_{L^5_t L^{\fr{15}{11}}_x} + \| 
(\nabla \tilde{G}^j_n \psi^j) (\nabla 
\tilde{G}^k_n \psi^k)\|_{L^5_t L^{\fr{15}{11}}_x} \to 0.
\end{align*}
\end{lma}

\begin{proof}
The arguments for each term are similar, and we only supply the details
for the second term.
Suppose $N_n^k (N_n^j)^{-1} \to \infty$. By the chain rule, a change of variables, and H\"{o}lder,
\[
\begin{split}
\| (\tilde{G}^j_n \psi^j) \nabla (\tilde{G}^k_n \psi^k) \|_{L^{\fr{d+2}{d-1}}_{t, x}} 
= \| \psi^j \nabla (\tilde{G}_n^j)^{-1} \tilde{G}_n^k \psi^k \|_{L^{\fr{d+2}{d-1}}_{t, x}}  \le \| \psi^j \chi_n \|_{L^{\fr{2(d+2}{d-2}}_{t, x}} \| \nabla \psi^k\|_{L^{\fr{2(d+2)}{d}}_{t, x}},
\end{split}
\]
where $\chi_n$ is the characteristic function of the support of
$\nabla (\tilde{G}_n^j)^{-1} \tilde{G}_n^k \psi^k$. As the support of
$\chi_n$ has measure shrinking to zero, we have
\[
\lim_{n \to \infty} \| \psi^j \chi_n\|_{L^{\fr{2(d+2)}{d-2}}_{t,x}} = 0.
\]
A similar argument 
deals with the case where $N_n^j (N_n^k)^{-1} \to \infty$. Therefore, we may suppose that 
\[
\tfr{N_n^k}{N_n^j} \to N_\infty \in (0, \infty).
\]
Make the same change 
of variables as before, and compute
\[
\nabla (\tilde{G}_n^j)^{-1} \tilde{G}_n^k \psi^k (t, x) = (\tfr{N_n^k}{N_n^j})^{\fr{d}{2}} (\nabla \psi^k)
[ \tfr{N_n^k}{N_n^j} t + (N_n^k)^2 (t_n^j - t_n^k), \tfr{N_n^k}{N_n^j} x + N_n^k( x_n^j - x_n^k) ].
\]
The decoupling statement \eqref{eqn:lpd_orthogonality_of_frames} implies that 
\[
(N_n^k)^2 (t_n^j - t_n^k) + N_n^k|x_n^j - x_n^k| \to \infty.
\]
Therefore, the supports of 
$\psi^j$ and $\nabla (\tilde{G}_n^j)^{-1} \tilde{G}_n^k \psi^k$ are
disjoint for large $n$.
\end{proof}

\begin{lma}[Decoupling of nonlinear profiles]
\label{lma:decoupling_of_nonlinear_profiles}
Let $v^j_n$ be the nonlinear solutions defined above. Then when $d \ge 4$,
\begin{align*}
\| v^j_n v^k_n\|_{L^{\fr{2(d+2)}{d-2}}_{t, x}} &+ \| v^j_n \nabla v^k_n 
\|_{L^{\fr{d+2}{d-1}}_{t, x}} + \| |
x| v^j_n v^k_n \|
_{L^{\fr{d+2}{d-1}}_{t, x}} \\
&+ \| ( \nabla v^j_n) (\nabla v^k_n) \|_{L^{\fr{2(d+2)}{d}}_{t,x}} + \| |x|^2 
v^j_n v^k_n 
\|_{L^{ \fr{2(d+2)}{d} }_{t,x} } \to 0
\end{align*}
as $n \to \infty$. When $d = 3$, the same statement holds with the last two 
expressions replaced 
by
\begin{align*}
\| (\nabla v^j_n) (\nabla v^k_n) \|_{L^5_t L^{\fr{15}{11}}_x} + \| |x|^2 v^j_n 
v^k_n \|_{L^5_t L^{\fr{30}
{11}}_x} \to 0.
\end{align*}
\end{lma}

\begin{proof}
  We spell out the details for the $\| v^j_n |x| v^k_n \|_{L^{\fr{d+2}{d-1}}_{t, x}}$
  term. Consider first the case $d \ge 4$.  As $2 < \tfr{2(d+2)}{d} <
  d$, by Proposition~\ref{prop:localized_initial_data} we can approximate $v^j_n$ in $\dot{X}^1$ by test functions
 \[
c^j_n \tilde{G}_n \psi^j, \quad \psi^j \in C^ \infty_c( \mf{R} \times
  \mf{R}^d), \quad c_n^j(t) = e^{-\fr{i(t-t_n^j) |x_n^j|^2}{2}}.
\]
By H\"{o}lder and a change of variables,
\begin{align*}
&\| v^j_n |x| v^k_n\|_{L^{\fr{d+2}{d-1}}_{t, x}} \le \| (v^j_n - c^j_n
\tilde{G}^j_n \psi^j)|x|v^k_n \|_{L^{\fr{d+2}{d-1}}_{t,x}} \\
&+ \| |x| \tilde{G}^j_n \psi^j
(v^k_n - c^k_n \tilde{G}^k_n\psi^k) \|_{L^{\fr{d+2}{d-1}}_{t,x}} 
+ \| |x| \tilde{G}^j_n
\psi^j \tilde{G}^k_n \psi^k \|_{L^{\fr{d+2}{d-1}}_{t,x}}\\
&\le \|( v^j_n - c_n^j \tilde{G}^j_n \psi^j)\|_{L^{\fr{2(d+2)}{d-2}}_{t, x} } \| 
 v^k_n\|_{\dot{X}^1 }  \\
&+ \| \psi^j \|_{L^{\fr{2(d+2)}{d-2}}_{t,x} } \| ( v^k_n - c^k_n 
\tilde{G}^k_n 
\psi^k) \|_{ \dot{X}^1 } + \| (\tilde{G}^j_n \psi^j) |x| (\tilde{G}^k_n \psi^k) 
\|_{L^{\fr{d+2}{d-1}}_{t, x}}
\end{align*}
By first choosing $\psi^j$, then $\psi^k$, then invoking the previous lemma, one 
obtains for any $\varepsilon > 0$ that
\begin{align*}
\limsup_{n \to \infty} \| v^j_n |x| v^k_n \|_{L^{\fr{d+2}{d-1}}_{t, x}} \le 
\varepsilon.
\end{align*}
When $d = 3$, we also approximate $v^j_n$ in $\dot{X}^1$ (which is
possible because the exponent $ \fr{30}{11}$ in the definition of
$\dot{X}^1$ is less than $3$), and estimate
\begin{align*}
&\| v^j_n |x| v^k_n \|_{L^{\fr{5}{2}}_{t,x}} \\
&\le \| (v^j_n - c^j_n
\tilde{G}^j_n \psi^j)|x|v^k_n \|_{L^{\fr{5}{2}}_{t,x}} + \| |x| \tilde{G}^j_n \psi^j
(v^k_n - c^k_n \tilde{G}^k_n\psi^k) \|_{L^{\fr{5}{2}}_{t,x}} + \| |x| \tilde{G}^j_n
\psi^j \tilde{G}^k_n \psi^k \|_{L^{\fr{5}{2}}_{t,x}}\\
&\le \|( v^j_n - c_n^j \tilde{G}^j_n \psi^j)\|_{L^{10}_{t, x} } \| v^k_n\|_{\dot{Y}^1 }\\
&+ \| \psi^j \|_{L^5_t L^{30}_x} \| v^k_n - c^k_n \tilde{G}^k_n \psi^k \|_{\dot{X}^1 } + \| (\tilde{G}^j_n \psi^j) |x| (\tilde{G}^k_n \psi^k) 
\|_{L^{\fr{5}{2}}_{t, x}}
\end{align*}
which, just as above, can be made arbitrarily small as $n \to \infty$. Similar 
approximation arguments deal with the other terms.
\end{proof}

Let us verify Claim (ii) above. In fact we will show that 
\begin{equation}
\label{eqn:palais_smale_stability_property_ii_enhanced}
\limsup_{n \to \infty} \| u_n^J \|_{\dot{X}^1([-\fr{1}{2}, \fr{1}{2}])} \lesssim_{E_c, \delta} 1 
\text{ uniformly in } J.
\end{equation}
First, we have
\begin{align*}
S(u_n^J) = \iint | \sum_{j=1}^J v^j_n  + e^{-itH} w_n^J |^{\fr{2(d+2)}{d-2}} \, dx 
dt \lesssim 
S( \sum_{j=1}^J 
v^j_n ) 
+ S( e^{-itH} w^J_n).
\end{align*}
By the properties of the LPD, $\lim_{J \to J^*} \limsup_{n \to \infty} 
S(e^{-itH} w^J_n) = 0$. 
Recalling \eqref{eqn:palais-smale_nonlinear_profile_scattering_size}, we have
\begin{align*}
S(\sum_{j=1}^J v^j_n ) &= \Bigl \| ( \sum_{j=1}^J v^j_n )^2  
\Bigr \|_{L^{\fr{d+2}{d-2}}_{t, x}}^{\fr{d+2}
{d-2}} \le ( \sum_{j=1}
^J \| v^j_n \|_{L^{\fr{2(d+2)}{d-2}}_{t, x}}^2 + \sum_{j \ne k} \| v^j_n v^k_n 
\|_{L^{\fr{d+2}{d-2}}_{t, 
x}} )^{ \fr{d+2}{d-2}}\\
&\lesssim ( \sum_{j=1}^J E( \phi^j_n) + o_J(1) )^{\fr{d+2}{d-2}}
\end{align*}
where for the last line we invoked Lemma~\ref{lma:decoupling_of_nonlinear_profiles}. Since 
energy 
decoupling implies $\limsup_{n \to \infty} \sum_{j=1}^J E(\phi^j_n) \le E_c$, 
we obtain $\lim_{J \to J^*} \limsup_{n \to \infty}  S(u_n^J) \lesssim_{E_c, 
\delta} 1$. 

Mimicking this argument lets us show that 
\[
\limsup_{n \to \infty} ( \| \nabla u_n^J \|_{L^{\fr{2(d+2)}{d}}_{t,x} } + \| |x| 
u_n^J\|_{L^{\fr{2(d+2)}{d}}
_{t,x}}) \lesssim_{E_c, \delta} 1 
\text{ uniformly in } J.
\]
This completes the verification of 
property (ii) in the case $d \ge 4$. The case $d = 3$ is dealt with in a similar 
fashion.

\begin{rmk}
The above argument shows that for each $J$ and each $\eta > 0$, there exists $J' 
\le J$ such 
that 
\[
\limsup_{n \to \infty} \| \sum_{j= J'}^J v^j_n \|_{\dot{X}^1 ([-\fr{1}{2}, \fr{1}{2}])} \le 
\eta.
\]
\end{rmk}

It remains to check property (iii) above, namely, that
\begin{equation}
\label{eqn:palais_smale_stability_iii}
\lim_{J \to J^*} \limsup_{n \to \infty} \| H^{1/2} e^J_n \|_{N( [-\fr{1}{2}, \fr{1}{2}])} = 0.
\end{equation}
Writing $F(z) = |z|^{\fr{4}{d-2}} z$, we decompose
\begin{align*}
e^J_n = [ \sum_{j=1}^J F(v^j_n) - F( \sum_{j=1}^J v^j_n ) ] + [ F(u_n^J - 
e^{-itH} w^J_n) - 
F(u^J_n)] = (a) + (b).
\end{align*}
Consider (a) first. Suppose $d \ge 6$. Using the chain rule $\nabla F( u) = 
F_z(u) \nabla u + 
F_{\overline{z}} (u) 
\overline{ \nabla u}$ and the estimates
\begin{align*}
|F_{z}(z)| + |F_{\overline{z}} (z)| = O(|z|^{\fr{4}{d-2}}), \ |F_{z}(z) - 
F_{z}(w)| + |F_{\overline{z}}(z) - 
F_{\overline{z}}(w)| = O( |z-w|^{\fr{4}{d-2}}),
\end{align*}
we compute
\begin{align*}
|\nabla (a)| \lesssim \sum_{j=1}^J \sum_{k \ne j} |v^k_n|^{\fr{4}{d-2}} |\nabla 
v^j_n| .
\end{align*}
By H\"{o}lder, Lemma~\ref{lma:decoupling_of_nonlinear_profiles}, and the 
induction hypothesis 
\eqref{eqn:palais-smale_nonlinear_profile_scattering_size},
\begin{align*}
\| \nabla (a)\|_{L^{\fr{2(d+2)}{d+4}}_{t, x}} \lesssim \sum_{j=1}
^J \sum_{k \ne j} \|  |v^k_n| | \nabla v^j_n| \|_{L^{\fr{d+2}{d-1}}_{t, 
x}}^{\fr{4}{d-2}} \| \nabla v^k_n \|
_{L^{\fr{2(d+2)}{d}}_{t,x}}^{\fr{d-6}{d-2}} = o_J(1)
\end{align*}
as $n \to \infty$. When $3 \le d \le 5$, we have instead
\begin{align*}
|\nabla(a)| \lesssim \sum_{j=1}^J \sum_{k \ne j} |v^k_n| | \nabla
v^j_n| O( \Bigl | 
\sum_{k=1}^J v^k_n \Bigr |
^{\fr{6-d}{d-2}} + |v^j_n|^{\fr{6-d}{d-2}}),
\end{align*}
thus
\begin{align*}
\| \nabla(a)\|_{L^{\fr{2(d+2)}{d+4}}_{t,x}} &\lesssim_J  \left( \sum_{j=1}^J \| 
v^j_n\|_{L^{\fr{2(d+2)}
{d-2}}_{t,x}}^{\fr{6-d}{d-2}} \right) \sum_{j=1}^J \sum_{k \ne j} \| |v^k_n| | 
\nabla v^j_n| \|_{L^{\fr{d
+2}{d-1}}_{t,x}} = o_J(1).
\end{align*}
Similarly, writing
\begin{align*}
|(a)| \le \sum_{j=1}^J \Bigl | |v^j_n|^{\fr{4}{d-2}} - | \sum_{k=1}^J v^k_n 
|^{\fr{4}{d-2}} \Bigr | |v^j_n| 
\lesssim \sum_{j=1}^J 
\sum_{k\ne j} |v^j_n| |v^k_n|^{\fr{4}{d-2}},
\end{align*}
we have
\begin{align*}
\| x (a) \|_{L^{\fr{2(d+2)}{d+4}}_{t, x}}  \sum_{j=1}^J 
\sum_{k \ne j} \| |x| v^j_n\|_{L^{\fr{2(d+2)}{d}}_{t,x}}^{\fr{d-6}{d-2}} \| |x| 
v^j_n v^k_n \|_{L^{\fr{d+2}
{d-1}}_{t, x}}^{\fr{4}{d-2}} = o_J(1).
\end{align*}
When $3 \le d \le 5$, 
\begin{align*}
|(a)| \lesssim \sum_{j=1}^J \sum_{k \ne j} |v^j_n |v^k_n| O( \Bigl |\sum_{k=1}^J 
v^k_n \Bigr |^{\fr{6-d}{d-2}} + |v^j_n|^{\fr{6-d}{d-2}}),
\end{align*}
hence also
\begin{align*}
\| |x|(a)\|_{L^{ \fr{2(d+2)}{d+4}}_{t,x}} = o_J(1).
\end{align*}
Summing up, 
\begin{align*}
\| H^{1/2} (a)\|_{L^{\fr{2(d+2)}{d+4}}_{t, x}} \lesssim \| \nabla (a)
\|_{L^{\fr{2(d+2)}{d+4}}_{t, x}} + \| x (a)\|_{L^{\fr{2(d+2)}{d+4}}_{t, x}} = o_J(1).
\end{align*}
We now estimate (b), restricting temporarily to dimensions $d \ge 4$. When $d 
\ge 6$, write
\[
\begin{split}
(b) &= F(u^J_n - e^{-it H} w^J_n) - F(u^J_n) \\
&= ( |u^J_n - e^{-it H} w^J_n 
|^{\fr{4}{d-2}} - |u^J_n|
^{\fr{4}{d-2}} ) \sum_{j=1}^J   v^j_n - (e^{-itH} w^J_n) |u^J_n|^{\fr{4}{d-2}}\\
&= O(|e^{-it H} w^J_n|^{\fr{4}{d-2}}) \sum_{j=1}^J v^j_n - (e^{-itH} w^J_n) | 
u^J_n |^{\fr{4}{d-2}},
\end{split}
\]
and apply H\"{o}lder's inequality:
\begin{equation}
\label{eqn:palais_smale_stability_iii_eq1}
\begin{split}
\| |x| (b) \|_{L^{\fr{2(d+2)}{d+4}}_{t,x}} &\lesssim \| e^{-itH} w_n^J 
\|_{L^{\fr{2(d+2)}{d-2}}_{t,x}}
^{\fr{4}{d-2}} \| \sum_{j=1}^J |x| v^j_n \|_{L^{\fr{2(d+2)}{d}}_{t,x}}
\\
&+ \| 
|x| 
u^J_n\|_{L^{\fr{2(d+2)}{d}}
_{t,x}}^{\fr{4}{d-2}} \| |x| e^{-itH} 
w^J_n\|_{L^{\fr{2(d+2)}{d}}_{t,x}}^{\fr{d-6}{d-2}} \| e^{-itH} w^J_n\|
_{L^{\fr{2(d+2)}{d-2}}_{t,x}}^{\fr{4}{d-2}}
\end{split}
\end{equation}
When $d = 4, 5$, 
\begin{align*}
(b) = (e^{-itH} w^J_n) O( |u^J_n|^{ \fr{6-d}{d-2}} + |u^J_n- e^{-itH} 
w^J_n|^{\fr{6-d}{d-2}}) 
\sum_{j=1}^J v^j_n - (e^{-itH} w^J_n) |u^J_n|^{\fr{4}{d-2}},
\end{align*}
thus
\[
\begin{split}
&\| |x|(b) \|_{L^{\fr{2(d+2)}{d+4}}_{t,x}} \\
&\lesssim \| e^{-itH} 
w^J_n\|_{L^{\fr{2(d+2)}{d-2}}_{t,x}} \| |x| 
\sum_{j=1}^J v^j_n \|_{L^{\fr{2(d+2)}{d}}_{t,x}} ( \| 
u^J_n\|_{L^{\fr{2(d+2)}{d-2}}_{t,x}}^{\fr{6-d}{d-2}} 
+ \| e^{-itH} w^J_n\|_{L^{\fr{2(d+2)}{d-2}}_{t,x}}^{\fr{6-d}{d-2}})\\
&+ \| e^{-itH} w^J_n \|_{L^{\fr{2(d+2)}{d-2}}_{t,x}} \| xu^J_n
\|_{L^{\fr{2(d+2)}{d}}_{t,x}} \| u_n^J\|_{L^{\fr{2(d+2)}{d-2}}_{t,x}}^{\fr{6-d}{d-2}}.
\end{split}
\]
Using \eqref{eqn:palais_smale_stability_property_ii_enhanced}, Strichartz,
 and the decay property \eqref{eqn:palais_smale_lpd_remainder_decay},
we get
\[
\lim_{J \to J*} \limsup_{n \to \infty} \| |x| (b) 
\|_{L^{\fr{2(d+2)}{d+4}}_{t,x}}= 0.
\]

It remains to bound $\nabla (b)$. By the chain rule,
\begin{align*}
\nabla(b) &\lesssim |  e^{-itH} w^J_n|^{\fr{4}{d-2}} | \Bigl | \sum_{j=1}^J  
\nabla v^j_n \Bigr |  +| 
u_n^J|^{\fr{4}{d-2}} |\nabla e^{-itH} w^J_n|\\
&= (b') + (b'').
\end{align*}
The first term $(b')$ can be handled in the manner of
\eqref{eqn:palais_smale_stability_iii_eq1} above. We now concern
ourselves with (b''). Fix a small parameter $\eta > 0$, and use the
above remark to obtain $J' = J'(\eta) \le J$ such that
\begin{align*}
\| \sum_{j = J'}^J v^j_n \|_{\dot{X}^1} \le \eta.
\end{align*}
By the subadditivity of $z \mapsto |z|^{\fr{4}{d-2}}$ (which is true up to a 
constant when $d = 4, 5$) and H\"{o}lder,
\begin{align*}
&\| (b'')\|_{L^{\fr{2(d+2)}{d+4}}_{t,x}} = \| | \sum_{j=1}^J v^j_n + e^{-itH} 
w^J_n|^{\fr{4}{d-2}} |\nabla 
e^{-itH} w^J_n| \|_{L^{\fr{2(d+2)}{d+4}}_{t,x}} \\
&\lesssim \| e^{-itH} w^J_n \|_{L^{\fr{2(d+2)}{d-2}}_{t,x}}^{\fr{4}{d-2}} \| 
H^{1/2} e^{-itH} w^J_n \|
_{L^{\fr{2(d+2)}{d}}_{t,x}} + \| \sum_{j = J'}^{J} v^j_n 
\|_{L^{\fr{2(d+2)}{d-2}}_{t,x}}^{\fr{4}{d-2}} \| 
H^{1/2} e^{-itH} w^J_n \|_{L^{\fr{2(d+2)}{d}}_{t,x}} \\
&+ C_{J'}\sum_{j=1}^{J'-1} \| \nabla e^{-itH} 
w^J_n\|_{L^{\fr{2(d+2)}{d}}_{t,x}}^{\fr{d-6}{d-2}} \| |v^j_n| |
\nabla e^{-itH} w^J_n \|_{L^{\fr{d+2}{d-1}}_{t,x}}^{\fr{4}{d-2}}.
\end{align*}
By Strichartz and the decay of $e^{-itH} w^J_n$ in $L^{\fr{2(d+2)}{d-2}}_{t, 
x}$, the first term goes 
to $0$ as 
$J 
\to \infty, \ n \to \infty$. By Strichartz and the definition of $J'$, the 
second term is bounded by
\begin{align*}
\eta^{\fr{4}{d-2}} \| w^J_n \|_{\Sigma}
\end{align*}
which can be made arbitrarily small since $\limsup_{n \to \infty} \| w^J_n 
\|_{\Sigma}$ is bounded 
uniformly in $J$. To finish, we show that for each fixed $j$
\begin{align}
\lim_{J \to J^*} \limsup_{n \to \infty} \| |v^j_n|  \nabla e^{-itH} 
w^J_n\|_{L^{\fr{d+2}{d-1}}_{t,x}} = 0. 
\label{eqn:palais-smale_remainder_derivative_estimate}
\end{align}
For any $\varepsilon > 0$, there exist $\psi^j \in C^\infty_c(\mf{R} \times 
\mf{R}^d)$ such that if
\[
c_n^j = e^{-\fr{i(t-t_n^j) |x_n^j|^2}{2}}
\]
then
\begin{align*}
\limsup_{n\to \infty} \| v^j_n - c_n^j \tilde{G}^j_n \psi^j \|_{L^{\fr{2(d+2)}{d-2}}_{t, x}(
  [-\fr{1}{2}, \fr{1}{2}] )} < 
\varepsilon,
\end{align*}
Note that $\tilde{G}^j_n \psi^j$ is supported on the set 
\[
\{ |t - t^j_n| \lesssim (N_n^j)^{-2}, \ |x-x^j_n| \lesssim 
(N_n^j)^{-1}\}.
\]
Thus for all $n$ sufficiently large,
\begin{align*}
&\| v^j_n \nabla e^{-itH} w^J_n\|_{L^{\fr{d+2}{d-1}}_{t,x}} \\
&\le \| v^j_n - c_n^j\tilde{G}^j_n \psi^j\|
_{L^{\fr{2(d+2)}{d-2}}_{t, x}} \| \nabla e^{-itH} w^J_n 
\|_{L^{\fr{2(d+2)}{d}}_{t,x} } + \| \tilde{G}^j_n \psi^j 
\nabla e^{-itH} 
w^J_n \|_{L^{\fr{d+2}{d-1}}_{t,x}}\\
&\lesssim_{E_c} \varepsilon + \| (\tilde{G}^j_n \psi^j ) \nabla e^{-itH} w_n^J\|_{L^{ 
\fr{d+2}{d-1}}_{t,x}}.
\end{align*}
By H\"{o}lder, noting that $\fr{d+2}{d-1} \le 2$ whenever $d \ge 4$,
\begin{align*}
 \| (\tilde{G}^j_n \psi^j ) \nabla e^{-itH} w_n^J\|_{L^{\fr{d+2}{d-1}}_{t,x}} 
&\lesssim_{\varepsilon} 
(N_n^j)^{\fr{d-2}{2}} \| \nabla e^{-itH} w_n^J \|_{L^{\fr{d+2}{d-1}}_{t,x} ( |t - 
t^j_n| \lesssim 
(N_n^j)^{-2}, |x-x^j_n| \lesssim (N_n^j)^{-1})}\\
&\lesssim N_n^j \| \nabla e^{-itH} w_n^J \|_{L^2_{t,x}( |t-t_n^j| \lesssim 
(N_n^j)^{-2}, |x-x_n^j| 
\lesssim (N_n^j)^{-1})} 
\end{align*}
Since $(N_n^j)^{-2} |x^j_n| = O( (N_n^j)^{-1})$, Corollary~\ref{cor:local_smoothing_cor} implies 
\begin{align*}
\| v^j_n \nabla e^{-itH} w^J_n\|_{L^{\fr{d+2}{d-1}}_{t,x}} \lesssim \varepsilon 
+ C_{\varepsilon, E_c} 
\| e^{-itH} w^J_n \|_{L^{\fr{2(d+2)}{d-2}}_{t,x}}^{\fr{1}{3}}.
\end{align*}
Sending $n \to \infty$, then $J \to J^*$, then $\varepsilon \to 0$ establishes 
\eqref{eqn:palais-smale_remainder_derivative_estimate}, and with it, Property 
(iii).

When $d = 3$, we estimate $(b)$ instead with the $L^{ \fr{5}{3}}_t L^{ 
\fr{30}{23}}_x$ dual 
Strichartz norm. Write
\begin{align*}
(b) &= (e^{-it H} w^J_n) v^j_n O( |u^J_n|^3 +  | u^J_n - e^{-itH} w^J_n|^3) 
\sum_{j=1}^J v^j_n - (e^{- itH} w^J_n) | u^J_n |^4,
\end{align*}
and apply H\"{o}lder's inequality:
\begin{equation}
\label{eqn:palais_smale_stability_iii_eq2}
\begin{split}
\| |x| (b) \|_{L^{\fr{5}{3}}_t L^{\fr{30}{23}}_x } &\lesssim  \| e^{-it H} 
w^J_n \|_{L^{10}_{t, x} } \| 
u^J_n \|_{L^{10}_{t, x}}^3 \| H^{1/2} u^J_n \|_{L^5_t L^{\fr{30}{11}}_x} \\
&+ \| e^{-itH} w^J_n \|_{L^{10}_{t, 
x}} ( \| u_n^J \|_{L^{10}_{t, x}}
^3 + \| e^{-itH} w^J_n \|_{L^{10}_{t, x}}^3)  \| 
H^{\fr{1}{2}} \sum_{j=1}^J  v^j_n \|_{L^{5}_t L^{\fr{30}{11}}_x}.
\end{split}
\end{equation}
Using \eqref{eqn:palais_smale_lpd_remainder_decay} and
\eqref{eqn:palais_smale_stability_property_ii_enhanced}, 
we have
\begin{align*}
\lim_{J \to J*} \limsup_{n \to \infty} \| |x| (b) \|_{L^{\fr{5}{3}}_t 
L^{\fr{30}{11}}_x } = 0.
\end{align*}
It remains to bound $\nabla (b)$. By the chain rule,
\begin{align*}
\nabla(b) &= O\left( (| u^J_n - e^{-itH} w^J_n|^4 - |u^J_n|^4) \nabla 
\sum_{j=1}^J  v^j_n \right) +| 
u_n^J|^4 |\nabla e^{-itH} w^J_n|\\
&= (b') + (b'').
\end{align*}
The first term $(b')$ can be treated in the manner of $\| |x| (b) 
\|_{L^{\fr{5}{3}}_t L^{\fr{30}{23}}_x}$ 
above. We now concern ourselves with $(b'')$. Fix a small parameter $\eta > 0$, 
and use the 
above 
remark to obtain $J' = J'(\eta) \le J$ such that 
\begin{align*}
\| \sum_{j = J'}^J v^j_n \|_{\dot{X}^1} \le \eta.
\end{align*}
Thus by the triangle inequality and H\"{o}lder,
\begin{align*}
\| (b'')\|_{L^{\fr{5}{3}}_t L^{\fr{30}{23}}_x} &= \| | \sum_{j=1}^J v^j_n + 
e^{-itH} w^J_n |^4 (e^{-itH} 
w^J_n) \|_{L^{\fr{5}{3}}_t L^{\fr{30}{23}}_x}\\
&\lesssim \| e^{-itH} w^J_n\|_{L^{10}_{t, x}}^4 \| H^{\fr{1}{2}} e^{-itH} 
w^J_n\|_{L^{5}_t L^{\fr{30}
{11}}_x} \\
&+  \| |\sum_{j=J'}^J v^j_n |^4 |\nabla e^{-itH} w^J_n | \|_{L^{\fr{5}{3}}_t 
L^{\fr{30}{23}}_x}  + C_{J'}
\sum_{j=1}^{J'} \|  |v^j_n|^4 \nabla e^{-itH} w^J_n \|_{L^{\fr{5}{3}}_t 
L^{\fr{30}{23}}_x}\\
&\lesssim  \| e^{-itH} w^J_n\|_{L^{10}_{t, x}}^4 \| H^{\fr{1}{2}} e^{-itH} 
w^J_n\|_{L^{5}_t L^{\fr{30}
{11}}_x} \\
&+  \| \sum_{j=J'}^J v^j_n \|_{\dot{X}^1}^4 \| |\nabla e^{-itH} w^J_n | \|_{L^5_t 
L^{\fr{30}{11}}_x}  + 
C_{J'} \sum_{j=1}^{J'} \|  |v^j_n|^4 \nabla e^{-itH} w^J_n \|_{L^{\fr{5}{3}}_t 
L^{\fr{30}{23}}_x}
\end{align*}
By Strichartz and the decay of $e^{-itH} w^J_n$ in $L^{10}_{t, x}$, the first 
term goes to $0$ as 
$J 
\to \infty, \ n \to \infty$. By Strichartz and the definition of $J'$, the 
second term is bounded by
\begin{align*}
\eta^4 \| w^J_n \|_{\Sigma}
\end{align*}
which can be made arbitrarily small since $\limsup_{n \to \infty} \| w^J_n 
\|_{\Sigma}$ is bounded 
uniformly in $J$. To finish, we show that for each fixed $j$
\begin{align*}
\lim_{J \to J^*} \limsup_{n \to \infty} \| |v^j_n|^4  \nabla e^{-itH} 
w^J_n\|_{L^{\fr{5}{3}}_t L^{\fr{30}
{11}}_x} = 0.
\end{align*}
By H\"{o}lder,
\begin{align*}
\| |v^j_n|^4 \nabla e^{-itH} w^J_n \|_{L^{\fr{5}{3}}_t L^{\fr{30}{23}}_x} \le \| 
v^j_n\|_{L^{10}_{t, x}}^3 
\| v^j_n \nabla e^{-itH} w^J_n \|_{L^{\fr{10}{3}}_t L^{ \fr{15}{7}}_x},
\end{align*}
so by \eqref{eqn:palais-smale_nonlinear_profile_scattering_size} it suffices to 
show
\begin{align}
\lim_{J \to J^*} \limsup_{n \to \infty} \| v^j_n \nabla e^{-itH} w^J_n 
\|_{L^{\fr{10}{3}}_t L^{\fr{15}{7}}
_x} 
= 0. \label{eqn:palais-smale_remainder_derivative_estimate_3d}
\end{align}
For any $\varepsilon > 0$, there exists $\psi^j \in C^\infty_c(\mf{R} \times 
\mf{R}^3)$ and functions $c_n^j(t), \ |c_n^j| \equiv 1$ such that
\begin{align*}
\limsup_{n \to \infty} \| v^j_n - c_n^j \tilde{G}^j_n \psi^j \|_{L^{10}_{t, x}( [-\fr{1}{2}, \fr{1}{2}] )} < \varepsilon,
\end{align*}
Note that $\tilde{G}^j_n \psi^j$ is supported on the set 
\[
\{ |t - t^j_n| \lesssim (N_n^j)^{-2}, \ |x-x^j_n| \lesssim 
(N_n^j)^{-1}\}.
\]
Thus for all $n$ sufficiently large,
\begin{align*}
&\| v^j_n \nabla e^{-itH} w^J_n\|_{L^{\fr{10}{3}}_t L^{\fr{15}{7}}_x}
\\
&\le \| 
v^j_n - c_n^j \tilde{G}^j_n \psi^j\|
_{L^{10}_{t, x}} \| \nabla e^{-itH} w^J_n \|_{L^5_t L^{\fr{30}{11}}_x} + \| 
\tilde{G}^j_n \psi^j \nabla e^{-itH} 
w^J_n \|_{L^{\fr{10}{3}}_t L^{\fr{15}{7}}_x}\\
&\lesssim_{E_c} \varepsilon + \| (\tilde{G}^j_n \psi^j ) \nabla e^{-itH} w_n^J\|_{L^{ 
\fr{10}{3}}_t L^{\fr{15}
{7}}
_x}.
\end{align*}
From the definition of the operators $\tilde{G}_n^j$, we have
\begin{align*}
 \| (\tilde{G}^j_n \psi^j ) \nabla e^{-itH} w_n^J\|_{L^{ \fr{10}{3}}_t L^{\fr{15}{7}}_x} 
&\lesssim_{\varepsilon} 
N_n^{\fr{1}{2}} \| \nabla e^{-itH} w_n^J \|_{L^{\fr{10}{3}}_t L^{\fr{15}{7}}_x ( 
|t - t^j_n| \lesssim 
(N_n^j)^{-2}, |x-x^j_n| \lesssim (N_n^j)^{-1})}. 
\end{align*}
Since $(N_n^j)^{-2} |x^j_n| = O( (N_n^j)^{-1})$, Corollary~\ref{cor:local_smoothing_cor} implies 
\begin{align*}
\| v^j_n \nabla e^{-itH} w_n^J\|_{L^{ \fr{10}{3}}_t L^{\fr{15}{7}}_x} &\lesssim 
\varepsilon + C_
\varepsilon \| e^{-itH} w^J_n\|_{L^{10}_{t, x}}^{ \fr{1}{9}} \| w^J_n 
\|_{\Sigma}^{\fr{8}{9}}.
\end{align*}
Sending $n \to \infty$, then $J \to J^*$, then $\varepsilon \to 0$ establishes 
\eqref{eqn:palais-smale_remainder_derivative_estimate_3d}, and with it, Property 
(iii). This completes the treatment of the case $d = 3$.

By perturbation theory, $\limsup_{n \to \infty} S_{(-T, T)} \le C(E_c) < 
\infty$, contrary to the 
Palais-Smale hypothesis. This rules out Case 2 and completes the proof of 
Proposition~\ref{prop:palais-smale}.
\end{proof}



Armed with Proposition~\ref{prop:palais-smale}, we can finish the 
proof of 
Theorem~\ref{thm:main_theorem}.
\begin{proof}[Proof of Theorem~\ref{thm:main_theorem}]
Suppose the theorem failed. In the defocusing case, 
there exist $E_c \in (0, \infty)$
and a sequence of 
solutions  $u_n$ with
$E(u_n) \to E_c$ and $S_{(-\fr{1}{n}, 0]}(u_n) \to \infty$ and 
$S_{[0, \fr{1}{n})}(u_n) \to \infty$. The same is true in the focusing 
case except $E_c$ is restricted to the interval $(0, E_{\Delta}(W))$
and $\limsup_{n} \|u_n(0) \|_{\dot{H}^1} < \| W\|_{\dot{H}^1}$. By Proposition 
\ref{prop:palais-smale}, after passing to a subsequence 
$u_n(0)$ converges in $\Sigma$ to some $\phi$. Let
$u_\infty$ be the maximal solution to 
\eqref{eqn:nls_with_quadratic_potential}
with $u_\infty(0) = \phi$. By the stability theory and dominated 
convergence, 
$\lim_{n \to \infty} S_{[-\fr{1}{n}, \fr{1}{n}]} (u_n) = 
\lim_{n \to \infty} S_{[-\fr{1}{n}, \fr{1}{n}]}(u_\infty) = 0$, 
which is a contradiction.
\end{proof}

\section{Proof of Theorem \ref{thm:focusing_blowup}}
\label{section:focusing_blowup}
We begin by recalling some facts about the \emph{ground state}
\[
W(x) = ( 1 + \tfr{|x|^2}{d(d-2)} )^{-\fr{d-2}{2}} \in \dot{H}^1(\mf{R}^d)
\]
This function satisfies the elliptic PDE
\[
\tfr{1}{2}\Delta W + W^{\fr{4}{d-2}} W = 0.
\]
It is well-known (c.f. Aubin \cite{aubin} and Talenti \cite{talenti}) that 
the functions witnessing the sharp constant in the Sobolev inequality
\[
\| f\|_{L^{\fr{2d}{d-2}}(\mf{R}^d)} \le C_d \| \nabla f\|_{L^2(\mf{R}^d)},
\]
are precisely those of the form 
$f(x) = \alpha W(\beta (x-x_0) ), \ \alpha \in \mf{C}, \ \beta > 0, \
x_0 \in \mf{R}^d$. 

For the reader's convenience, we reiterate the definitions of the
energy associated to the focusing energy-critical NLS with and without potential:
\begin{align*}
E_{\Delta}(u) &= \int_{\mf{R}^d} \tfr{1}{2} |\nabla u|^2 -
(1 - \tfr{2}{d}) |u|^{\fr{2d}{d-2}} \, dx,\\
E(u) &= E_{\Delta}(u) + \tfr{1}{2} \|xu\|_{L^2}^2.
\end{align*}

\begin{lma}[Energy trapping {\cite{kenig-merle_focusing_nls}}]
\label{lma:appendix_lma_1}
Suppose  $E_{\Delta}(u) \le (1-\delta_0) E_{\Delta}(W)$ .

\begin{itemize}
\item If $\| \nabla u\|_{L^2} \le \| \nabla W\|_{L^2}$, then there exists $\delta_1 > 0$ 
depending on $\delta_0$ such that 
\[
\| \nabla u\|_{L^2} \le (1-\delta_1) \| W\|_{L^2},
\]
and $E_{\Delta}(u) \ge 0$.

\item If $\| \nabla u\|_{L^2} \ge \| \nabla W\|_{L^2}$ then there exists $\delta_2 > 0$ depending on $\delta_0$ 
such that 
\[
\| \nabla u\|_{L^2} \ge (1+\delta_2) \| \nabla W\|_{L^2},
\]
and $\tfr{1}{2} \| \nabla u\|_{L^2}^2 - \| u\|_{L^{\fr{2d}{d-2}}}^{\fr{2d}{d-2}} \le -\delta_0 E_{\Delta}(W)$.
\end{itemize}
\end{lma}



Now suppose $E(u) < E_{\Delta}(W)$ and $\| \nabla u \|_{L^2} \le \|
\nabla W\|_{L^2}$. 
The energy inequality can be written as 
\[
\|u\|_{\Sigma}^2 + (1-\tfr{2}{d}) ( \|
W\|_{L^{\fr{2d}{d-2}}}^{\fr{2d}{d-2}} - \|
u\|_{L^{\fr{2d}{d-2}}}^{\fr{2d}{d-2}} ) \le \| \nabla W\|_{L^2}^2.
\]
By the variational characterization of $W$, the difference of norms on
the left side is nonnegative; therefore
\[
\| u\|_{\Sigma} \le \| \nabla W\|_{L^2}.
\]
Combining the above with conservation of energy and a
continuity argument, we obtain
\begin{cor}
\label{cor:appendix_cor_1}
Suppose $u: I \times \mf{R}^d \to \mf{C}$ is a solution to the focusing equation~\eqref{eqn:nls_with_quadratic_potential} with $E(u) \le (1-\delta_0) E_{\Delta}(W)$. Then 
there exist $\delta_1, \delta_2 > 0$, depending on $\delta_0$, such that

\begin{itemize}
\item If $\| u(0)\|_{\dot{H}^1} \le \| W\|_{\dot{H}^1}$, then 
\[\sup_{t \in I} \| u(t) \|_{\Sigma} \le (1-\delta_1) \|
W\|_{\dot{H}^1} \quad \text{and} \quad E(u) \ge 0.
\]

\item If $\|u(0)\|_{\dot{H}^1} \ge \|W\|_{\dot{H}^1}$, then
\[
\inf_{t \in I} \| u(t) \|_{\Sigma} \ge (1+\delta_2) \|
W\|_{\dot{H}^1} \quad \text{and} \quad \tfr{1}{2}\| \nabla u\|_2^2 -
\| u \|_{L^{\fr{2d}{d-2}}}^{\fr{2d}{d-2}} \le -\delta_0 E_{\Delta}(W).
\]
\end{itemize}

\end{cor}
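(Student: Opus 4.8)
The plan is to promote the \emph{static} energy-trapping dichotomy of Lemma~\ref{lma:appendix_lma_1} to a statement holding uniformly along the flow, using conservation of energy together with a continuity argument, and then to trade the resulting $\dot H^1$ information for $\Sigma$ bounds using that the potential part of $E$ is nonnegative.

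First I would note that $\tfr12\|xu(t)\|_{L^2}^2 \ge 0$, so $E_{\Delta}(u(t)) \le E(u(t))$, and by conservation of energy $E(u(t)) = E(u(0)) \le (1-\delta_0)E_{\Delta}(W)$ for every $t \in I$. Hence Lemma~\ref{lma:appendix_lma_1} applies at each time with the fixed parameter $\delta_0$, producing $\delta_1,\delta_2>0$ (functions of $\delta_0$ alone) such that $\|\nabla u(t)\|_{L^2}$ never takes a value in the open interval $\bigl((1-\delta_1)\|\nabla W\|_{L^2},\,(1+\delta_2)\|\nabla W\|_{L^2}\bigr)$: a value strictly below $\|\nabla W\|_{L^2}$ is forced down to $\le(1-\delta_1)\|\nabla W\|_{L^2}$, a value strictly above is forced up to $\ge(1+\delta_2)\|\nabla W\|_{L^2}$, and the value $\|\nabla W\|_{L^2}$ itself is impossible since $\delta_1,\delta_2>0$. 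Because $u \in C^0_t(\Sigma)$, the map $t \mapsto \|\nabla u(t)\|_{L^2}$ is continuous and $I$ is connected; its image is therefore a connected subset of $[0,\infty)$ avoiding that interval, hence contained either in $[0,(1-\delta_1)\|\nabla W\|_{L^2}]$ or in $[(1+\delta_2)\|\nabla W\|_{L^2},\infty)$ according to the value at $t=0$.

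In the subcritical case $\|u(0)\|_{\dot H^1} \le \|W\|_{\dot H^1}$ this gives $\|\nabla u(t)\|_{L^2} \le \|\nabla W\|_{L^2}$ for all $t$. The sharp Sobolev inequality together with the identities $\|W\|_{L^{2d/(d-2)}}^{2d/(d-2)} = \tfr12\|\nabla W\|_{L^2}^2$ and $E_{\Delta}(W) = \tfr1d\|\nabla W\|_{L^2}^2$ (both from the Euler--Lagrange equation for $W$) then yield, after a short computation, $\|u(t)\|_{L^{2d/(d-2)}}^{2d/(d-2)} \le \tfr12\|\nabla u(t)\|_{L^2}^2$ and hence $E_{\Delta}(u(t)) \ge \tfr1d\|\nabla u(t)\|_{L^2}^2 \ge 0$. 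Adding $\tfr12\|xu(t)\|_{L^2}^2 \ge \tfr1d\|xu(t)\|_{L^2}^2$ and using conservation, $\tfr1d\|u(t)\|_\Sigma^2 \le E(u(0)) \le (1-\delta_0)\tfr1d\|\nabla W\|_{L^2}^2$, so $\sup_{t\in I}\|u(t)\|_\Sigma \le \sqrt{1-\delta_0}\,\|W\|_{\dot H^1}$ and one may take (relabelling) $\delta_1 = 1-\sqrt{1-\delta_0}$; the bound $E(u)\ge 0$ is immediate since both terms of $E$ are nonnegative here. In the supercritical case $\|u(0)\|_{\dot H^1}\ge\|W\|_{\dot H^1}$, the dynamic dichotomy gives $\|u(t)\|_\Sigma \ge \|\nabla u(t)\|_{L^2} \ge (1+\delta_2)\|W\|_{\dot H^1}$ for all $t\in I$, while $\tfr12\|\nabla u(t)\|_{L^2}^2 - \|u(t)\|_{L^{2d/(d-2)}}^{2d/(d-2)} \le -\delta_0 E_{\Delta}(W)$ is exactly the second bullet of Lemma~\ref{lma:appendix_lma_1}, valid because $E_{\Delta}(u(t)) \le (1-\delta_0)E_{\Delta}(W)$.

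The only step that uses the hypotheses essentially is the passage from the static to the dynamic dichotomy: it rests on the \emph{strict} gap $\delta_1,\delta_2>0$ in Lemma~\ref{lma:appendix_lma_1} (so that $\|\nabla u(\cdot)\|_{L^2}$ cannot cross $\|\nabla W\|_{L^2}$), on conservation of energy for strong solutions, and on the continuity $u \in C^0_t(\Sigma)$ built into the definition of a solution. I do not expect a genuine obstacle here; the rest is bookkeeping with the sharp Sobolev constant, already carried out (in non-quantitative form) in the paragraph preceding the corollary.
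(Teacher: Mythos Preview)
Your proposal is correct and follows essentially the same approach as the paper: reduce to Lemma~\ref{lma:appendix_lma_1} via $E_\Delta(u(t))\le E(u(t))$ and energy conservation, use continuity of $t\mapsto\|\nabla u(t)\|_{L^2}$ to trap on one side of the gap, and then upgrade the $\dot H^1$ control to a $\Sigma$ bound. Your quantitative computation yielding $\|u(t)\|_\Sigma\le\sqrt{1-\delta_0}\,\|W\|_{\dot H^1}$ is a more explicit version of the paper's brief argument in the paragraph preceding the corollary, but the substance is the same.
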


\begin{proof}[Proof of Theorem~\ref{thm:focusing_blowup}]
Let $u$ be the 
maximal solution to \eqref{eqn:nls_with_quadratic_potential} with 
\[
u(0) = u_0, \quad E(u_0) < E_{\Delta}(W), \quad \| \nabla u_0 \|_2 \ge \| \nabla
W\|_2.
\]  
Let $f(t) = \int_{\mf{R}^d} |x|^2 |u(t, x)|^2 \, dx$. 
It can be shown \cite{cazenave} that $f$ is $C^2$ on the interval of existence and
\[
f''(t) = \int | \nabla u (t,x)|^2 - 2|u(t,x)|^{\fr{2d}{d-2}} - \tfr{1}{2}|x|^2 |u(t,x)|^2 \, dx.
\]
By the corollary, $f''$ is bounded above by some fixed $C < 0$. Therefore 
\[
f(t) \le A + Bt + \tfr{C}{2} t^2
\]
for some constants $A$ and $B$. It follows
that $u$ has a finite lifespan in both time directions. 
\end{proof}

\section{Bounded linear potentials}
\label{section:bounded_linear_potentials}
In this section we show, using a perturbative argument, that 
\begin{equation}
\label{eqn:nls_with_bounded_linear_potential}
i\partial_t u = (-\tfr{1}{2}\Delta + V)u + |u|^{\fr{4}{d-2}}u, \quad u(0) = u_0 \in H^1(\mf{R}^d)
\end{equation}
is globally wellposed whenever $V$ is a real-valued function with
\[
V_{max} := \|V\|_{L^\infty} + \| \nabla V\|_{L^\infty} < \infty.
\]
 This
equation defines the Hamiltonian flow of the energy functional
\begin{equation}
E(u(t)) = \int_{\mf{R}^d} \tfr{1}{2} |\nabla u(t,x)|^2 + V |u(t,x)|^2 + \tfr{d-2}{d}
|u|^{\fr{2d}{d-2}} \, dx = E(u(0)).
\end{equation}
Solutions to \eqref{eqn:nls_with_bounded_linear_potential} also conserve \emph{mass}:
\[
M(u(t)) = \int_{\mf{R}^d} |u(t,x)|^2 \, dx = M(u(0)).
\]
It will be convenient to assume $V$ is positive and
bounded away from $0$. This hypothesis allows us to bound the $H^1$
norm of $u$ purely in terms of $E$ instead of both $E$ and $M$, and
causes no loss of generality because for sign-indefinite $V$ we could simply
consider the conserved quantity $E + CM$ in place of $E$, where $C$ is
some positive constant. 

\begin{thm}
\label{thm:bounded_linear_potential_gwp}
For any $u_0 \in H^1(\mf{R}^d)$, \eqref{eqn:nls_with_bounded_linear_potential} has a unique global solution $u \in C^0_{t, loc} H^1_x
(\mf{R} \times \mf{R}^d)$. Further, $u$ obeys the spacetime bounds
\[
S_{I}(u) \le C(\|u_0\|_{H^1}, |I|)
\]
for any compact interval $I \subset \mf{R}$.
\end{thm}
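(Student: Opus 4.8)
The plan is to treat equation \eqref{eqn:nls_with_bounded_linear_potential} as a perturbation of the scale-invariant, potential-free equation \eqref{eqn:ckstt_eqn}, exactly as suggested in the introduction, and to deduce global well-posedness and spacetime bounds from Theorem~\ref{thm:ckstt} via the stability theory. First I would record the local theory for \eqref{eqn:nls_with_bounded_linear_potential}: since $V$ is bounded with bounded gradient, the propagator $e^{-it(-\frac12\Delta+V)}$ obeys the same Strichartz estimates as $e^{it\Delta/2}$ on any compact time interval (one can even write $e^{-itH_V}=e^{it\Delta/2}+$ lower order via Duhamel, or simply invoke the abstract Keel--Tao machinery together with the fact that $V$ is a bounded perturbation that does not destroy dispersion on unit time scales), and the fractional product/chain rules of Section~\ref{section:local_theory} apply verbatim with $\langle\nabla\rangle$ in place of $H^{1/2}$. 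This gives local existence, uniqueness, a blowup criterion phrased in terms of the $L^{2(d+2)/(d-2)}_{t,x}$ norm, and a stability proposition analogous to Proposition~\ref{prop:stability}, now for the operator $-\frac12\Delta+V$ acting on $H^1$ data. As usual, global well-posedness is then equivalent to an a priori spacetime bound $S_I(u)\le C(\|u_0\|_{H^1},|I|)$ on every compact $I$, and since the equation is autonomous it suffices to prove such a bound on intervals of some fixed small length $L$, to be chosen depending only on the energy.

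The main step is to show that on a sufficiently short time interval the solution to \eqref{eqn:nls_with_bounded_linear_potential} is a good approximate solution to \eqref{eqn:ckstt_eqn} in the sense of the stability proposition. Concretely: fix energy $E$, let $u$ solve \eqref{eqn:nls_with_bounded_linear_potential} with $E(u)\le E$; by the positivity assumption on $V$ we control $\|u(t)\|_{H^1}^2\lesssim E$ uniformly in $t$. Let $I$ be an interval of length $L$ around an arbitrary time, and let $w$ be the global solution to \eqref{eqn:ckstt_eqn} with $w(0)=u(0)$ (restricted to the appropriate time translate), which exists globally with $S_{\mathbb R}(w)\le C(E)$ and $\|\nabla w\|_{S(\mathbb R)}\le C(E)$ by Theorem~\ref{thm:ckstt} and \cite[Lemma 3.11]{matador}. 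Then $w$ solves $(i\partial_t+\tfrac12\Delta)w = |w|^{4/(d-2)}w$, so regarded as an approximate solution to \eqref{eqn:nls_with_bounded_linear_potential} its error is $e = -Vw$, and one must verify $\|\langle\nabla\rangle(Vw)\|_{N(I)}$ is small. Using $\|V\|_{L^\infty}+\|\nabla V\|_{L^\infty}=V_{max}<\infty$, the product rule, and H\"older in time on the short interval $I$, one gets $\|\langle\nabla\rangle(Vw)\|_{N(I)}\lesssim V_{max}\,|I|^{\theta}\,\|\langle\nabla\rangle w\|_{S(I)}\lesssim_{V_{max}} L^{\theta}C(E)$ for some $\theta>0$ coming from the admissible-pair time exponents. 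Choosing $L=L(E,V_{max})$ small enough makes this error below the threshold $\varepsilon_0(E,C(E))$ of the stability proposition; the hypothesis on matching initial data is trivially satisfied since $u$ and $w$ share the same data. The stability proposition then yields $S_I(u)\le C(E)$, and since $I$ was an arbitrary interval of length $L$ inside any given compact interval, a finite subdivision gives $S_{I_0}(u)\le C(|I_0|,E)$ on every compact $I_0$, hence (by the blowup criterion) global existence.

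The part requiring the most care is establishing the short-time Strichartz and stability theory for $e^{-it(-\frac12\Delta+V)}$: one needs to know that a bounded, Lipschitz potential does not spoil the dispersive decay badly enough to break the local theory. The cleanest route is to note that on a unit time interval $e^{-it(-\frac12\Delta+V)}f = e^{it\Delta/2}f - i\int_0^t e^{i(t-s)\Delta/2}V e^{-is(-\frac12\Delta+V)}f\,ds$, so Strichartz for $e^{-it(-\frac12\Delta+V)}$ follows from Strichartz for $e^{it\Delta/2}$ plus a Gronwall argument absorbing the $V$ term (bounded on $L^2\to L^2$ and on $H^1\to H^1$), with constants depending on $V_{max}$ and $|I|$; the same Duhamel-plus-Gronwall scheme upgrades to inhomogeneous Strichartz and thence to the stability proposition along the lines of \cite{claynotes}. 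Everything else is a routine transcription of the energy-critical perturbation argument, with $H^{1/2}$ replaced by $\langle\nabla\rangle$ and with the potential term handled exactly as the error $e$ above.
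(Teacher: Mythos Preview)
Your plan is correct and follows the same perturbative idea as the paper, but with a dual execution worth noting. You build Strichartz and stability theory for the propagator $e^{-it(-\tfrac12\Delta+V)}$ and then view the potential-free solution $w$ as an approximate solution to \eqref{eqn:nls_with_bounded_linear_potential} with error $-Vw$. The paper never introduces the propagator with potential: it writes Duhamel for $u$ with respect to the \emph{free} propagator (equation \eqref{eqn:duhamel_for_bounded_linear_potential}), treats $Vu$ as part of the forcing, and views $u$ as an approximate solution to the potential-free equation \eqref{eqn:ckstt_eqn} with error $Vu$, so that the off-the-shelf stability Lemma~\ref{lma:ckstt_stability} from \cite{tao-visan_stability} applies directly. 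The price the paper pays is a short continuity argument on each subinterval (to verify the hypothesis $\|\tilde u\|_{L^{2(d+2)/(d-2)}_{t,x}}\le L$ for $\tilde u=u$, since this is not known a priori) together with an iteration propagating the bound $\|\nabla(u-w)\|_{S}\le C(E)\varepsilon^c$ from one subinterval to the next; your route avoids this iteration because $w$ already has global bounds, at the cost of having to set up stability for the perturbed equation. One small caution: your parenthetical about invoking Keel--Tao for $e^{-it(-\tfrac12\Delta+V)}$ is not on firm ground, since Keel--Tao needs a dispersive estimate that is not automatic for a general bounded $V$; the Duhamel-plus-absorption argument you also sketch is the safe path, and indeed is implicitly what the paper uses by keeping the free propagator throughout.
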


As alluded to at the beginning of this section, the proof uses the
strategy pioneered by \cite{matador} and treats the term 
$Vu$ as a perturbation to \eqref{eqn:ckstt_eqn}, which is globally
wellposed. Thus Duhamel's formula reads
\begin{equation}
u(t) = e^{\fr{it\Delta}{2}} u(t_0) - i \int_0^t
e^{\fr{i(t-s)\Delta}{2}} [ |u(s)|^{\fr{4}{d-2}} u(s) + V u(s)]
ds. \label{eqn:duhamel_for_bounded_linear_potential}
\end{equation}

We record mostly without proof some standard results in the local
theory of \eqref{eqn:nls_with_bounded_linear_potential}. Introduce the notation
\[
\begin{split}
\| u\|_{X(I)} = \| \nabla u\|_{L^{\fr{2(d+2)}{d-2}}_{t}
  L^{\fr{2d(d+2)}{d^2+4}}_x (I \times \mf{R}^d)}.
\end{split}
\]

\begin{lma}[Local wellposedness]
\label{lma:bounded_linear_potential_lwp}
Fix $u_0 \in H^1(\mf{R}^d)$, and suppose $T_0 > 0$ is such that 
\[
\| e^{\fr{it\Delta}{2}} u_0\|_{X([-T_0, T_0])} \le \eta \le \eta_0
\]
where $\eta_0 = \eta_0(d)$ is a fixed parameter. Then there exists a
positive 
\[T_1 = T_1(\|u_0\|_{H^1}, \eta, V_{max})
\]
such that \eqref{eqn:nls_with_bounded_linear_potential}
has a unique (strong) solution $u \in C^0_t H^1_x([-T_1, T_1] \times \mf{R}^d)$. Further, if $(-T_{min}, T_{max})$ is the maximal
lifespan of $u$, then $\|\nabla u\|_{S(I)} < \infty$ for every
compact interval $I \subset (-T_{min}, T_{max})$, where $\| \cdot \|_{S(I)}$ is
the Strichartz norm defined in Section~\ref{subsection:notation}. 
\end{lma}

\begin{proof}[Proof sketch]
Run the usual contraction mapping argument using the Strichartz
estimates to show that 
\[
\mcal{I}(u)(t) = e^{\fr{it\Delta}{2}} u_0 - i \int_0^t
e^{\fr{i(t-s)\Delta}{2}} [ |u(s)|^{\fr{4}{d-2}} u(s) + Vu(s)] dx
\]
has a fixed point in a suitable function space. Estimate the terms
involving  $V$ in the $L^1_t L^2_x$ dual Strichartz
norm and choose the parameter $T_1$ to make those terms sufficiently
small after using H\"{o}lder in time.
\end{proof}

\begin{lma}[Blowup criterion]
\label{lma:bounded_linear_potential_blowup_criterion}
Let $u: (T_0, T_1) \times \mf{R}^d \to \mf{C}$ be a solution to \eqref{eqn:nls_with_bounded_linear_potential} with 
\[
\|u\|_{X( (T_0, T_1))} < \infty.
\]
If $T_0 > -\infty$ or $T_1 < \infty$, then $u$ can be extended to a
solution on a larger time interval.
\end{lma}

The key result we will rely on is the stability theory for the
energy-critical NLS \eqref{eqn:ckstt_eqn}.
\begin{lma}[Stability \cite{tao-visan_stability}]
\label{lma:ckstt_stability}
Let $\tilde{u}: 
I \times \mf{R}^d \to 
\mf{C}$ be 
an approximate solution to equation \eqref{eqn:ckstt_eqn} in the 
sense that 
\[
i\partial_t \tilde{u} = -\tfr{1}{2} \Delta u \pm |\tilde{u}|^{\fr{4}{d-2}} \tilde{u} + e
\]
for some function $e$. Assume that 
\begin{equation}
\label{eqn:ckstt_stability_prop_hyp_1} 
\| \tilde{u}\|_{L^{\fr{2(d+2)}{d-2}}_{t,x}} \le L,
\quad \| \nabla u \|_{L^\infty_t L^2_x} \le E,
\end{equation}
and that for some $0 < \varepsilon < \varepsilon_0(E, L)$ one has
\begin{equation}
\label{eqn:ckstt_stability_prop_hyp_2} 
\| \tilde{u}(t_0) - u_0 \|_{\dot{H}^1} + \| \nabla e \|_{N(I)} \le \varepsilon,
\end{equation}
where $\| \cdot \|_{N(I)}$ was defined in Section~\ref{subsection:notation}. 
Then there exists a unique solution $u : I \times \mf{R}^d 
\to \mf{C}$ to \eqref{eqn:ckstt_eqn} with $u(t_0) = u_0$ 
which further satisfies 
the estimates
\begin{equation}
\| \tilde{u} -  u \|_{L^{\fr{2(d+2)}{d-2}}_{t,x} } +
\| \nabla (\tilde{u} - u) \|_{S(I)} \lesssim C(E, L) 
\varepsilon^c
\end{equation}
where $0 < c = c(d) < 1$ and $C(E, L)$ is a function which is nondecreasing
in each variable.
\end{lma}


\begin{proof}[Proof of Theorem~\ref{thm:bounded_linear_potential_gwp}]

  It suffices to show that for $T$ sufficiently small depending only
  on $E = E(u_0)$, the solution $u$ to
  \eqref{eqn:nls_with_bounded_linear_potential} on $[0, T]$ satisfies
  an \emph{a priori} estimate
\begin{equation}
\| u\|_{X([0, T])} \le C(E). \label{eqn:bounded_linear_potential_a_priori_bound}
\end{equation}
From Lemma \ref{lma:bounded_linear_potential_blowup_criterion} and energy
conservation, it will follow that $u$ is a global solution with the desired
spacetime bound.

By Theorem~\ref{thm:ckstt}, the equation
\[
(i\partial_t + \tfr{1}{2}\Delta)w = |w|^{\fr{4}{d-2}} w, \quad w(0) = u(0).
\]
has a unique global solution $w \in C^0_{t, loc}\dot{H}^1_x(\mf{R} \times
\mf{R}^d)$ with the spacetime bound
\eqref{eqn:ckstt_spacetime_bound}. 
Fix a small parameter $\eta > 0$ to be
determined shortly, and
partition $[0, \infty)$ into $ J(E, \eta)$ intervals $I_j = [t_j, t_{j+1})$ so that 
\begin{equation}
\|w\|_{X(I_j)} \le \eta. \label{eqn:bounded_linear_potential_eqn_1}
\end{equation}
For some $J' < J$, we then have
\[
[0, T] = \bigcup_{j=0}^{J'-1} ([0, T] \cap I_j).
\]
 We make two preliminary estimates. By H\"{o}lder in time,
\begin{equation}
\label{eqn:bounded_linear_potential_eqn_1.5}
\| Vu\|_{N(I_j)} + \| \nabla (Vu)\|_{N(I_j)} \lesssim C_V T \|u\|_{L^\infty_t H^1_x(I_j)}
\le \varepsilon
\end{equation}
for any $\varepsilon$ provided that $T = T(E, V, \varepsilon)$ is sufficiently
small.
Further, observe that
\begin{equation}
\| e^{\fr{i(t-t_j)\Delta}{2}} w(t_j) \|_{X(I_j)} \le
2\eta \label{eqn:bounded_linear_potential_eqn_2}
\end{equation}
for $\eta$ sufficiently small depending only on $d$. Indeed, from the
Duhamel formula
\begin{equation}
w(t) = e^{\fr{i(t-t_j)\Delta}{2}} w(t_j) - i\int_{t_j}^{t}
e^{\fr{i(t-s)\Delta}{2}} (|w|^{\fr{4}{d-2}} w) (s)ds, \label{eqn:ckstt_duhamel}
\end{equation}
Strichartz, and the chain rule, we find that
\begin{align*}
\| e^{\fr{i(t-t_j)\Delta}{2}} w(t_j)\|_{X(I_j)} &\le \| w
\|_{X(I_j)} + c_d \| \nabla (|w|^{\fr{4}{d-2}} w) \|_{L^2_t
  L^{\fr{2d}{d+2}}_x (I_j)}\\
&\le \eta + c_d \|w\|_{X(I_j)}^{\fr{d+2}{d-2}}\\
&\le \eta + c_d \eta^{\fr{d+2}{d-2}}.
\end{align*}
Taking $\eta$ sufficiently small relative to $c_d$, we obtain
 \eqref{eqn:bounded_linear_potential_eqn_2}.

Now, choosing $\varepsilon < \eta$ in
\eqref{eqn:bounded_linear_potential_eqn_1.5} (and adjusting $T$ accordingly), we use the Duhamel
formula \eqref{eqn:duhamel_for_bounded_linear_potential}, Strichartz,
H\"{o}lder, and \eqref{eqn:bounded_linear_potential_eqn_2} to obtain
\begin{align*}
\| u\|_{X (I_0)} &\le \| e^{\fr{it\Delta}{2}} u(0)
\|_{X(I_0)} + c_d \| u\|_{X(I_0)}^{\fr{d+2}{d-2}} +
C \|Vu\|_{L^1_t H^1_x(I_0)}\\
&\le 2\eta + c_d \|u\|_{X(I_0)}^{\fr{d+2}{d-2}} + C_V T
\|u\|_{L^\infty_t H^1_x(I_0)}\\
&\le 3\eta + c_d \|u \|_{X(I_0)}^{\fr{d+2}{d-2}}
\end{align*}
By a continuity
argument,
\begin{align}
 \| u\|_{X(I_0)} \le 4 \eta. \label{eqn:bounded_linear_potential_eqn_3}
\end{align}
Choosing $\varepsilon$ sufficiently small in
\eqref{eqn:bounded_linear_potential_eqn_1.5} so that the smallness
condition \eqref{eqn:ckstt_stability_prop_hyp_2} is satisfied, we
apply Lemma~\ref{lma:ckstt_stability} with $ \|u(0) -
w(0)\|_{\dot{H}^1} = 0$ to find that
\begin{equation}
\label{eqn:bounded_linear_potential_eqn_4}
\| \nabla(u - w)\|_{S(I_0)} \le C(E) \varepsilon^c
\end{equation}

On the interval $I_1$, use \eqref{eqn:bounded_linear_potential_eqn_2}, \eqref{eqn:bounded_linear_potential_eqn_4}, and the usual
estimates to obtain
\begin{align*}
\|u\|_{X(I_1)} &\le \| e^{\fr{i(t-t_1)\Delta}{2}} u(t_1)
\|_{X(I_1)} + c_d \| u\|_{X(I_1)}^{\fr{d+2}{d-2}} + C_V
T \|u\|_{L^\infty_t H^1_x(I_1)}\\
&\le C(E)\varepsilon^c + 2\eta + c\| u\|_{X(I_1)}^{\fr{d+2}{d-2}}
+ \eta,
\end{align*}
where the $C(E)$ in the last line has absorbed the Strichartz constant
$c$; this redefinition of $C(E)$ will cause no trouble because the
number of times it will occur depends only on $E$, $d$, and $V$. By
by choosing $\varepsilon$ sufficiently small relative to $\eta$ and
using continuity, we find that
\[
\| u\|_{X( I_1)} \le 4 \eta.
\]
As before, by choosing $T$ sufficiently small we obtain 
\begin{align*}
\|\nabla (Vu)\|_{\dot{N}^0(I_1)} &\le \varepsilon\\
\|e^{\fr{i(t-t_1)\Delta}{2}}[ u(t_1) - w(t_1)] \|_{X(I_1)}
&\le C(E)\varepsilon^c
\end{align*}
for any $\varepsilon \le \varepsilon_0(E, L)$. Hence by Lemma
\ref{lma:ckstt_stability} and possibly after modifying
$T$ depending on $E$, we get
\[
\|\nabla (u - w)\|_{S(I_1)} \le C(E)\varepsilon^c.
\]

We emphasize that the parameters $\eta, \varepsilon, T$ are
chosen so that each depends only on the preceding parameters and on
the fixed quantities $d, E, V$.

After iterating at most $J'$ times and summing the bounds over $0 \le
j \le J' - 1$, we conclude that for $T$
sufficiently small depending on $E$ and $V$,
\[
\|u\|_{X([0, T])} \le 4J' \eta \le C(E).
\]
This establishes the bound \eqref{eqn:bounded_linear_potential_a_priori_bound}.
\end{proof}

\bibliographystyle{plain}

\bibliography{bibliography}

\end{document}